\newfont{\msam}{msam10}
\newtheorem{theorem}[]{Theorem}
\newtheorem{proposition}[]{Proposition}
\newtheorem{corollary}[]{Corollary}
\newtheorem{lemma}[]{Lemma}
\theoremstyle{definition}
\theoremstyle{Proposition}
\newtheorem{definition}[]{Definition}
\newtheorem{remark}[]{Remark}
\def\remark{\noindent\textbf{Remark.}}
\newtheorem{prop}[theorem]{Proposition}
\let\nc\newcommand
\def\ni{\noindent}
\def\bthm{\begin{theorem}}
\def\ethm{\end{theorem}}
\def\blemma{\begin{lemma}}
\def\elemma{\end{lemma}}
\def\bproof{\begin{proof}}
\def\eproof{\end{proof}}
\def\bprop{\begin{proposition}}
\def\eprop{\end{proposition}}
\def\bcor{\begin{corollary}}
\def\ecor{\end{corollary}}
\nc{\la}{\label}
\def\O{\mathcal{O}}
\def\Z{\mathbb{Z}}
\def\N{\mathbb{N}}
\def\M{\mathbb{M}}
\def\L{\boldsymbol{L}}
\def\R{\boldsymbol{R}}
\def\Com{\mathtt{Com}}
\def\Alg{\mathtt{Alg}}
\def\DGL{\mathtt{DGLA}}
\def\Bimod{\mathtt{Bimod}}
\def\cAlg{\mathtt{Comm\,Alg}}
\def\Sets{\mathtt{Sets}}
\def\DGA{\mathtt{DGA}}
\def\cDGA{\mathtt{CDGA}}
\def\DGMod{\mathtt{DG\,Mod}}
\def\DGBimod{\mathtt{DG\,Bimod}}
\def\D{{\mathscr D}}
\def\C{{\mathscr C}}
\def\Ab{{\mathscr Ab}}
\def\rtv#1{\!\!\sqrt[V]{#1}}
\def\Ho{{\mathtt{Ho}}}
\nc{\Ob}{{\rm Ob}}
\nc{\Mor}{{\rm Mor}}
\nc{\Hom}{{\rm{Hom}}}
\nc{\HOM}{\underline{\rm{Hom}}}
\nc{\DER}{\underline{\rm{Der}}}
\nc{\END}{\underline{\rm{End}}}
\nc{\Ext}{{\rm{Ext}}}
\nc{\Rep}{{\rm{Rep}}}
\nc{\DRep}{{\rm{DRep}}}
\nc{\NCRep}{\widetilde{\rm{Rep}}}
\nc{\RAct}{{\rm{RAct}}}
\nc{\bs}{\backslash}
\nc{\cn}{ \mbox{\rm cone} }
\nc{\ev}{{\tt{ev}}}
\nc{\n}{{\natural}}
\nc{\nn}{{{\natural} {\natural}}}
\nc{\colim}{{\tt{colim}}}
\nc{\B}{{\mathrm{B}}}
\nc{\Ba}{\overline{\mathrm{B}}}
\nc{\Ta}{\overline{\mathrm{T}}}
\nc{\bC}{\overline{C}}
\nc{\EXT}{\underline{\rm{Ext}}}
\nc{\TOR}{\underline{\rm{Tor}}}
\def\H{\mathrm H}
\def\tH{\mathrm{HC}}
\def\HC{\mathrm{HC}}
\def\rHC{\overline{\mathrm{HC}}}
\def\rHH{\overline{\mathrm{HH}}}
\def\CC{\mathrm{CC}}
\def\F{\mathcal F}
\def\G{\mathcal G}
\def\T{\mathrm T}
\def\ch{{\rm ch}}
\nc{\End}{{\rm{End}}}
\nc{\GL}{{\rm{GL}}}
\nc{\gl}{{\mathfrak{gl}}}
\nc{\m}{{\mathfrak{m}}}
\nc{\g}{{\mathfrak{g}}}
\nc{\PGL}{{\rm{PGL}}}
\nc{\SL}{{\rm{SL}}}
\nc{\PSL}{{\rm{PSL}}}
\nc{\ad}{{\rm{ad}}}
\nc{\Ad}{{\rm{Ad}}}
\nc{\dlim}{\varinjlim}
\nc{\plim}{\varprojlim}
\def\eR{R^{\mbox{\scriptsize{\rm{e}}}}}
\newcommand{\HH}{{\rm{HH}}}
\newcommand{\Tor}{{\rm{Tor}}}
\newcommand{\Spec}{{\rm{Spec}}}
\newcommand{\Sym}{{\rm{Sym}}}
\newcommand{\bSym}{\boldsymbol{\Lambda}}
\newcommand{\bL}{\boldsymbol{\Lambda}}
\newcommand{\id}{{\rm{Id}}}
\newcommand{\Der}{{\rm{Der}}}
\newcommand{\Tr}{{\rm{Tr}}}
\newcommand{\tTr}{\widetilde{\rm{Tr}}}
\newcommand{\tr}{{\rm{tr}}}
\newcommand{\Ker}{{\rm{Ker}}}
\newcommand{\Coker}{{\rm{Cok}}}
\newcommand{\im}{{\rm{Im}}}
\newcommand{\into}{\,\hookrightarrow\,}
\newcommand{\onto}{\,\twoheadrightarrow\,}
\newcommand{\sonto}{\,\stackrel{\sim}{\twoheadrightarrow}\,}
\newcommand{\rar}{\rightarrow}
\newcommand{\FT}{\mathcal{C}}
\newcommand{\Cyl}{\mathtt{Cyl}}
\newcommand{\Path}{\mathtt{Path}}
\newcommand{\sh}{\mathrm{Sh}}
\newenvironment{lyxlist}[1]
{\begin{list}{}
{\settowidth{\labelwidth}{#1}
 \setlength{\leftmargin}{\labelwidth}
 \addtolength{\leftmargin}{\labelsep}
 }}
{\end{list}}
\begin{document}

\title[Derived Representation Schemes]{Derived Representation Schemes and Noncommutative Geometry}
\author{Yuri Berest}
\address{Department of Mathematics,
 Cornell University, Ithaca, NY 14853-4201, USA}
\email{berest@math.cornell.edu}
\thanks{The work of Yu.~B. was partially supported by NSF Grant DMS 09-01570}
\author{Giovanni Felder}
\address{Departement Mathematik,
ETH Z\"urich,
8092 Z\"urich, Switzerland}
\email{giovanni.felder@math.ethz.ch}
\author{Ajay Ramadoss}
\address{Departement Mathematik,
ETH Z\"urich,
8092 Z\"urich, Switzerland}
\email{ajay.ramadoss@math.ethz.ch}

\subjclass[2010]{Primary 16W25, 17B63, 18G55; Secondary 16E40, 16E45, 53D30, 55P50}

\begin{abstract}
Some 15 years ago M.~Kontsevich and A.~Rosenberg \cite{KR} proposed a heuristic principle
according to which the family of schemes $ \{\Rep_n(A)\} $ parametrizing the
finite-dimensional representations of a noncommutative algebra $A$ should be thought
of as a substitute or `approximation' for `$\Spec(A)$'. The idea is that every property
or noncommutative geometric structure on $ A $ should induce a corresponding geometric property or structure on $ \Rep_n(A) $ for all $n$. In recent years, many interesting structures in noncommutative geometry have originated from this idea. In practice, however, if an associative algebra $A$ possesses a
property of geometric nature (e.g., $A$ is a NC complete intersection, Cohen-Macaulay,
Calabi-Yau, etc.), it often happens that, for some $n$, the scheme $ \Rep_n(A) $ {\it fails} to have the corresponding property in the usual algebro-geometric sense.
The reason for this seems to be that the
representation functor $ \Rep_n $ is not `exact' and should be replaced by its
derived functor $ \DRep_n $ (in the sense of non-abelian homological algebra).
The higher homology of $ \DRep_n(A) $, which we call representation homology,
obstructs $ \Rep_n(A) $ from having the desired property and thus measures the
failure of the Kontsevich-Rosenberg `approximation.' In this paper, which is
mostly a survey, we prove several results confirming this intuition. We also give
a number of examples and explicit computations illustrating the theory
developed in \cite{BKR} and \cite{BR}.
\end{abstract}

\maketitle
\begin{flushright}
{\it To the memory of Alexander Rosenberg}
\end{flushright}
\section{Introduction}
If $ k $ is a field, the set of all representations of an associative  $k$-algebra $A$ in a finite-dimensional vector space
$ V $ can be given the structure of an affine $k$-scheme, called the representation scheme
$ \Rep_V(A) $. The group $ \GL_k(V) $
acts naturally on $ \Rep_V(A) $, with orbits corresponding to the isomorphism classes of representations.
If $ k $ is algebraically closed and $ A $ is finitely generated, the equivariant geometry of $ \Rep_V(A) $ is closely related to
the representation theory of $A$. This relation has been extensively studied (especially in the case of
finite-dimensional algebras) since the late 70's, and the schemes $ \Rep_V(A) $ have become a standard tool
in representation theory of  algebras (see, for example, \cite{Ga}, \cite{Bo}, \cite{Ge} and references therein).

More recently, representation schemes have come to play an important role in noncommutative  geometry. Let us recall
that in classical (commutative) algebraic geometry, there is a natural way to associate to a commutative algebra $A$
a geometric object ---  the Grothendieck prime spectrum $ \Spec(A) $. This defines a contravariant functor
from commutative algebras to affine schemes, which is  an (anti)equivalence of categories.
Attempts to extend this functor to the category of all associative algebras have been largely unsuccessful.
In \cite{KR} M. Kontsevich and A. Rosenberg proposed a heuristic principle according to which
the family of schemes $ \{\Rep_V(A) \} $ for a given  algebra $A$ should be thought of as a substitute (or ``approximation'') for
 ``$ \Spec(A) $''.  The idea is that every property or noncommutative geometric structure on $ A $  should naturally induce a
corresponding geometric property or structure on $ \Rep_V(A) $ for all $V$. This viewpoint provides a litmus test for proposed definitions
of noncommutative analogues of classical geometric notions. In recent years, many interesting structures in noncommutative
geometry have originated from this idea: NC smooth spaces  \cite{CQ, KR, LeB}, formal structures and noncommutative
thickenings of schemes \cite{Ka1, LBW}, noncommutative symplectic and bisymplectic geometry  \cite{Ko, G2, LeB1, CEG, Be, BC},
double Poisson brackets and noncommutative quasi-Hamiltonian spaces \cite{vdB, vdB1, CB, MT}.
In practice, however, the Kontsevich-Rosenberg principle works well only when $A$ is a (formally) smooth algebra, since in that case
$ \Rep_V(A) $ are smooth schemes for all $V$. To extend this principle to arbitrary algebras we proposed
in \cite{BKR} to replace $ \Rep_V(A) $ by a DG scheme $ \DRep_V(A) $, which is obtained by deriving
the classical representation functor in the sense of Quillen's homotopical algebra \cite{Q1, Q2}.  Passing from $ \Rep_V(A) $ to
$ \DRep_V(A) $ amounts, in a sense, to desingularizing $ \Rep_V(A) $, so one should expect that
$ \DRep_V(A) $ will play a role  similar to the role of $ \Rep_V(A) $ in the geometry of smooth algebras.

To explain this idea in more detail let us recall that the representation scheme $ \Rep_V(A) $ is defined as a functor on the category of commutative $k$-algebras:
\begin{equation}
\la{rep0}
\Rep_V(A):\ \cAlg_{k} \to \Sets\ ,\quad B \mapsto \Hom_{\Alg_k}(A,\, \End\,V \otimes_k B)\ .
\end{equation}
It is well known that  \eqref{rep0}  is (co)representable, and we denote the corresponding commutative algebra by
$\,A_V = k[\Rep_V(A)] \,$. Now, varying $A$ (while keeping $ V $ fixed) we can regard \eqref{rep0}  as a functor on the
category $ \Alg_k $ of associative algebras; more precisely, we define the {\it representation functor} in $ V $ by
\begin{equation}
\la{rep}
(\,\mbox{--}\,)_V :\, \Alg_k \to \cAlg_k\ , \quad A \mapsto k[\Rep_V(A)]  \ .
\end{equation}
The representation functor can be extended  to the category of differential graded (DG) algebras, $ \DGA_k $, which
has a natural model structure in the sense of \cite{Q1}. It turns out that $\,(\,\mbox{--}\,)_V\,$ defines
a left Quillen functor on $ \DGA_k $,
and hence it has a total derived functor $\, \L(\,\mbox{--}\,)_V: \Ho(\DGA_k) \to \Ho(\cDGA_k) \,$ on the homotopy category of DG algebras.  When applied to $A$,
this derived functor is represented  by a commutative DG algebra $ \DRep_V (A) $.
The homology of $\DRep_V (A)$ depends only on $A$ and $V$, with $ \H_0 [\DRep_V (A)] $
being isomorphic to $ k[\Rep_V (A)] $. Following \cite{BKR}, we will refer to $ \H_\bullet [\DRep_V (A)] $
as the {\it representation homology} of $A$ and denote it by $ \H_\bullet(A,V) $.
The action of $ \GL(V ) $ on $ k[\Rep_V (A)] $  extends naturally to $ \DRep_V (A) $, and we have an isomorphism of graded algebras
$$
\H_\bullet [\DRep_V (A)^{\GL(V )}] \cong \H_\bullet(A, V )^{\GL(V )}\ .
$$

Now, let $ \HC_\bullet(A) $ denote the cyclic homology of the algebra $A$. There is a canonical trace map
\begin{equation}
\la{trr}
\Tr_V(A)_0:\, \HC_0(A) \to k[\Rep_V (A)]^{\GL(V )}
\end{equation}
defined by taking characters of representations. One of the key results of \cite{BKR}
is the construction of the higher trace maps
\begin{equation}
\la{trr1}
\Tr_V(A)_n:\, \HC_n(A) \to \H_n(A,V)^{\GL(V)}\ ,\quad \forall\,n\ge 0\ ,
\end{equation}
extending \eqref{trr} to the full cyclic homology. It is natural to think of
\eqref{trr1}
as derived (or higher) characters of finite-dimensional representations of $A$. In accordance with Kontsevich-Rosenberg principle, various standard structures on cyclic and Hochschild homology (e.g., Bott periodicity, the Connes differential, the Gerstenhaber bracket, etc.) induce via \eqref{trr1} new interesting structures on representation homology. We illustrate this in Section~\ref{S5.33}, where we construct an analogue of Connes' periodicity exact sequence for $ \H_\bullet(A,V) $. We should mention that the idea of deriving representation schemes is certainly not new: the first construction of this kind was proposed in \cite{CK} ({\it cf.} Section~\ref{S2.3.5} below), and there are nowadays several different approaches (see, e.g., \cite{Ka}, \cite{BCHR}, \cite{TV}). However, the trace maps \eqref{trr1} seem to be new, and the relation to cyclic homology has not appeared in the earlier literature.

The aim of this paper is threefold. First, we give a detailed overview of \cite{BKR}
and \cite{BR} leaving out most technical proofs but adding motivation and necessary background
on homotopical algebra and model categories. Second, we prove several new results on
derived representation schemes refining and extending \cite{BKR}. Third, we give a number of explicit examples and computations illustrating the theory.

We would like to conclude this introduction with a general remark that clarifies the meaning of
representation homology from the point of view of noncommutative geometry. If an associative algebra
$A$  possesses a property of geometric nature (for example,  $A$ is a  NC complete intersection, Cohen-Macaulay,
Calabi-Yau, etc.), it may happen that, for some $V$, the scheme $ \Rep_V(A) $   {\it fails} to have a corresponding
property in the usual algebro-geometric sense. The reason for this seems to be that the representation functor
$ \Rep_V $ is not exact,  and it is precisely the higher homology $ \H_n(A,V) \,$, $\,n\ge 1 \,$,
that obstructs  $ \Rep_V(A) $ from having the desired property. In other words, representation
homology measures the failure of the Kontsevich-Rosenberg ``approximation.''  In Section~\ref{S6},
we prove two results confirming this intuition. First, we show that if $ A $ is a (formally) smooth algebra
then $ \H_n(A,V) = 0 \,$, $\,n\ge 1 \,$,  for all $ V $ (see Theorem~\ref{abt1}). This explains the well-known
fact that all representation schemes $ \Rep_V(A) $ of a smooth algebra are smooth.
Second, we prove that if $A$ is a noncommutative complete intersection (in the sense of \cite{Go})
then $ \Rep_V(A) $ is a complete intersection if  $ \H_n(A,V) = 0 $ for all $ n \ge 1 $ (see Theorem~\ref{RepHom}).

We now proceed with a summary of the contents of the paper.  Section~\ref{2} is a brief introduction
to Quillen's theory of model categories; in this section, we also recall some basic facts about DG algebras and
DG schemes needed for the present paper. In Section~\ref{S2}, we present our construction of
derived representation schemes and describe their basic properties. The main result of this section is
Theorem~\ref{S2T2}.  In Section~\ref{S3},  after reviewing the Feigin-Tsygan construction of
relative cyclic homology $ \HC_{\bullet}(S \bs A) $, we define canonical trace maps
$ \Tr_V(S \bs A)_\bullet:\,\HC_{\bullet -1}(S \bs A) \to \H_\bullet (S\bs A, V) $
relating the cyclic homology of an $S$-algebra $A$ to its representation homology.
In particular, for $ S = k $, we get the derived character maps \eqref{trr1}.
The main result of this section, Theorem~\ref{mcor}, describes an explicit chain
map $\, T: \CC(A) \to \DRep_V(A) \,$  that induces on homology the trace maps \eqref{trr1}.
We also draw reader's attention to Theorem~\ref{eqfun} and Corollary~\ref{corf1}
which summarize the main results of our forthcoming paper \cite{BR}.
In Section~\ref{S5}, we define and construct the abelianization of the representation
functor. The  main result of this section, Theorem~\ref{abvdb}, shows that the abelianized
representation functor is precisely (the DG extension of) Van den Bergh's functor
introduced in \cite{vdB}.  This is a new result that has not appeared in \cite{BKR}.
As a consequence, we give a simpler and more conceptual proof of Theorem~\ref{T2},
which was one of the main results of \cite{BKR}. Theorem~\ref{abvdb} also leads to
an interesting spectral sequence that clarifies the relation between representation
homology and Andr\`e-Quillen homology  (see Section~\ref{RHAQ}).
Finally, in Section~\ref{S6}, we give some examples. One notable result  is
Theorem~\ref{RepHompoly}  which says that
\begin{equation}
\la{vanhn}
\H_n(k[x,y],\,V) = 0\ ,\ \forall \, n > \dim\,V\ ,
\end{equation}
where $ k[x,y] $ is the polynomial algebra in two variables. We originally conjectured
\eqref{vanhn}  studying the  homology of $ k[x,y] $ with the help of {\tt Macaulay2}.
It came as a surprise that this vanishing result is a simple consequence of a known theorem of
Knutson \cite{Kn}.

\subsection*{Acknowledgements}{\footnotesize
The first author (Yu. B.) would like to thank K.~Iguza, A.~Martsinkovsky and G.~Todorov for inviting him to give an expository lecture at the 2012 Maurice Auslander Distinguished Lectures and International Conference.
This paper evolved from notes of that lecture and was written up during Yu.B.'s  stay at Forschungsinstitut
f\"ur Mathematik (ETH, Z\"urich) in Fall 2012. He is very grateful to this institution for its hospitality and financial support.}

\section*{Notation and Conventions}
Throughout this paper, $k$ denotes a base field of characteristic zero.
An unadorned tensor product $\, \otimes \,$ stands for the tensor product $\, \otimes_k \,$ over $k$.
An algebra means an associative $k$-algebra with $1$; the category of such algebras is denoted $ \Alg_k $.
Unless stated otherwise,  all differential graded (DG) objects (complexes, algebras, modules, $\,\ldots \,$)
are equipped with differentials of degree $-1$. The Koszul sign rule is systematically used.
For a graded vector space $V$,  we denote by $ \bL(V) $ its graded
symmetric algebra  over $k\,$: {\it i.e.}, $\, \bL(V) := \Sym_k(V_{\rm ev}) \otimes \Lambda_k(V_{\rm odd}) $,
where $ V_{\rm ev}$ and $ V_{\rm odd}$ are the even and the odd components of $V$.

\section{Model categories}
\la{2}
A model category is a category with a certain structure that allows one to do
non-abelian homological algebra (see  \cite{Q1,Q2}).  Fundamental examples
are the categories of topological spaces and simplicial sets.
However, the theory also applies to algebraic categories, including chain complexes,
differential graded algebras and differential graded modules.
In this section, we briefly recall the definition of model categories and review the results
needed for the present paper. Most of these results are well known; apart from the original works of Quillen,
proofs can be found in \cite{Hir} and \cite{Ho}.  For an excellent introduction we recommend the Dwyer-Spalinski
article \cite{DS}; for examples and applications of model categories in algebraic topology see \cite{GS} and \cite{He};
for spectacular recent applications in algebra we refer to the survey papers \cite{Ke} and \cite{S}.

\subsection{}\la{2.1}\textbf{Axioms.}
A {\it (closed) model category} is a category
$\C$ equipped with three distinguished classes of morphisms: {\it weak equivalences}
$ (\,\stackrel{\sim}{\to}\,) $, {\it fibrations} $ (\onto) $ and
{\it cofibrations} $ (\into) $. Each of these classes is closed under
composition and contains all identity maps. Morphisms that are both fibrations
and weak equivalences are called {\it acyclic fibrations} and denoted $\,\stackrel{\sim}{\onto}\,$.
Morphisms that are both cofibrations and weak equivalences are called {\it acyclic cofibrations}
and denoted $\,\stackrel{\sim}{\into}\,$.  The following five axioms are required.

\vspace{1ex}

\begin{lyxlist}{00.00.0000}
\item [{$\mathrm{MC}1$}]
$\C$ has all finite limits and colimits. In particular, $ \C $ has initial and terminal objects, which we denote
`$e$' and `$ * $', respectively. \smallskip{}

\item [{$\mathrm{MC}2$}] {\it Two-out-of-three axiom}:
If $f: X \to Y $ and $ g: Y \to Z $ are maps in $ \C $ and any
two of the three maps $f,$ $g,$ and $gf$ are weak equivalences, then so is the third. \smallskip{}

\item [{$\mathrm{MC}3$}]{\it Retract axiom}:
Each of the three distinguished classes of maps
is closed under taking retracts; by definition, $ f $ is a {\it retract} of $g$
if there is a commutative diagram
\begin{equation*}
\begin{diagram}[small, tight]
X               & \rTo  &  X'      &  \rTo  & X\\
\dTo^{f}        &       & \dTo^{g} &        & \dTo^{f} \\
Y               & \rTo  &  Y'      &  \rTo  & Y\\
\end{diagram}
\end{equation*}
such that the composition of the top and bottom rows is the identity.

\item [{$\mathrm{MC}4$}] {\it Lifting axiom}:
Suppose that
\begin{equation*}
\begin{diagram}[small, tight]
                   A &  \rTo^{} & X\\
\dInto^{}        &  \ruDotsto                      & \dOnto_{} \\
B                &  \rTo^{}                   & Y
\end{diagram}
\end{equation*}
is a square in which $ A \to B $ is a cofibration and $ X \to Y $ is a fibration.
Then, if either of the two vertical maps is a weak equivalence, there is a lifting $\,B \to X \,$ making the
diagram commute. We say that $ A \to B $ has the {\it left-lifting property} with respect to
$ X \to Y $, and $ X \to Y $ has a {\it right-lifting property} with respect to $ A \to B $.
\smallskip{}

\item [{$\mathrm{MC}5$}] {\it Factorization axiom}:
Any map $ A \to X $ in $\C$ may be factored in two ways:
$$
\mbox{\rm (i)}\ A \stackrel{\sim}{\into} B \onto X \ ,\qquad
\mbox{\rm (ii)}\ A \into Y \stackrel{\sim}{\onto} X \ .
$$
\end{lyxlist}

An object $ A \in\mathrm{Ob}(\C)$ is called {\it fibrant} if the unique morphism $\,A \to * \,$
is a fibration in $ \C$. Similarly, $\, A \in \mathrm{Ob}(\C)\,$ is {\it cofibrant} if the unique morphism $ e \to A $
is a cofibration in $ \C $. A model category $ \C $ is called {\it fibrant} (resp., {\it cofibrant})
if all objects of $ \C $ are fibrant (resp., {\it cofibrant}).

\vspace{1ex}

\noindent
\textbf{Remark.}
The notion of a model category was introduced by Quillen in \cite{Q1}.
He called such a category {\it closed} whenever any two of the three distinguished
classes of morphisms determined the third. In \cite{Q2}, Quillen characterized the closed model categories by the five axioms stated above. Nowadays, it seems generally agreed
to refer to a closed model category just as a model category. Also, in the current literature (see, e.g., \cite{Hir} and \cite{Ho}), the first and the last axioms in Quillen's list
are often stated in the stronger form: in MC1, one usually requires the existence of small (not only finite) limits and colimits, while MC5 assumes the existence of {\it functorial} factorizations.

\vspace{1ex}

\noindent
\textbf{Example.}
Let $A$ be an algebra, and let $ \Com^+(A) $ denote the category of complexes of $A$-modules that have zero terms in negative degrees. This category has a standard (projective) model structure, where the weak equivalences are the quasi-isomorphisms, the fibrations are the maps that are surjective in all positive degrees and the cofibrations are the monomorphisms whose cokernels are complexes with projective components.
The initial and the terminal objects in $ \Com^+(A) $ are the same, namely the zero complex. All objects are fibrant, while
the cofibrant objects are precisely the projective complexes ({\it i.e.}, the complexes consisting of projective modules
in each degree). A similar model structure exists on the category of complexes $ \Com^+(\mathscr{A}) $ over any
abelian category $ \mathscr{A} $ with sufficiently many projectives (see \cite{Q1}, \S \,I.1.2, Example~B).
The category $ \Com(A) $ of all (unbounded) complexes of $A$-modules also has a projective model structure with quasi-isomorphisms being the weak equivalences and the epimorphisms being the fibrations. The cofibrations in $ \Com(A) $ are monomorphisms with degreewise projective cokernels; however, unlike in $  \Com^+(A) $, not all such monomorphisms are cofibrations ({\it cf.} \cite{Ho}, Sect.~2.3).

\subsection{Natural constructions}
\la{newcon}
There are natural ways to build a new model category from a given one:

\subsubsection{}
\la{2.1.1} The axioms of a model category are self-dual: if $ \C $ is a model category, then so is its opposite
$ \C^{\rm opp}$. The classes of weak equivalences in $ \C $ and $ \C^{\rm opp} $ are the same, while
the classes of fibrations and cofibrations  are interchanged.

\subsubsection{}
\la{2.1.2}
If $ S \in \mathrm{Ob}(\C) $ is a fixed object in a model category $ \C $, then the category $ \C_S $ of arrows
$ \{S \to A \} $ starting at $S$ has a natural model structure, with a morphism $ f: A \rightarrow B $ being in a
distinguished class in $ \C_S $  if and only if $f$ is in the corresponding class in $ \C $. Dually, there
is a similar model structure on the category of arrows $ \{A \to S\} $ with target at $S$.

\subsubsection{}
\la{2.1.2m}
The category $ \Mor(\C) $ of all morphisms in a model category $ \C $ has a natural model structure, in which a morphism
$\,(\alpha, \beta):\, f \to f' \,$ given by the commutative diagram
\begin{equation*}
\begin{diagram}[small, tight]
A                 & \rTo^{\alpha}    &  A'         \\
\dTo^{f}          &                  & \dTo_{f'}   \\
B                 & \rTo^{\beta}     &  B'
\end{diagram}
\end{equation*}
is a weak equivalence (resp., a fibration) iff $ \alpha $ and $ \beta $ are weak equivalences (resp., fibrations)
in $ \C $. The morphism $ (\alpha, \beta) $ is a cofibration in $ \Mor(\C) $ iff $\, \alpha \,$ is a cofibration
and also the induced morphism $\,B \amalg_A A' \to B' \,$ is cofibration in $ \C $ ({\it cf.} \cite{R}).

\subsubsection{}
\la{2.1.3}
Let $ \D := \{a \leftarrow b \rightarrow c\} $ be the category with three objects $ \{a,b,c\} $ and the two
indicated non-identity morphisms. Given a category $ \C $, let $ \C^\D $ denotes the category of
functors $\, \D \to \C\,$. An object in $ \C^\D $ is pushout data in $ \C $:
$$
X(a) \leftarrow X(b) \rightarrow X(c)\ ,
$$
and a morphism $ \varphi: X \to Y $ is a commutative diagram
\begin{equation*}
\la{pushd}
\begin{diagram}[small, tight]
X(a)                 & \lTo    & X(b)             & \rTo             & X(c) \\
\dTo^{\varphi_a}     &         & \dTo^{\varphi_b} &                  &\dTo^{\varphi_c}  \\
Y(a)                 & \lTo    & Y(b)             & \rTo             & Y(c)
\end{diagram}
\end{equation*}
If $ \C $ is a model category, then there is a (unique) model structure on  $ \C^\D $,
where  $ \varphi $ is a weak equivalence (resp., fibration) iff
$ \varphi_a $, $\, \varphi_b $, $\, \varphi_c $ are weak equivalences (resp., fibrations)
in $ \C $. The cofibrations in $ \C^\D $ are described as the morphisms $ \varphi = (\varphi_a,\,
\varphi_b,\, \varphi_c) $, with $ \varphi_b $ being a cofibration and also the two induced maps
$\, X(a) \amalg_{X(b)} Y(b) \to Y(a) \,$, $\, X(c) \amalg_{X(b)} Y(b) \to Y(c) \,$ being cofibrations
in $ \C $.
Dually, there is a (unique) model structure on the category of pullback data
$ \C^\D $, where $ \D := \{a \rightarrow b \leftarrow c\} $.

\subsection{Homotopy category}
\la{2.2}
In an arbitrary model category, there are two different ways to define a homotopy equivalence relation.
For simplicity of exposition, we will assume that  $ \C $ is a {\bf fibrant} model category, in which case we can
use only one definition (`left' homotopy)  based on the cylinder objects.

If $ A \in \mathrm{Ob}(\C) $, a {\it cylinder} on $ A $ is an object $\Cyl(A) \in \Ob(\C) $
given together with a diagram
$$
A\amalg A \stackrel{i}{\into} \Cyl(A)\overset{\sim}{\onto}A \ ,
$$
factoring the natural map $\,(\mathrm{id},\, \mathrm{id}):\,A\amalg A\rightarrow A $.
By MC5(ii), such an object exists for all $A$ and comes together with two morphisms
$\, i_0:\, A \to \Cyl(A) \,$ and $\, i_1:\, A \to \Cyl(A) \,$, which are the restrictions of
$ i $ to the two canonical copies of $A$ in $ A \amalg A$.
In the category of topological spaces, there are natural cylinders:
$\, \Cyl(A) = A \times [0,\,1] $, with $\, i_0:\, A \to A \times [0,\,1] \,$ and
$\, i_1:\, A \to A \times [0,\,1]\,$ being the obvious embeddings. However,
in general, the cylinder objects $ \Cyl(A) $ are neither unique nor functorial in $A$.

Dually, if $X\in \mathrm{Ob}(\C)$, a {\it path object} on $X$ is an object $\Path(X)$ together
with a diagram
$$
X \stackrel{\sim}{\into} \Path(X) \stackrel{p}{\onto} X \times X\,
$$
factoring the natural map $\,(\mathrm{id},\, \mathrm{id}):\,X \rightarrow X \times X $.

If $\,f,g: A \rightarrow X\,$ are two morphisms in $\C$, a {\it homotopy}
from $f$ to $g$ is a map $\,H:\,\Cyl(A) \to X $ from a cylinder object on $A$ to $X$
such that the diagram
\begin{equation*}
\begin{diagram}[small, tight]
A & \rInto^{i_0} & \Cyl(A) & \lInto^{i_1} & A \\
  & \rdTo_{f} & \dDotsto^{H} & \ldTo_{g} & \\
  &           &   X
\end{diagram}
\end{equation*}
commutes. If such a map exists, we say that $ f $ is {\it homotopic} to $g$ and write $\,f \sim g \,$.

If $A$ is cofibrant, the homotopy relation between morphisms $\,f,g: A \rightarrow X\,$ can be
described in terms of path objects: precisely, $\,f \sim g\,$ iff there exists a map
$\,H:\,A \to \Path(X)\,$ for some path object on $X$ such that
\begin{equation*}
\begin{diagram}[small, tight]
  &           &   A\\
  & \ldTo^{f} & \dDotsto^{H} & \rdTo^{g} & \\
  X & \lOnto^{p_0} & \Path(X) & \rOnto^{p_1} & X
\end{diagram}
\end{equation*}
commutes. Also, if $A$ is cofibrant and $\,f \sim g\,$, then for any path object on $X$, there is a map
$\,H:A \to \Path(X)$ such that the above diagram commutes.

Applying $\mathrm{MC}5(\mathrm{ii})$ to the canonical morphism
$ e\rightarrow A,$ we obtain a cofibrant object $QA$ with
an acyclic fibration $ QA\overset{\sim}{\twoheadrightarrow}A.$ This
is called a {\it cofibrant resolution} of $A.$ As usual, a cofibrant resolution
is not unique, but it is unique up to homotopy equivalence: for any pair of cofibrant
resolutions $QA,$ $Q'A,$ there exist morphisms
\[ QA\underset{g}{\overset{f}{\rightleftarrows}}Q'A\]
such that $fg \sim \id $ and $gf \sim \id $.
By $\mathrm{MC}4$, for any morphism $f: A \rightarrow X $ and any cofibrant resolutions
$QA \sonto A$ and $ QX \sonto A $ there is a map $\tilde{f}: QA \to QX $ making the
following diagram commute:
\begin{equation}
\la{2.2.4}
\begin{diagram}[small, tight]
QA &  \rDotsto^{\tilde{f}} & QX\\
\dOnto       &                         & \dOnto \\
A              &  \rTo^{f}             & X
\end{diagram}
\end{equation}
We call this map a cofibrant lifting of $\,f\,$; it is uniquely determined by $f$ up to homotopy.

When $A$ and $X$ are both cofibrant objects in $ \C$, homotopy defines
an equivalence relation on $ \Hom_{\C}(A, X)$. In this case, we write
$$
\pi(A, X) :=  \Hom_{\C}(A, X)/\sim\ .
$$
The {\it homotopy category} of $ \C $ is now defined to be a category $ \Ho(\C) $
with $ \Ob(\Ho(\C)) = \Ob(\C) $ and
$$
\Hom_{\Ho(\C)}(A,\,X) := \pi(QA,\,QX)\ ,
$$
where $QA$ and $QX$ are cofibrant resolutions of $A$ and $X$.
For $A$ and $A'$  both cofibrant objects in $ \C $,  it is easy to check that
\begin{equation*}
f \sim h:\,A \to A'\ ,\quad g \sim k:\,A'\to X\quad
\Rightarrow\quad gf \sim hk:\,A \to X\ .
\end{equation*}
This ensures that the composition of morphisms in $ \Ho(\C) $ is well defined.

There is a canonical functor $\gamma:\,\C \rightarrow \Ho(\C)$
acting as the identity on objects while sending each morphism $ f \in \C $ to the homotopy
class of its lifting $ \tilde{f} \in \Ho(\C)\,$, see \eqref{2.2.4}.

\begin{theorem}
\la{Tloc}
Let $\C $ be a model category, and $ \D $ any category.
Given a functor $ F: \C \rightarrow \D $ sending weak equivalences to isomorphisms,
there is a unique  functor $ \bar{F}:\Ho(\C)\rightarrow \D $ such that $ \bar{F} \circ \gamma =F \,$.
\end{theorem}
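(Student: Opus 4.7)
The plan is to construct $\bar F$ explicitly from the cofibrant-resolution data and then verify the universal property. On objects, I set $\bar F(A) := F(A)$. For a morphism $[\tilde f] \in \pi(QA, QX) = \Hom_{\Ho(\C)}(A, X)$, I define
$$\bar F([\tilde f]) := F(p_X) \circ F(\tilde f) \circ F(p_A)^{-1},$$
where $p_A : QA \sonto A$ and $p_X : QX \sonto X$ are the fixed acyclic fibrations of the cofibrant resolutions; the inverses exist in $\D$ by hypothesis, because $F$ inverts weak equivalences. The real content is to show that this assignment descends to homotopy classes.

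The main (and essentially only substantive) step is to verify that homotopic parallel maps $\tilde f \sim \tilde g : QA \to QX$ satisfy $F(\tilde f) = F(\tilde g)$. For this I pick a cylinder $QA \amalg QA \stackrel{i}{\into} \Cyl(QA) \stackrel{p}{\sonto} QA$ and a homotopy $H : \Cyl(QA) \to QX$ with $H i_0 = \tilde f$ and $H i_1 = \tilde g$. Since $p \circ i_0 = p \circ i_1 = \id_{QA}$ and $p$ is a weak equivalence, the two-out-of-three axiom MC2 forces $i_0$ and $i_1$ to be weak equivalences themselves. Applying $F$ to the relations $F(p) F(i_0) = F(p) F(i_1) = \id$ and using the invertibility of $F(p)$ yields $F(i_0) = F(p)^{-1} = F(i_1)$, and hence $F(\tilde f) = F(H) F(i_0) = F(H) F(i_1) = F(\tilde g)$. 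This observation — that both cylinder ends are identified once $F$ is applied — is where the argument really lives.

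With well-definedness established, the remaining checks are bookkeeping. Functoriality of $\bar F$ (identities and composition) follows by a diagram chase using compatible liftings between cofibrant resolutions. The identity $\bar F \circ \gamma = F$ follows directly from the definition: given $f : A \to X$, its cofibrant lifting $\tilde f : QA \to QX$ satisfies $p_X \tilde f = f p_A$ by the commutative square \eqref{2.2.4}, so applying $F$ gives $F(p_X) F(\tilde f) F(p_A)^{-1} = F(f) F(p_A) F(p_A)^{-1} = F(f)$. For uniqueness, any functor $G : \Ho(\C) \to \D$ with $G \gamma = F$ must agree with $F$ on objects and on morphisms in the image of $\gamma$, and in particular must send $\gamma(p_A), \gamma(p_X)$ to $F(p_A), F(p_X)$; since every morphism $[\tilde f]$ of $\Ho(\C)$ can be expressed via $\gamma$-images of $p_A$, $p_X$ and $\tilde f$ together with the forced inverses of $\gamma$ applied to weak equivalences, the formula for $\bar F$ is the only possibility.

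The hard part is the homotopy-invariance step above, and it is precisely the place where more of the model-category structure than just the class of weak equivalences is actually used: the existence (MC5) of a cylinder object together with MC2 is what produces the identification $F(i_0) = F(i_1)$. Everything else is formal once that identification is in hand.
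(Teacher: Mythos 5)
Your proof is correct, and it is the standard argument; the paper states Theorem~\ref{Tloc} without proof, deferring to the references it cites (Quillen \cite{Q1}, Dwyer--Spalinski \cite{DS}, Hovey \cite{Ho}), where precisely this argument appears. The essential mechanism you identify --- factoring $(\id,\id)$ through a cylinder and using MC2 to conclude that $F(i_0) = F(p)^{-1} = F(i_1)$, so that $F$ descends to homotopy classes of maps between cofibrant objects --- is the same one used there, and the remaining verifications (functoriality of $\bar F$, the identity $\bar F\circ\gamma = F$, uniqueness) are, as you say, formal consequences of the defining formula $\bar F([\tilde f]) = F(p_X)F(\tilde f)F(p_A)^{-1}$.
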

Theorem~\ref{Tloc} shows that the category $ \Ho(\C) $ is the abstract (universal) localization of the category $ \C $ at the class $ W $ of weak equivalences. Thus $ \Ho(\C) $ depends only on $ \C $ and $ W $.
On the other hand, the model structure on $ \C $ is not
determined by $ \C $ and $ W \,$: it does depend the choice of fibrations and cofibrations in $\C$ (see \cite{Q1}, \S\,I.1.17, Example~3). The fibrations and cofibrations are needed to
control the morphisms in $ \Ho(\C)$.

\subsection{Derived functors}
\la{2.3}
Let $ F:\C \to \D$ be a functor between model categories.
A {\it (total) left derived functor} of $F$ is a functor $\,\L F: \,\Ho(\C) \to \Ho(\D) \,$ given together with a natural
transformation
\[
\L F: \,\Ho(\C) \to \Ho(\D)\ ,\qquad t:\,\L F\circ\gamma_{\C} \to \gamma_{\D} \circ F
\]
which are universal with respect to the following property: for any pair
\[
G:\Ho(\C) \rightarrow \Ho(\D),\qquad s: \, G\circ\gamma_{\C} \rightarrow \gamma_{\D} \circ F
\]
there is a unique natural transformation $\, s':\, G \rightarrow \L F\,$ such that
\[
\begin{diagram}[small, tight]
G\circ\gamma_{\C} &                        &  \rTo^{s}      &                &  \gamma_{\D} \circ F  \\
             & \rdDotsto_{s' \gamma}  &                &  \ruTo_{t} &  \\
             &                      & \L F \circ \gamma_{\C} &               &
\end{diagram}
\]
There is a dual notion of a right derived functor $\R F$ obtained by
reversing the arrows in the above definition ({\it cf.} \ref{2.1.1}).
If they exist, the functors $ \L F $ and $ \R F $ are unique up to
canonical natural equivalence.
If $ F $ sends weak equivalences to weak equivalence, then both
$ \L F $ and $ \R F $ exist and, by Theorem~\ref{Tloc},
$$
\L F = \gamma\,\bar{F} = \R F\ ,
$$
where $ \bar{F}: \Ho(\C) \to \D $ is the extension of $F$ to $ \Ho(\C)$.
In general, the functor $F$ does not extend to $ \Ho(\C) $, and
$ \L F $ and $ \R F $ should be viewed as the best possible approximations to
such an extension `from the left' and `from the right', respectively.

\subsection{The Adjunction Theorem}
\la{2.4}
One of the main results in the theory of model categories is Quillen's Adjunction Theorem.
This theorem consists of two parts: part one provides sufficient conditions for the existence
of derived functors for a pair of adjoint functors between model categories and part two
establishes a criterion for these  functors to induce an
equivalence  at the level of homotopy categories.
We will state these two parts as separate theorems.
We begin with the following  observation which is a direct consequence of basic axioms.
\blemma
\la{Qpair}
Let $ \C $ and $ \D $ be model categories. Let
\begin{equation*}
\la{Qp}
F:\, \C\rightleftarrows\D \,: G
\end{equation*}
be a pair of adjoint functors. Then the following conditions are equivalent:

$(a)$ $F$ preserves cofibrations and acyclic cofibrations,

$(b)$ $G$ preserves fibrations and acyclic fibrations,

$(c)$ $F$ preserves cofibrations and $G$ preserves fibrations.

\elemma

\noindent
A pair of functors  $ (F,G) $  satisfying the conditions of Lemma~\ref{Qpair} is called a {\it Quillen pair}; it should be
thought of as a `map' (or morphism) of model categories from $ \C $ to $ \D $.
The next theorem justifies this point of view.
\begin{theorem}
\la{Qthm}
Let  $\,F:\,\C\rightleftarrows\D :G\,$ be a Quillen pair. Then  the total derived functors
$\L F$ and $\R G$ exist and form an adjoint pair
\begin{equation}
\la{LGF}
\L F:\Ho(\C)\rightleftarrows \Ho(\D):\R G\, .
\end{equation}
The functor $\L F$ is defined by
\begin{equation}
\la{derf}
\L F(A) = \gamma\,F(QA)\ ,\quad \L F(f) = \gamma\,F(\tilde{f})\ ,
\end{equation}
where $ QA \sonto A $ is a cofibrant
resolution in $ \C $ and $ \tilde{f} $ is a lifting of $ f \,$, see \eqref{2.2.4}.
\end{theorem}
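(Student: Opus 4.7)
My plan is to construct $\L F$ directly from the formula in the statement, verify it descends to a well-defined functor on $\Ho(\C)$, check the universal property stated in Section~\ref{2.3}, and finally transfer the original adjunction to the homotopy level. The construction of $\R G$ (using fibrant resolutions and path objects dually) is strictly parallel and I would not write it separately.

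The technical heart is \emph{Ken Brown's lemma}: any left Quillen functor $F$ preserves weak equivalences between cofibrant objects. Given such $f:A\to B$, I would factor the codiagonal extension $(f,\id_B):A\amalg B\to B$ via MC5(i) as a cofibration $A\amalg B\into C$ followed by an acyclic fibration $p:C\sonto B$. Cofibrancy of $A$ and $B$ implies the coproduct inclusions into $A\amalg B$ are cofibrations, hence the composites $i_A:A\to C$ and $i_B:B\to C$ are cofibrations. MC2 applied to $p\circ i_B=\id_B$ forces $i_B$ to be a weak equivalence, and applied to $p\circ i_A=f$ forces $i_A$ to be one as well. Since $F$ preserves acyclic cofibrations by hypothesis, $F(i_A)$ and $F(i_B)$ are weak equivalences; two-out-of-three on $F(p)\circ F(i_B)=\id_{F(B)}$ makes $F(p)$ a weak equivalence, and therefore $F(f)=F(p)\circ F(i_A)$ is a weak equivalence.

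Well-definedness of $\L F$ then splits into three steps. Different cofibrant resolutions $QA,Q'A$ are connected by mutually inverse homotopy equivalences, so by Ken Brown their images become canonically isomorphic in $\Ho(\D)$; thus $\L F(A)$ is independent of the choice of $QA$. Two cofibrant liftings $\tilde f,\tilde f'$ of the same $f$ are left-homotopic via some $H:\Cyl(QA)\to QX$; because $F$ preserves colimits (as a left adjoint) and acyclic cofibrations, the image of the cylinder factorization $QA\amalg QA\into \Cyl(QA)\sonto QA$ is a cylinder object for $F(QA)$, so $F(H)$ witnesses $F(\tilde f)\sim F(\tilde f')$ and they coincide in $\Ho(\D)$. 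Functoriality follows because $\tilde g\tilde f$ is itself a cofibrant lifting of $gf$. The natural transformation $t:\L F\circ \gamma_\C\to \gamma_\D\circ F$ at $A$ is given by the class of $F(q_A)$ with $q_A:QA\sonto A$, an isomorphism in $\Ho(\D)$ by Ken Brown; for any competing pair $(G,s)$ the required $s'_A:G(A)\to \L F(A)$ is forced to equal $s_{QA}\circ G(\gamma_\C q_A)^{-1}$, which proves existence, uniqueness, and naturality simultaneously.

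Finally, the adjunction is obtained by descent. The functor $F$ preserves cofibrant objects (it sends the initial object to itself and preserves cofibrations), so the classical bijection $\Hom_\D(F(QA),RB)\cong \Hom_\C(QA,G(RB))$ relates maps with cofibrant source and fibrant target on both sides. It descends to $\pi(F(QA),RB)\cong \pi(QA,G(RB))$ because $F$ carries any cylinder on the cofibrant $QA$ to a cylinder on $F(QA)$, while dually $G$ carries any path object on the fibrant $RB$ to a path object on $G(RB)$; the adjunction then identifies left homotopies in $\D$ with right homotopies in $\C$, and for maps with cofibrant source and fibrant target these two notions of homotopy agree. Rewriting as $\Hom_{\Ho(\D)}(\L F(A),B)\cong \Hom_{\Ho(\C)}(A,\R G(B))$, naturally in $A$ and $B$, produces the claimed adjoint pair \eqref{LGF}. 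The main obstacle is precisely this last descent: the asymmetry between cofibrancy and fibrancy in the adjunction bijection must be tracked with care, and it is here that the Quillen-pair assumption on $(F,G)$ — not just adjointness — is indispensable.
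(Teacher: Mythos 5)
Your proof follows the same route the paper points to (Dwyer--Spalinski, \S9, via Ken Brown's lemma), and the argument is essentially correct. One small slip to flag: you claim $F(q_A)$ is an isomorphism in $\Ho(\D)$ ``by Ken Brown,'' but Ken Brown's lemma only applies to weak equivalences \emph{between cofibrant objects}, and $A$ need not be cofibrant; the natural transformation $t:\L F\circ\gamma_\C\to\gamma_\D\circ F$ is in general not invertible. Fortunately your formula $s'_A=s_{QA}\circ G(\gamma_\C q_A)^{-1}$ only uses that $\gamma_\C q_A$ is an isomorphism in $\Ho(\C)$ (true since $q_A$ is a weak equivalence), so the universal-property verification goes through unchanged. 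Similarly, $F(\Cyl(QA))\to F(QA)$ is a weak equivalence between cofibrant objects by Ken Brown but not necessarily a fibration; for the homotopy-descent step one should note either that a weak cylinder suffices, or factor once more to obtain a genuine cylinder in $\D$.
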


\noindent
For a detailed proof of Theorem~\ref{Qthm} we refer to \cite{DS}, Sect.~9; here,
we only mention one useful result on which this proof is based.
\blemma[K.~Brown]
\la{BrL}
If $ F:\,\C \to \D $ carries acyclic cofibrations between
cofibrant objects in $ \C $ to weak equivalences in $\D$,
then $\L F $ exists and is given by formula \eqref{derf}.
\elemma

\vspace{1ex}

\remark\
In the situation of Theorem~\ref{Qthm}, if $\D$ is a fibrant category,
then $ \R G = G $. This follows from the fact that the derived functor
$ \R G $ is defined by applying $ G $ to a fibrant resolution similar
to \eqref{derf}.

\vspace{1ex}

\noindent
\textbf{Example.}
Let $ \C^\D $ be the category of pushout data in a model category $ \C $
(see \ref{2.1.3}). The colimit construction gives a functor
$\,\colim:\,\C^\D \to \C \,$ which is left adjoint to the
diagonal (`constant diagram') functor
$$
\Delta:\,\C \to \C^\D \ , \quad A \mapsto \{A \xleftarrow{\id} A \xrightarrow{\id} A\}\ .
$$
Theorem~\ref{Qthm} applies in this situation giving the adjoint pair
$$
\L \colim:\Ho(\C^\D)\rightleftarrows \Ho(\C):\R \Delta\, .
$$
The functor $ \L \colim $ is called the {\it homotopy pushout functor}. Similarly
one defines the {\it homotopy pullback functor} $\, \R \lim \,$ which is right adjoint
to $\, \L \Delta \,$ (see \cite{DS}, Sect.~10).

\vspace{1ex}

Now, we state the second part of Quillen's Theorem.

\begin{theorem}
\la{Qthm2}
The derived functors \eqref{LGF} associated to a Quillen pair $ (F,G) $ are (mutually inverse)
equivalences of categories if and only if the following condition holds: for each cofibrant object
$ A \in \Ob(\C) $ and  fibrant object $ B \in \Ob(\D) $ a morphism $ f: A \to G(B) $ is a
weak equivalence in $ \C $ if and only if the adjoint morphism $ f^*: F(A) \to B $ is a weak
equivalence in $ \D $.
\end{theorem}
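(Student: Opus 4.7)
The plan is to apply Theorem~\ref{Qthm} to obtain the derived adjunction $\L F\dashv\R G$ on homotopy categories, and then invoke the classical criterion that an adjunction is an equivalence precisely when its unit and counit are isomorphisms. The principal work consists in identifying the unit $\tilde\eta$ and counit $\tilde\epsilon$ of the derived adjunction at cofibrant/fibrant objects and in matching them with (co)fibrant replacement morphisms.

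Fix a cofibrant $A\in\C$ and choose a fibrant replacement $j:F(A)\sinto RF(A)$ in $\D$ (which exists by MC5). Then $\R G\,\L F(A)$ is represented by $G(RF(A))$, and a direct check shows that the derived unit at $A$ is
$$
\tilde\eta_A \ :=\ G(j)\circ\eta_A \ :\ A\longrightarrow G(RF(A)),
$$
where $\eta$ is the unit of the underlying adjunction $(F,G)$. A short computation using naturality of the counit $\epsilon$ and the triangle identities shows that the $(F,G)$-adjoint of $\tilde\eta_A$ is precisely $j$, which is a weak equivalence by construction. Dually, for a fibrant $B\in\D$ with cofibrant replacement $q:QG(B)\sonto G(B)$ in $\C$, the derived counit $\tilde\epsilon_B:F(QG(B))\to B$ has $q$ as its $(F,G)$-adjoint.

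For the ``if'' direction, the stated condition applied to the cofibrant object $A$, the fibrant object $RF(A)$, and the weak equivalence $j$ forces $\tilde\eta_A$ to be a weak equivalence, hence an isomorphism in $\Ho(\C)$. A symmetric argument using $q$ shows $\tilde\epsilon_B$ is an isomorphism in $\Ho(\D)$. Since the derived unit and counit are natural isomorphisms, $\L F$ and $\R G$ form mutually inverse equivalences of homotopy categories.

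For the converse, assume $(\L F,\R G)$ is an equivalence and let $A$ be cofibrant, $B$ fibrant, $f:A\to G(B)$ and $f^*:F(A)\to B$ adjoint morphisms. Since $G$ is a right adjoint it preserves the terminal object and fibrations, so $G(B)$ is fibrant; dually $F(A)$ is cofibrant. Thus the derived adjunction restricted to such objects is represented by the underlying model-level adjunction and sends the class of $f^*$ to the class of $f$. Because an equivalence of categories preserves and reflects isomorphisms, $\gamma_\C(f)$ is an isomorphism in $\Ho(\C)$ iff $\gamma_\D(f^*)$ is an isomorphism in $\Ho(\D)$; combined with the standard fact that a morphism in a model category is a weak equivalence iff $\gamma$ sends it to an isomorphism, this gives the required equivalence. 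The main obstacle is the bookkeeping in the second paragraph: extracting explicit representatives of $\tilde\eta$ and $\tilde\epsilon$ from the construction of the derived adjunction and verifying that their $(F,G)$-adjoints are the chosen (co)fibrant replacements; once this is done, both directions reduce to elementary category-theoretic principles.
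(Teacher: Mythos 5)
The paper states Theorem~\ref{Qthm2} without proof, citing it as a standard result (Quillen's criterion for a Quillen equivalence), so there is no in-paper argument to compare against. Your proof is correct and is the standard one: the adjoint computations are accurate — by naturality of $\epsilon$ and the triangle identity, the $(F,G)$-adjoint of $\tilde\eta_A = G(j)\circ\eta_A$ is exactly the fibrant-replacement map $j\colon F(A)\sinto RF(A)$, and dually the adjoint of $\tilde\epsilon_B = \epsilon_B\circ F(q)$ is the cofibrant-replacement map $q\colon QG(B)\sonto G(B)$ — and both directions reduce correctly to the criterion that an adjunction is an adjoint equivalence iff its unit and counit are natural isomorphisms, combined with the saturation property of model categories (a map is a weak equivalence iff $\gamma$ sends it to an isomorphism). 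Two minor points worth making explicit if you write this up in full: the identification $\Hom_{\Ho(\D)}(\L F(A),B)\cong\pi(F(A),B)$ uses that $F(A)$ is cofibrant (which you note), and the claim that the derived adjunction bijection on homotopy classes agrees with the model-level adjunction descended to $\pi(-,-)$ relies on compatibility of the adjunction with the homotopy relation — a standard but not entirely trivial lemma.
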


\noindent
A Quillen pair  $(F,G) $ satisfying the condition of Theorem~\ref{Qthm2}
is called a {\it Quillen equivalence}.  The fundamental example of a Quillen equivalence is the
geometric realization and the singular set functors relating the categories of simplicial sets and
topological spaces (see \cite{Q1}):
$$
|\,\mbox{--} \,| : \, {\mathtt{SSets}}  \rightleftarrows {\mathtt{Top}}\, : {\mathrm{Sing}}(\,\mbox{--}\,)\ .
$$
We give another well-known example coming from algebra. Recall that if $A$ is a DG algebra, the
category $ \DGMod(A) $ of DG modules over $A$
is abelian and has a natural  model structure, with weak equivalences being the quasi-isomorphisms.
\bprop
\la{Kelpr}
Let $ f: R \to A $ be a morphism of DG algebras. The corresponding induction and restriction functors form a Quillen pair
$$
f^*:\, \DGMod(R)  \rightleftarrows \DGMod(A) \,: f_*
$$
If $ f $ is a quasi-isomorphism, $ (f^*,  \, f_*) $ is a Quillen equivalence.
\eprop
Proposition~\ref{Kelpr} is a special case of a general result about module categories in
monoidal model categories proved in \cite{SS1} (see {\it loc. cit}, Theorem~4.3).

\subsection{Quillen homology}
\la{AQh}
For a category $ \C $, let $ \C^{\rm ab} $ denote the category of abelian objects in $ \C $.
Recall that $ A \in \Ob(\C) $ is an {\it abelian object} if the functor
$\, \Hom_{\C}(\,\mbox{--}\,,\,A) \,$  is naturally an abelian group.
Assuming that $ \C$ has enough limits, this is known to be equivalent to the `diagrammatic'
definition of an abelian group structure on $A $: {\it i.e.},
the existence of multiplication ($m: A \times A \to A $) , inverse ($ \iota:\, A \to A $) and unit
($ \ast \to A $) morphisms in $ \C $, satisfying the usual axioms of an abelian group
(see, e.g., \cite{GM}, Ch.~II, Sect.~3.10).  Note that the forgetful functor $ \, i: \C^{\rm ab} \to \C \,$  is faithful but
not necessarily full. For example,  the abelian objects in the categories $ \mathtt{Sets} $ and $ \mathtt{Groups} $
are the same: namely,  the abelian groups; however, $ \, i: \C^{\rm ab} \to \C \,$ is a full embedding only for
$ \C =  \mathtt{Groups}  $.

Now, let $ \C$ be a model category. Following Quillen (see \cite{Q1}, \S\, II.5),
we assume that  the forgetful functor $\, i: \C^{\rm ab} \to \C $ has a left adjoint $\,\Ab:\, \C \to \C^{\rm ab} $ called
{\it abelianization}, and there is a model structure on $ \C^{\rm ab} $ such that
\begin{equation}
\la{Abi}
\Ab:\, \C\rightleftarrows\C^{\rm ab} \,: i\
\end{equation}
is a Quillen pair. Then, by Theorem~\ref{Qthm}, $\, \Ab $ has a total left derived functor
$\, \L \Ab:\,\Ho(\C) \to \Ho(\C^{\rm ab}) $, which is called the {\it Quillen homology} of $ \C $.
Assume, in addition, that the model structure on $ \C^{\rm ab} $ is  stable, {\it i.e.} there is an invertible
suspension functor $ \,  \Sigma:\,\Ho(\C^{\rm ab}) \to \Ho(\C^{\rm ab}) \,$ making $  \Ho(\C^{\rm ab}) $
a triangulated category ({\it cf.} \cite{Ho}, Sect.~7.1). Then, for any $ X \in \Ob(\C) $ and
$ A \in \Ob(\C^{\rm ab}) $,  we can define the {\it Quillen cohomology of $X$ with coefficients
in $A\,$} by
$$
\H_{\C}^n(X, A) = \Hom_{\Ho(\C^{\rm ab})}(\L\Ab(X),\,\Sigma^{-n} A)\ .
$$
This construction unifies basic (co-)homology theories of spaces, groups and algebras
(see \cite{Q1}, \S~II.5).   We briefly discuss only three well-known examples related to algebras
(see \cite{Q4}).

\vspace{1ex}

\noindent
{\bf Example 1.}\
Let $ \C = \DGL_k $ be the category of DG Lie algebras over $k$.
This category has a natural model structure, with weak equivalences being quasi-isomorphisms
(see \cite{Q2}, Part~II, Sect. 5).  The abelian objects in $ \C $ are just the abelian Lie algebras ({\it i.e.}, the DG Lie algebras with zero
bracket). The category $ \C^{\rm ab} $ can thus be identified with $ \Com(k) $. The abelianization
functor $\, \Ab:  \DGL_k \to \Com(k)  \,$ is given by $\,\g \mapsto \g/[\g,\,\g] \,$. If $ \g $ is an
ordinary Lie algebra, and $ {\mathfrak L} \sonto \g $ is a cofibrant resolution of $ \g $ in $ \DGL_k$, then
\begin{equation}
\la{glie}
\H_n({\mathfrak L}/[{\mathfrak L},\,{\mathfrak L}]) \cong \H_{n+1}(\g,\,k)\ , \quad \forall\,n \ge 0\ .
\end{equation}
Thus, the Quillen homology of $ \g $ agrees with  the usual Lie algebra homology with trivial coefficients.

\vspace{1ex}

\noindent
{\bf Example 2.}
Let $  \DGA_k $ be the category of associative DG algebras\footnote{We will discuss the properties of this category as well as
its commutative counterpart in Section~\ref{3} below.}. Unlike in $ \DGL_k$, the only
abelian object in $ \DGA_k$ is the zero algebra. To get more interesting examples,
we fix an algebra $A \in \Ob(\DGA_k) $ and consider  the category $ \C := \DGA_k/A $ of algebras over $A$.
(So an object in $ \C $ is a DG algebra $B$ given together with  a DG algebra map $ B \to A $.)
In this case, it is easy to show that $ \C^{\rm ab} $ is equivalent to the (abelian) category $ \DGBimod(A) $
of DG bimodules over $A$. The equivalence is given by the semi-direct product construction
\begin{equation}
\la{semdir}
A \ltimes (\,\mbox{--}\,)\,:\  \DGBimod(A) \to \DGA_k/A \ ,
\end{equation}
assigning to a bimodule $M$ the DG algebra $ A \ltimes M $ together with the canonical projection
$ A \ltimes M \onto A $. Note that $ A \ltimes M $ is an abelian object in $ \C $ because
$\, \Hom_{\C}(B, A \ltimes M) \cong \Der_k(B, M) $,
where $ \Der_k(B,M) $ is an abelian group (in fact, a vector space) of $k$-linear derivations
$ \partial: B \to M $. On the other hand, for any $A$-bimodule $ M $,
there is a natural isomorphism
$$
\Der_k(B, M) \cong \Hom_{\DGBimod(A)}(\Omega_k^1(B/A) ,\,  M)\ ,
$$
where $ \Omega_k^1(B/A) := A \otimes_B  \Omega_k^1(B) \otimes_B A $ and $ \Omega_k^1 B $
denotes the kernel of the multiplication map $\, B \otimes B \to B \,$.
Thus, for  $ \C = \DGA_k/A $,  the Quillen pair \eqref{Abi} can be identified with
\begin{equation}
\la{nccot}
\Omega_k^1( \mbox{--}/A):\, \DGA_k/A  \rightleftarrows  \DGBimod(A)\,: A \ltimes (\,\mbox{--}\,)\ .
\end{equation}
If $A$ is an ordinary $k$-algebra, the Quillen homology of $ \C $ essentially coincides with Hochschild homology:
precisely, we have
\begin{equation}
\la{qhdga}
\H_n[\L\Omega_k^1(B/A)] =
\left\{
\begin{array}{lll}
\Omega_k^1(B/A)\ & \mbox{\rm if} &\ n = 0\\*[1ex]
\Tor^B_{n+1}(A, A)\ & \mbox{\rm if} &\ n \ge 1
\end{array}
\right.
\end{equation}
The derived abelianization functor $\,\L\Omega_k^1( A) \,$ evaluated at $ \id_A \in \C $
 is called the {\it noncommutative cotangent complex} of $A$. By \eqref{qhdga}, we simply have
$\,\L\Omega_k^1( A) \cong \Omega_k^1(A)\,$ in $ \Ho(\C) $. Similarly,
the Quillen cohomology of $A$ with coefficients in a bimodule $ M $ can be identified with
Hochschild cohomology of $ M $ (see \cite{Q4}, Proposition~3.6).

\vspace{1ex}

\noindent
{\bf Example 3.} Let $ \cDGA_k $ be the category of commutative DG $ k$-algebras.
As in the case of associative algebras, for any $ A \in \Ob(\cDGA_k) $, the semi-direct
product construction defines a  fully faithful functor
$$
A \ltimes (\,\mbox{--}\,)\,:\  \DGMod(A) \to \cDGA_k/A \  ,
$$
whose image is the subcategory of abelian objects in $ \cDGA_k/A $. The Quillen pair
\eqref{Abi} is then identified with
\begin{equation}
\la{ccot}
\Omega_{\rm com}^1( \mbox{--}/A):\, \cDGA_k/A  \rightleftarrows  \DGMod(A)\,: A \ltimes (\,\mbox{--}\,)\ .
\end{equation}
Here the abelianization functor is given by $\, \Omega_{\rm com}^1(B/A) := A \otimes_B  \Omega_{\rm com}^1(B) \,$,
where $ \Omega_{\rm com}^1(B) $ is
the module of K\"ahler differentials of the commutative $k$-algebra $B$.
The corresponding derived functor $ \L \Omega_{\rm com}^1(A) $ evaluated at
the identity morphism of $A$ is usually denoted $ \mathbb{L}_{k\bs A} $
and called the {\it cotangent complex} of $ A $. By definition, this is an object
in the homotopy category $ \Ho(\cDGA_k) $, which can be computed by the formula
$  \mathbb{L}_{k\bs A} \cong A \otimes_R  \Omega_{\rm com}^1(R) $, where
$ R \sonto A $ is a cofibrant resolution of $ A $. The homology of the cotangent
complex
$$
D_q(k\bs A) := \H_q(\mathbb{L}_{k\bs A}) \cong
\H_q[A \otimes_R  \Omega_{\rm com}^1(R)]
$$
is called the  {\it Andr\'e-Quillen homology} of $A$. More generally,
the Andr\`e-Quillen homology  with coefficients in an arbitrary module
$ M \in \DGMod(A) \,$ is defined by
\begin{equation}
\la{AnQh}
D_q(k\bs A, M) := \H_q(\mathbb{L}_{k\bs A} \otimes_A M) \ .
\end{equation}
Taking the Hom complex with $ \mathbb{L}_{k\bs A} $ instead of tensor product
defines the corresponding cohomology.  The  construction of Andr\`e-Quillen (co-)homology theory
was historically the first real application of model categories. The original paper of Quillen \cite{Q4} seems still to be the
best exposition of foundations of this theory. Many interesting examples and applications can be found in the
survey paper \cite{I}.

\subsection{Differential graded algebras}
\la{3}
By a {\it DG algebra} we mean a $\Z$-graded unital associative $k$-algebra equipped with a
differential of degree $-1$. We write $ \DGA_k $ for the
category of all such algebras and denote by $ \cDGA_k $ the full subcategory
of $ \DGA_k $ consisting of commutative DG algebras. On these categories,
there are standard model structures which we describe in the next theorem.

\begin{theorem}
\la{modax}
The categories $ \DGA_k $ and $ \cDGA_k$ have model structures in which

(i)\ the weak equivalences are the quasi-isomorphisms,

(ii)\ the fibrations are the maps which are surjective in all degrees,

(iii)\ the cofibrations are the morphisms having the left-lifting property
with respect to acyclic fibrations ({\it cf.} MC4).

\noindent
Both categories $ \DGA_k $ and $ \cDGA_k $ are fibrant, with the initial object $ k $ and
the terminal $0$.
\end{theorem}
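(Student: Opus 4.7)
The plan is to verify the axioms $\mathrm{MC}1$--$\mathrm{MC}5$ by Quillen's cofibrantly-generated strategy, transferring a model structure from the standard model category of complexes of $k$-vector spaces along the free (respectively free graded commutative) algebra functor. Let $S^n$ denote the complex concentrated in degree $n$ with value $k$, and $D^n$ the acyclic complex with $k$ in degrees $n$ and $n-1$ and identity differential. Taking the free (resp.\ $\bL$) algebra on the inclusions $S^{n-1}\into D^n$ produces a set $I$ of generating cofibrations, while applying the same functor to $0 \into D^n$ produces a set $J$ of generating acyclic cofibrations. A direct adjunction computation shows that the maps with right-lifting property against $I$ are exactly the degreewise surjective quasi-isomorphisms, and the maps with right-lifting property against $J$ are exactly the degreewise surjections; this is the bridge that identifies the classes declared in (ii) and (iii) with those produced by the small object argument.

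The axioms $\mathrm{MC}1$--$\mathrm{MC}3$ are essentially routine. For $\mathrm{MC}1$, limits are computed on underlying graded vector spaces, while colimits exist because the free (resp.\ graded symmetric) algebra functor is a left adjoint, so coproducts are free products (resp.\ tensor products) and pushouts are built from coequalisers. $\mathrm{MC}2$ reduces to the two-out-of-three property of quasi-isomorphisms of complexes via the forgetful functor, and $\mathrm{MC}3$ is immediate: quasi-isomorphisms and degreewise surjections are closed under retracts, and cofibrations are retract-closed because they are defined by a lifting property. The terminal object is the zero algebra $0$, and any morphism $A \to 0$ is trivially surjective in each degree, so every object is fibrant; the unit map $k\to A$ is the unique morphism making $k$ initial.

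The substance of the proof lies in the factorisation axiom $\mathrm{MC}5$, obtained by the small object argument applied to $I$ and $J$. For $\mathrm{MC}5(\text{ii})$, attaching a cell along a map in $J$ freely adjoins a pair of generators $(x,\,dx)$ forming a contractible summand in the underlying complex, so such a pushout is an acyclic cofibration; transfinite composition preserves both properties, and the resulting factorisation $A\sinto Y\onto X$ is of the required form. Factorisation $\mathrm{MC}5(\text{i})$ is analogous using $I$, and once $\mathrm{MC}5$ is in place, $\mathrm{MC}4$ follows formally from the characterisation of (acyclic) cofibrations by the lifting property. The main obstacle---and the sole place where characteristic zero is actually needed---is showing that pushouts and transfinite compositions along $J$-cells remain quasi-isomorphisms in $\cDGA_k$: this requires that $\bL$ preserve quasi-isomorphisms of free objects, which in characteristic zero holds because the graded symmetric powers of an acyclic complex are acyclic (a failure in positive characteristic, where this theorem would break down). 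The remaining verification---smallness of the domains of $I$ and $J$ with respect to transfinite compositions of cofibrations---is standard, and with it the small object argument delivers the required functorial factorisations, completing the proof.
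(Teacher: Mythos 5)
Your proposal is correct in substance, but it is worth noting that the paper does not actually supply a proof of Theorem~\ref{modax}: it delegates to Hinich \cite{H} (Theorem~4.1.1 and Remark~4.2), to Jardine \cite{J} for $\DGA_k$, and to Schwede--Shipley \cite{SS1} for the monoidal-category version. Your sketch is precisely the transfer/small-object argument those references carry out: generating (acyclic) cofibrations obtained by applying the free functor $T_k$ (resp.\ $\bL$) to the standard maps $S^{n-1}\into D^n$ and $0\into D^n$ in $\Com(k)$, the adjunction identifying $I$-injectives with surjective quasi-isomorphisms and $J$-injectives with surjections, the acyclicity of $J$-cell attachments (where flatness of $D^n$ over a field handles $\DGA_k$, and characteristic zero is needed for $\cDGA_k$ so that $\bL$ sends acyclic complexes to algebras quasi-isomorphic to $k$), and the retract argument to recover MC4. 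So you are, in effect, reproducing the content of the cited references rather than proving the theorem by a different route.

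Two small points to tidy. First, you have the two halves of the factorisation axiom swapped relative to the paper's convention: in the paper MC5(i) is $A\sinto B\onto X$ (acyclic cofibration, fibration), which is what the small object argument with $J$ gives, while MC5(ii) is $A\into Y\sonto X$ (cofibration, acyclic fibration), which comes from $I$; you have attached the opposite labels. Second, the existence of colimits in $\DGA_k$ and $\cDGA_k$ is not a consequence of the free-algebra functor being a left adjoint (that gives you preservation, not existence); the correct reason is that both are categories of algebras over a finitary monad on the cocomplete category $\Com(k)$, and such categories inherit all small colimits (coproducts as free products resp.\ tensor products, and coequalisers built from reflexive coequalisers). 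Neither slip affects the validity of the argument.
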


\ni
Theorem~\ref{modax} is a special case of a general result of Hinich on model structure on
categories of algebras over an operad (see \cite{H}, Theorem~4.1.1 and Remark 4.2).
For $ \DGA_k$, a detailed proof can be found in \cite{J}. Note that the
model structure on $ \DGA_k $ is compatible with the projective model structure on the category
$ \Com_k $ of complexes. Since a DG algebra is just an algebra object (monoid) in $ \Com_k $,
Theorem~\ref{modax} follows also from \cite{SS1} (see {\it op. cit.}, Sect.~5).

It is often convenient to work with non-negatively graded DG algebras.
We denote the full subcategory of such DG algebras by $ \DGA_k^+ $ and the corresponding
subcategory of commutative DG algebras by $ \cDGA_k^+ $.
We recall that a DG algebra $\, R \in \DGA_k^+ \,$ is called {\it semi-free} if
its underlying graded algebra $ R_\# $ is free ({\it i.e.}, $ R_\# $ is
isomorphic to the tensor algebra $ T_k V $  of a graded $k$-vector space $V$).
More generally, we say that a DG algebra map $ f: A \to B $ in $ \DGA_k^+ $
is a  {\it semi-free extension} if there is an isomorphism $\,B_\# \cong A_\#  \amalg T_k V\,$
of underlying graded algebras such that the
composition of $ f_\# $ with this isomorphism is the canonical map $\,A_\# \into A_\#  \amalg T_k V \,$.
Here, $\, \amalg \,$ denotes the coproduct (free product) in the category of graded algebras over $k$.

Similarly, a commutative DG algebra $ S \in \cDGA_k^+ $ is called {\it semi-free}
if $\, S_\# \cong \bL_k V \,$ for some graded vector space $V$. A morphism
$ f: A \to B $ in $ \cDGA_k^+ $ is an (semi) {\it free extension} if $ f_\# $ is
isomorphic to the canonical map $\,A_\# \into A_\# \otimes \bL_k V  \,$.

\begin{theorem}
\la{modax1}
The categories $ \DGA_k^+ $ and $ \cDGA_k^+ $ have model structures in which

(i)\ the weak equivalences are the quasi-isomorphisms,

(ii)\ the fibrations are the maps which are surjective in all {\rm positive} degrees,

(iii)\ the cofibrations are the retracts of semi-free algebras ({\it cf.}\,MC3).

\noindent
Both categories $ \DGA_k^+ $ and $ \cDGA_k^+ $ are fibrant, with the initial object $ k $ and
the terminal $0$.
\end{theorem}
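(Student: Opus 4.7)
The plan is to transfer a model structure from the underlying category of non-negatively graded complexes $\Com_k^+$ along the free-forgetful adjunction, in the spirit of Hinich's general result (\cite{H}, Thm.~4.1.1). Recall from Section~\ref{2.1} that $\Com_k^+$ carries a projective model structure whose weak equivalences are quasi-isomorphisms, whose fibrations are maps surjective in positive degrees, and whose cofibrations are monomorphisms with degreewise projective cokernel. The free algebra functor $T_k(-):\Com_k^+\to\DGA_k^+$ is left adjoint to the forgetful functor; in characteristic zero (which is standing in this paper), the free commutative functor $\bL_k(-):\Com_k^+\to\cDGA_k^+$ is likewise a left adjoint. Declaring a morphism in $\DGA_k^+$ (resp. $\cDGA_k^+$) to be a weak equivalence or fibration iff it is such in $\Com_k^+$ gives candidate classes $(i)$ and $(ii)$, with $(iii)$ defined by the left-lifting property.

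Axiom MC1 is immediate since $\DGA_k^+$ and $\cDGA_k^+$ admit all small limits (created by the forgetful functor) and all small colimits (constructed via coequalizers of free algebras); MC2 and MC3 are formal because the classes are detected on the underlying complex. For MC4 and MC5, I would carry out the small object argument using the generating cofibrations and generating acyclic cofibrations obtained by applying $T_k$ (or $\bL_k$) to the standard generators of $\Com_k^+$: the maps $S^{n-1}\hookrightarrow D^n$ for $n\ge 1$ together with $0\to k$ in degree $0$ (generating cofibrations), and the maps $0\to D^n$ for $n\ge 1$ (generating acyclic cofibrations). Transfinite pushouts of these generators along arbitrary maps produce precisely the semi-free extensions of algebras adjoining generators in positive degree (for the acyclic ones) or in any degree $\ge 0$ (for the cofibrations). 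The small object argument then gives the two functorial factorizations required by MC5, and a retract argument identifies cofibrations with retracts of semi-free extensions, yielding (iii).

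The main obstacle is showing that the transferred classes really form a model structure, which via Quillen's path-object / transfer lemma reduces to: every pushout of a generating acyclic cofibration is a weak equivalence. In concrete terms, if $A\in\DGA_k^+$ and $B=A\amalg T_k(D^n)$ (or $A\otimes\bL_k(D^n)$ in the commutative case), one must check $A\to B$ is a quasi-isomorphism. This is where the restriction to non-negative degrees is essential: the adjoined generator sits in degree $n\ge 1$ and its differential lands in degree $n-1\ge 0$, so using a PBW-type filtration on the coproduct one identifies the associated graded with $A\otimes T_k(D^n)$ (resp.\ $A\otimes \bL_k(D^n)$), and the contractibility of $D^n$ combined with the K\"unneth theorem over the field $k$ of characteristic zero gives the required quasi-isomorphism. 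Characteristic zero is needed for the commutative case because the symmetric algebra functor $\bL_k$ preserves quasi-isomorphisms between non-negatively graded complexes only in that setting.

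Finally, fibrancy of both categories is automatic: the terminal object is the zero algebra $0$, and every morphism $A\to 0$ is vacuously surjective in positive degrees. The initial object is $k$, with no further condition required. The characterization of cofibrations in (iii) then follows from the small object argument together with the standard fact that retracts of relative cell complexes exhaust the class of maps with the left-lifting property against acyclic fibrations.
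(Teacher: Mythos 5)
Your proposal is essentially correct, and it supplies what the paper itself does not: the paper offers no proof of Theorem~\ref{modax1} but simply cites \cite{BG}, \cite{GM} (Ch.~V) and \cite{M}, where the axioms are verified by hand. Your route --- transferring the projective model structure on $\Com_k^+$ along the free--forgetful adjunction via the small object argument, with generating (acyclic) cofibrations obtained by applying $T_k$ or $\bL_k$ to $\{S^{n-1}\hookrightarrow D^n\}_{n\ge 1}\cup\{0\to k\}$ and $\{0\to D^n\}_{n\ge 1}$ --- is the modern repackaging of what those references do step by step (Sullivan/KS extensions in \cite{BG}, the analogous associative construction in \cite{M}), and you correctly pinpoint both where the transfer hypothesis lives (pushouts of $k\to T_k(D^n)$, resp.\ $k\to\bL_k(D^n)$, must be quasi-isomorphisms) and where characteristic zero enters (acyclicity of $\bL_k(D^n)$).

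Two small imprecisions are worth fixing. First, the associated graded of the word-length filtration on $A\amalg T_k(D^n)$ is not $A\otimes T_k(D^n)$: the free product is much larger, and what one actually gets is $\bigoplus_{m\ge 0}A\otimes\bigl(\overline{T_k(D^n)}\otimes A\bigr)^{\otimes m}$ (or, filtering by the number of adjoined letters, $\bigoplus_{m\ge 0}A^{\otimes(m+1)}\otimes(D^n)^{\otimes m}$) with the Leibniz differential. This does not affect the conclusion, since $\overline{T_k(D^n)}$ and $D^n$ are acyclic over the field $k$, so by the K\"unneth theorem all the $m\ge 1$ summands are acyclic and $A\to A\amalg T_k(D^n)$ is a quasi-isomorphism; but the identification should be stated correctly. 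Second, you assert without comment that relative cell complexes built from the generators coincide with semi-free extensions in the sense of Section~\ref{3}; this is true but deserves a sentence: in the non-negatively graded case any free graded extension equipped with a compatible differential is automatically semi-free, because a generator in homological degree $n$ has differential of degree $n-1$, so re-filtering the cell attachment by homological degree exhibits it as a semi-free extension. With these two glosses your sketch is complete.
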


The model structure on $ \cDGA_k^+ $ described in Theorem~\ref{modax1} is a `chain' version
of a well-known model structure on the category of commutative cochain DG algebras. This
last structure plays a prominent role in rational homotopy theory and the verification of
axioms for $ \cDGA_k^+ $ can be found in many places (see, e.g., \cite{BG} or \cite{GM}, Chap.~V).
The model structure on $ \DGA_k^+ $ is also well known: a detailed proof of Theorem~\ref{modax1} for
$ \DGA_k^+ $ can be found in \cite{M}. The assumption that $ k $ has characteristic $0$ is essential
in the commutative case: without this assumption,$\, \cDGA_k^+ $ is not (Quillen) equivalent
the model category of simplicial commutative $k$-algebras. On the other hand, it is known that the model
category $ \DGA_k^+ $  is Quillen equivalent to the  model category of simplicial associative
$k$-algebras over an arbitrary commutative ring $k$ (see \cite{SS2}, Theorem~1.1).

\subsection{DG schemes}
\la{2.8}
Working with commutative DG algebras it is often convenient to use the dual geometric language of DG schemes.
In this section, we briefly recall basic definitions and facts about DG schemes needed for the
present paper. For more details, we refer to \cite{CK}, Section~2. We warn the reader that, unlike  \cite{CK},
we use the homological notation:  all our complexes and DG algebras have differentials of degree $-1$.

A {\it DG scheme} $ X = (X_0,\, \O_{X, \bullet}) $ is an ordinary $k$-scheme $ X_0 $ equipped
with a quasicoherent sheaf  $ \O_{X, \bullet} $ of non-negatively graded commutative DG algebras such
that $ \O_{X, 0} = \O_{X_0} $. A DG scheme is called {\it affine} if $ X_0$ is affine; the category of
affine DG schemes is (anti-)equivalent to $ \cDGA_k^+ $. Since $ \O_{X, \bullet} $ is non-negatively graded,
the differential $ d $ on  $ \O_{X, \bullet} $ is linear over $   \O_{X_0} $, and
$\,\H_0(\O_{X, \bullet}) = \O_{X_0}/d \O_{X,1} \,$ is the quotient of $ \O_{X_0} $. We  write
$\,\pi_0(X) := \Spec\, \H_0(\O_{X, \bullet})  \,$ and identify $ \pi_0(X) $ with a closed subscheme of $ X_0 $.

A DG scheme $ X $ is called {\it smooth} (or a {\it DG manifold}) if $ X_0 $ is a smooth variety, and
$ \O_{X, \bullet } $ is locally isomorphic (as a sheaf of graded $ \O_{X_0} $-algebras) to the graded
symmetric algebra
$$
\O_{X, \#}  =  \bL_{\O_{X_0}}(E_{\#})
$$
where $\, E_{\#} = \oplus_{i \ge 1} E_i \,$ is a graded $ \O_{X_0} $-module whose components
$ E_i $ are finite rank locally free sheaves on $ X_0 $.  (Note that we do not require $ E_\# $ to be
bounded, {\it i.e.} $ E_i $ may be nonzero for infinitely many $i$'s.)

Now, given a DG scheme $ X $ and a closed $k$-point $ x \in X_0 $, we define the {\it DG tangent space}
$\,(T_x X)_\bullet \,$ at $ x $ to be the derivation complex
\begin{equation}
\la{derc}
(T_x X)_\bullet := \DER(\O_{X, \bullet}, k_x)\ ,
\end{equation}
where $ k_x $ is the $1$-dimensional DG $\, \O_{X, \bullet}$-module corresponding
to $ x $. The homology groups of this complex are denoted
\begin{equation}
\la{dtang}
\pi_i(X, x) := \H_i(T_x X)\ ,\quad i \ge 0\ ,
\end{equation}
and called the {\it derived tangent spaces} of $ X $ at $x$. A morphism $ f: X \to Y $ of DG schemes
induces a morphism of complexes $\,(d_x f)_\bullet: \,(T_x X)_\bullet  \to (T_{y} Y)_\bullet \,$, and
hence linear maps
\begin{equation}
\la{mapd}
(d_x f)_i:\, \pi_i(X, x) \to \pi_i(Y, y)\ ,
\end{equation}
where $ y = f(x) $. Dually,  the {\it DG cotangent space} $ (T^*_x X)_\bullet  $ at a point $ x \in X_0 $
is defined by taking the complex of K\"ahler differentials:
\begin{equation*}
(T^*_x X)_\bullet := \Omega^1_{\rm com}(\O_{X, \bullet})_x = \m_x/\m_x^2 \ ,
\end{equation*}
where $ \m_x \subset \O_x $ is the maximal DG ideal corresponding to $x$.
The topological notation  \eqref{dtang} for the derived tangent spaces is justified by the following proposition,
which is analogous to the Whitehead Theorem in classical topology.
\bprop[\cite{Ka}, Proposition~1.3]
\la{Kapp}
Let $ f: X \to Y $ be a morphism of smooth DG schemes.
Then $ f $ is a quasi-isomorphism if and only if

$(1)$ $\,\pi_0(f):\,\pi_0(X) \stackrel{\sim}{\to} \pi_0(Y) \,$ is an isomorphism of schemes,

$(2)$ for every closed point $ x \in X_0 $, the differential $ d_x f $ induces linear isomorphisms
$$
\pi_i(X,x)  \stackrel{\sim}{\to} \pi_i(Y, f(x))\ , \quad \forall\, i  \ge 0 \ .
$$
\eprop
The proof of Proposition~\ref{Kapp} is based on the next lemma
which is of independent interest (see \cite{CK2}, Sect.~2.2.3).
\blemma
\la{CKlem}
Let $ X = (X_0,\, \O_{X, \bullet})  $ be a smooth DG scheme, and let
$ \hat{\O}_{X,x} := \O_{X, \bullet} \otimes_{\O_{X_0}}  \hat{\O}_{X_0,x} $
denote the complete local DG ring corresponding to a closed $k$-point $ x \in \pi_0(X) $.
Then there is a convergent spectral sequence
\begin{equation}
\la{CKsp}
E^2 = \bL^\bullet  [\H_\bullet(T^*_x X)] \ \Rightarrow\ \H_\bullet(\hat{\O}_{X,x})
\end{equation}
arising from the $ \m_x$-adic filtration on $  \hat{\O}_{X,x} $.
\elemma
Crucial to the proof of Lemma~\ref{CKlem} is the fact that $ \hat{\O}_{X,x} $
coincides with the completion of $ \O_{X,x}$ with respect to the $\m_x$-adic topology.
If $f: X \to Y $ satisfies the conditions $(1)$ and $(2)$ of Proposition~\ref{Kapp},
for any $ x \in \pi_0(X) $, the map $ \hat{f}_x: \hat{\O}_{Y,y} \to \hat{\O}_{X,x}  $
induces a quasi-isomorphism between $E^2$-terms of the spectral sequences \eqref{CKsp}
associated to the local rings $\hat{\O}_{X,x} $ and  $  \hat{\O}_{Y, y} $. Since these local
rings are complete, the Eilenberg-Moore Comparison Theorem implies that $ \hat{f}_x$ is a
quasi-isomorphism. By Krull's Theorem, the map $ f $ itself is then a quasi-isomorphism.

\section{Representation Schemes}
\la{S2}
In this section, we extend the  representation functor \eqref{rep}  to
the category of DG algebras. We show that such an extension defines a representable functor
which is actually a  left Quillen functor in the sense of Lemma~\ref{Qpair}.
A key technical tool is the universal construction of `matrix reduction', which (in the case of ordinary
associative algebras) was introduced and studied in \cite{B} and \cite{C}. The advantage of this
construction is that it produces the representing object for \eqref{rep} in a canonical form as
a result of application of some basic functors on the category of algebras.

\subsection{DG representation functors}
\la{S2.1}
Let $ S \in \DGA_k $ be a DG algebra, and let $ \DGA_S $ denote the category of DG algebras over $S$.
By definition, the objects of $ \DGA_S $ are the DG algebra maps $\,S \to A \,$ in $ \DGA_k $
and the morphisms are given by the commutative triangles
\[
\begin{diagram}[small, tight]
  &       &     S     &        & \\
  &\ldTo  &           &  \rdTo &  \\
A &       & \rTo^{f}  &        & B
\end{diagram}
\]
We will write a map $\, S \to A \,$ as $\, S \bs A \,$, or simply $A$, when we regard it
as an object in $ \DGA_S $. For $ S \in \Alg_k $, we also introduce the category  $ \Alg_S $
of ordinary $S$-algebras ({\it i.e.}, the category of morphisms $ S \to A $ in $\Alg_k$)
and identify it with a full subcategory of $ \DGA_S $ in the natural way.

Let $ (V, d_V) $ be a complex of $k$-vector spaces of finite (total) dimension, and
let $ \END\,V $ denote its graded endomorphism ring with  differential
$\, df = d_V f - (-1)^{i}f \,d_V \,$, where  $ f \in \END(V)_i\,$. Fix on $ V $ a
DG $S$-module structure, or equivalently, a DG representation $\,S \to \END\,V \,$.
This makes $ \END\,V $ a DG algebra over $S$, {\it i.e.} an object of $ \DGA_S $.
Now, given a DG algebra $ A \in \DGA_S $, an $S$-representation of $A$ in $V$ is, by definition,
a morphism $\,A \to \END\,V\,$ in $ \DGA_S $. Such representations form an affine DG scheme
which is defined as the functor on the category of commutative DG algebras:
\begin{equation}
\la{S2E1}
\Rep_V(S \bs A):\ \cDGA_{k} \to \Sets\ ,\quad C \mapsto \Hom_{\DGA_S}(A,\, \END\,V \otimes C)\ .
\end{equation}
Our proof of representability of \eqref{S2E1} is based on the following simple observation.
Denote by $ \DGA_{\End(V)} $ the category of DG algebras over $ \END\,V \,$ and consider the natural functor
\begin{equation}
\la{S2E3}
\G:\, \DGA_{k} \to \DGA_{\End(V)}\ ,\quad B \mapsto \END\,V \otimes B \ ,
\end{equation}
where $ \END\,V \otimes B $ is viewed as an object in $ \DGA_{\End(V)} $ via the
canonical map $\,\END\,V \to \END\,V \otimes B\,$.
\blemma
\la{S2L1}
The functor \eqref{S2E3} is an equivalence of categories.
\elemma
For a detailed proof we refer to  \cite{BKR}, Lemma~2.1. Here we only note that
the inverse functor  to  \eqref{S2E3}  is given by
\begin{equation}
\la{S2E2}
\G^{-1}:\, \DGA_{\End(V)} \to \DGA_k\ ,\quad (\END\,V \to A) \mapsto A^{\,\END(V)}\ ,
\end{equation}
where $ A^{\,\END(V)}\,$ is the (graded) centralizer of the image of $ \END\,V $ in $A\,$.

Next, we introduce the following functors
\begin{eqnarray}
&& \rtv{\,\mbox{--}\,}:\, \DGA_S \to \DGA_k\ ,\quad S \bs A \mapsto
(\END\,V \amalg_S A)^{\END(V)}\ ,\la{S2E9} \\*[1ex]
&& (\,\mbox{--}\,)_V:\, \DGA_S \to \cDGA_k\ ,\quad S \bs A \mapsto (\rtv{S \bs A})_\nn\ , \la{S2E10}
\end{eqnarray}
where $\,\amalg_S \,$ denotes the coproduct in the category $ \DGA_S $ and
$\,(\mbox{--})_\nn:\,\DGA_k \to \cDGA_k \,$ stands for `commutativization', {\it i.e.}
taking the quotient of a DG algebra $R$ by its two-sided commutator ideal:
$\,R_\nn := R/\langle[R, R]\rangle\,$. The following proposition is an easy consequence
of Lemma~\ref{S2L1}.
\bprop
\la{S2P1}
For any $ S\bs A \in \DGA_S $, $ B \in \DGA_k $ and $ C \in \cDGA_k $, there are natural bijections

\vspace{0.8ex}

$(a)$ $\,\Hom_{\DGA_k}(\rtv{S\bs A},\,B) \cong \Hom_{\DGA_S}(A,\,\END\,V \otimes B) \,$,

\vspace{0.8ex}

$(b)$ $\, \Hom_{\cDGA_k}((S\bs A)_V,\,C) \cong \Hom_{\DGA_S}(A,\,\END\,V \otimes C) \,$.

\eprop
\begin{proof}
The tensor functor $\, B \mapsto \END\,V \otimes B \,$ in $(a)$ can be formally written as the
composition
\begin{equation}\la{compf}
\DGA_k \xrightarrow{\G} \DGA_{\End(V)} \xrightarrow{\F} \DGA_S\ ,
\end{equation}
where $\G$ is defined by \eqref{S2E3} and $ \F $ is the restriction functor via
the given DG algebra map $ S \to \END\,V $. Both
$ \F $ and $ \G $ have natural left adjoint functors: the left adjoint of
$ \F $ is obviously the coproduct $\,A \mapsto \END\,V \amalg_S A \,$, while the left
adjoint of $ \G $ is $ \G^{-1} $, since $ \G $ is an equivalence of categories
(Lemma~\ref{S2L1}). Now, by definition, the functor $ \rtv{\,\mbox{--}\,} $ is the
composition of these left adjoint functors and hence the left adjoint to the
composition \eqref{compf}. This proves part $(a)$. Part $(b)$ follows from $(a)$ and
the obvious fact that the commutativization functor $\,(\mbox{--})_\nn:\,\DGA_k \to \cDGA_k \,$
is left adjoint to the inclusion $ \iota:\,\cDGA_k \into \DGA_k $.
\end{proof}

Part $(b)$ of Proposition~\ref{S2P1} can be restated in the following way, which shows that
$ \Rep_V(S \bs A) $ is indeed an affine DG scheme in the sense of Section~\ref{2.8}.
\bthm
\la{S2T1}
For any $ S\bs A \in \DGA_S $, the commutative DG algebra $ (S \bs A)_V  $ represents the functor \eqref{S2E1}.
\ethm

The algebras $\, \rtv{S \bs A} $, $\, (S \bs A)_V \,$ and the isomorphisms of Proposition~\ref{S2P1}
can be described explicitly. To this end, we choose a linear basis $\,\{v_i\}\,$ in $\, V \,$ consisting of homogeneous elements,
and define the elementary endomorphisms $ \{e_{ij}\} $ in $ \END\,V $ by $ e_{ij}(v_k) = \delta_{jk} v_i $.
These endomorphisms are homogeneous, the degree of $ e_{ij} $ being $\, |v_i| - |v_j|\,$,
and satisfy the obvious relations
\begin{equation}
\la{S2E5}
\sum_{i=1}^d e_{ii} = 1 \ , \qquad e_{ij}\,e_{kl} = \delta_{jk}\, e_{il}\ ,
\end{equation}
where $ d := \dim_k V $. Now, for each homogeneous element $ a \in \END\,V \amalg_S A $, we define
its `matrix' elements by
\begin{equation}
\la{S2E6}
a_{ij} := \sum_{k=1}^d\, (-1)^{(|a|+|e_{ji}|) |e_{jk}|}\, e_{ki}\,a\,e_{jk} \ ,\quad i,\,j = 1,\,2,\,\ldots\,,\,d\ .
\end{equation}
A straightforward calculation using \eqref{S2E5} shows that $\,[a_{ij}, e_{kl}] = 0 \,$ for
all $\, i, j, k, l = 1, 2,\ldots, d\,$. Since $ \{e_{ij}\} $ spans $\, \END\,V \,$, this means that
$\, a_{ij} \in  \rtv{S \bs A} $, and in fact, it is easy to see that every homogeneous
element of $ \rtv{S \bs A} $ can be written in the form \eqref{S2E6}. By Lemma~\ref{S2L1},
the map
\begin{equation}
\la{S2E8}
\psi \, :\ \END\,V \amalg_S A \ \to \ \END\,V \otimes \rtv{S \bs A}\ ,
\quad a \mapsto \sum_{i,j = 1}^d \,e_{ij} \otimes a_{ij}
\end{equation}
is a DG algebra isomorphism which is inverse to the canonical (multiplication) map
$$
\END\,V \otimes \rtv{S \bs A} \to \END\,V \amalg_S A \ .
$$
Using \eqref{S2E8}, we can now write the bijection of Proposition~\ref{S2P1}$(a)$:
$$
\Hom_{\DGA_k}(\rtv{S\bs A},\,B) \to \Hom_{\DGA_S}(A,\, \END\,V \otimes B)\ ,\quad
f \mapsto (\id \otimes f) \circ \psi|_A\ ,
$$
where $ \psi|_A $ is the composition of \eqref{S2E8} with the canonical map $ A \to \END\,V \amalg_S A \,$.
As the algebra $ (S \bs A)_V $ is, by definition, the maximal commutative quotient of  $ \rtv{S \bs A}  $, it is also
spanned by the elements \eqref{S2E6} taken modulo the commutator ideal.

\vspace{1ex}

\remark\
For ordinary $k$-algebras, Proposition~\ref{S2P1} and Theorem~\ref{S2T1} were originally proven in \cite{B} (Sect.~7)
and \cite{C} (Sect.~6). In these papers, the functor \eqref{S2E9} was called the `matrix reduction' and a
different notation was used. Our notation $\, \rtv{\mbox{--}} \,$ is borrowed from \cite{LBW}, where
\eqref{S2E9} is used for constructing noncommutative thickenings of classical representation schemes.

\subsection{Deriving the representation functor}
\la{S2.2}
As explained in Section~\ref{3}, the categories $ \DGA_k $ and $ \cDGA_k $ have natural
model structures, with weak equivalences being the quasi-isomorphisms. Furthermore, for a fixed DG algebra $S$,
the category of $ S$-algebras, $\, \DGA_S $, inherits a model structure from $ \DGA_k $ ({\it cf.} \ref{2.1.2}).
Every DG algebra  $\, S \bs A \in \DGA_S \,$ has a cofibrant resolution $\, Q(S \bs A) \sonto S \bs A \,$ in $ \DGA_S $,
which is given by a factorization $\,S \into Q \sonto A \,$ of the morphism $ S \to A $ in $ \DGA_k $.  By
Theorem~\ref{Tloc}, the homotopy  category $ \Ho(\DGA_S) $ is equivalent to the localization of
$ \DGA_S $ at the class of weak equivalences in $ \DGA_S $.
We denote the corresponding localization functor by $\,\gamma:\,\DGA_S \to \Ho(\DGA_S)\,$;
this functor acts as identity on objects while maps each morphism
$ f: S \bs A \to  S \bs B $ to the homotopy class of its cofibrant lifting
$ \tilde{f}: Q(S \bs A) \to Q(S \bs B) $  in $\DGA_S \,$, see \eqref{2.2.4}. The next theorem is one of
the main results of \cite{BKR} (see {\it loc. cit.}, Theorem~2.2).
\bthm
\la{S2T2}
$(a)$ The functors $\,(\,\mbox{--}\,)_V :\,\DGA_S \rightleftarrows \cDGA_k\,: \END\,V \otimes {\mbox{--}}\,$
form a Quillen pair.

$(b)$ $\, (\,\mbox{--}\,)_V $ has a total left derived functor defined by
$$
\L(\,\mbox{--}\,)_V:\,\Ho(\DGA_S) \to \Ho(\cDGA_k) \ ,
\quad S \bs A  \mapsto  Q(S \bs A)_V\ , \quad \gamma f \mapsto \gamma(\tilde f_V)\ .
$$

$(c)$ For any $\, S \bs A \in \DGA_S \,$ and $ B \in \cDGA_k $, there is a canonical isomorphism
\begin{equation*}
\la{S2E15}
\Hom_{\Ho(\cDGA_k)}(\L(S \bs A)_V,\, B) \cong
\Hom_{\Ho(\DGA_S)}(A,\,\END\,V \otimes B)\ .
\end{equation*}
\ethm
\bproof
By Proposition~\ref{S2P1}$(b)$, the functor $\,(\,\mbox{--}\,)_V \,$ is left adjoint to the composition
$$
\cDGA_k \stackrel{\iota}{\into} \DGA_k \xrightarrow{\END\,V \otimes\,-\,} \DGA_S \ ,
$$
which we still denote $\,\END\,V \otimes {\mbox{--}}\ $.
Both the forgetful functor $ \iota $ and the tensoring with
$ \END\,V $ over a field are exact functors on $ \Com_k $; hence, they map fibrations (the surjective morphisms in $ \DGA_k $) to fibrations and also preserve the class of weak
equivalences (the quasi-isomorphisms). It follows that $\,\END\,V \otimes {\mbox{--}}\,$ preserves fibrations as well as acyclic
fibrations. Thus, by Lemma~\ref{Qpair}, $\,(\,\mbox{--}\,)_V :\,\DGA_S \rightleftarrows \cDGA_k\,: \END\,V \otimes {\mbox{--}}\,$ is a Quillen pair. This proves part $(a)$. Part $(b)$ and $(c)$ now follow
directly from Quillen's Adjunction Theorem (see Theorem~\ref{Qthm}).
For part $(c)$, we need only to note that $\,G := \END\,V \otimes {\mbox{--}} \,$ is an exact functor in Quillen's sense,
{\it i.e.} $ \R G = G $, since $ \cDGA_k $ is a fibrant model category.
\eproof

\begin{definition}
\la{S2D1}
By Theorem~\ref{S2T2}, the assignment $\, S\bs A \mapsto Q(S\bs A)_V \,$ defines a functor
$$
\DRep_V:\ \Alg_S \to \Ho(\cDGA_k)
$$
which is independent of the choice of resolution $ Q(S\bs A) \sonto S\bs A $ in $ \DGA_S $. Abusing terminology, we call
$ \DRep_V(S \bs A) $ a relative {\it derived representation scheme} of $ A $. The homology of
$ \DRep_V(S \bs A) $ is a graded commutative algebra, which depends only on $ S \bs A$ and $V$. We write
\begin{equation}
\la{S2E13}
\H_\bullet(S \bs A,\,V) := \H_\bullet[\DRep_V(S\bs A)]
\end{equation}
and refer to \eqref{S2E13} as {\it representation homology} of $S \bs A$ with coefficients in $V$. In the absolute case
when $ S = k $, we simplify the notation writing $ \DRep_V(A) := \DRep_V(k \bs A) $ and
$ \H_\bullet(A,\,V) := \H_\bullet(k \bs A,\,V) $.
\end{definition}

\vspace{1ex}

\noindent
We now make a few remarks related to Theorem~\ref{S2T2}.

\vspace{1ex}

\subsubsection{}
\la{S2.2.2}
For any cofibrant resolutions $\,p: Q(S\bs A) \sonto S\bs A \,$ and $\,p': Q'(S\bs A) \sonto S\bs A \,$
of a given $ S \bs A \in \DGA_S $, there is a quasi-isomorphism $\,f_V:\,Q(S\bs A)_V
\stackrel{\sim}{\to} Q'(S\bs A)_V \,$ in $ \cDGA_k $. Indeed, by \ref{2.2.4}, the identity map on $A$
lifts to a morphism $ f: Q(S\bs A) \stackrel{\sim}{\to} Q'(S\bs A) $ such that $\,p'\,f = p\,$.
This morphism is automatically a weak equivalence in $ \DGA_S$, so $ \gamma f $ is an
isomorphism in $ \Ho(\DGA_S) $. It follows that $ \L(\gamma f)_V $ is an isomorphism in
$ \Ho(\cDGA_k)$. But $ Q(S\bs A) $ and $ Q'(S\bs A) $ are both cofibrant objects, so $ \L(\gamma f)_V =
\gamma(f_V) $ in $ \Ho(\cDGA_k) $. Thus $ f_V $ is a quasi-isomorphism in $\cDGA_k$.

\subsubsection{}
\la{S2.2.3}
The analogue of Theorem~\ref{S2T2} holds for the pair of functors
$\,
\rtv{\,\mbox{--}\,} :\ \DGA_S \rightleftarrows \DGA_k\,: \ \END\,V \otimes {\mbox{--}}
\,$,
which are adjoint to each other by Proposition~\ref{S2P1}$(a)$. Thus, $\, \rtv{\,\mbox{--}\,} \,$ has
the left derived functor
\begin{equation*}
\la{S2E15'}
\L \rtv{\,\mbox{--}\,}:\, \Ho(\DGA_S) \to \Ho(\DGA_k) \ ,\quad \L \rtv{S \bs A} := \rtv{Q(S \bs A)}\ ,
\end{equation*}
which is left adjoint to $\,\END\,V \otimes {\mbox{--}}\,$ on the homotopy category $ \Ho(\DGA_k) $.

\subsubsection{}
\la{S2.2.4}
If $V$ is a complex concentrated in degree $0$, the functors
$ (\,\mbox{--}\,)_V $ and $\,\End\,V \otimes \mbox{--} \,$ restrict to the category of
{\it non-negatively} graded DG algebras and still form the adjoint pair
\begin{equation*}
\la{plus}
(\,\mbox{--}\,)_V :\,\DGA^+_S \rightleftarrows \cDGA^+_k\,: \End\,V \otimes {\mbox{--}}\quad .
\end{equation*}
The categories $ \DGA_S^+ $ and $ \cDGA_k^+ $ have natural model structures (see Theorem~\ref{modax1}), for which
all the above results, including  Theorem~\ref{S2T2}, hold, with proofs being identical to the unbounded case.

\subsubsection{}
\la{S2.2.5}
The representation functor \eqref{rep} naturally extends to the category $ \mathtt{SAlg}_k $ of simplicial
$k$-algebras, and one can also use the model structure on this last category to construct the derived functors of
\eqref{rep}. However, for any commutative ring $ k $,  the model category $ \mathtt{SAlg}_k $ is known to be
is Quillen equivalent to the model category $ \DGA^+_k $ (see \cite{SS2}, Theorem~1.1).  Also, if
$ k $ is a field of characteristic zero (as we always assume in this paper), the corresponding categories of
commutative algebras $ \mathtt{SComm\,Alg}_k $ and $ \cDGA_k^+ $  are Quillen equivalent
(see \cite{Q2}, Remark on p. 223). Thus, at least when $V$ is a complex concentrated in degree $0$,
the derived representation functors $ \DRep_V $ constructed using simplicial and DG resolutions are
naturally equivalent.

\subsection{Basic properties of $ \DRep_V(S \bs A)$}
\la{S2.3}

\subsubsection{}
\la{S2.3.1}
We begin by clarifying how the functor $ \DRep_V $ depends on $V$.
Let $ \DGMod(S) $ be the category of DG modules over $S$, and let $V$ and $W$ be two modules in
$ \DGMod(S) $ each of which has finite dimension over $k$.

\bprop[\cite{BKR}, Proposition~2.3]
\la{S2P3}
If $\,V$ and $W$ are quasi-isomorphic in $\DGMod(S)$, the corresponding derived functors
$ \L (\,\mbox{--}\,)_V $ and $ \L (\,\mbox{--}\,)_W :  \Ho(\DGA_S) \to \Ho(\cDGA_k) $ are naturally equivalent.
\eprop
The proof of this proposition is based on the following lemma, which is probably known to the experts.
\blemma
\la{S2L3}
Let $V$ and $W$ be two bounded DG modules over $S$, and assume that there is a quasi-isomorphism $\,f: V \stackrel{\sim}{\to} W \,$
in $\DGMod(S)$. Then the DG algebras $ \END\,V $ and $ \END\,W $ are weakly equivalent in $ \DGA_S $, {\it i.e.} isomorphic in $ \Ho(\DGA_S) $.
\elemma

As an immediate consequence of Proposition~\ref{S2P3}, we get
\begin{corollary}
\la{S2C1}
If $ V $ and $ W $ are quasi-isomorphic $S$-modules, then
$\,\DRep_V(S \bs A) \cong \DRep_W(S \bs A) \,$ for any algebra
$\, S \bs A\in \Alg_S $. In particular, $\,\H_\bullet(S \bs A,V)
\cong \H_\bullet(S \bs A,W)\,$ as graded algebras.
\end{corollary}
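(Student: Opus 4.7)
The corollary is essentially a direct evaluation of Proposition~\ref{S2P3}, and my plan is to make this precise in three short moves.

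First, I would invoke Proposition~\ref{S2P3}, which supplies a natural equivalence of functors
$\,\L(\,\mbox{--}\,)_V \simeq \L(\,\mbox{--}\,)_W :\,\Ho(\DGA_S) \to \Ho(\cDGA_k)\,$ whenever $V$ and $W$ are quasi-isomorphic in $ \DGMod(S) $. Since every ordinary $S$-algebra $\,S\bs A \in \Alg_S\,$ sits inside $ \DGA_S $ as an object concentrated in degree zero, evaluating the natural equivalence at $ S\bs A $ gives an isomorphism $\, \DRep_V(S\bs A) \cong \DRep_W(S\bs A) \,$ in $ \Ho(\cDGA_k)$, where I use Definition~\ref{S2D1} to identify $\,\L(S\bs A)_V = Q(S\bs A)_V = \DRep_V(S\bs A)\,$ (and similarly for $W$).

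Second, I would pass to homology. An isomorphism in $ \Ho(\cDGA_k) $ between two commutative DG algebras is represented by a zigzag of quasi-isomorphisms between their cofibrant (hence also any) representatives. Since quasi-isomorphisms of DG algebras induce isomorphisms of graded algebras on homology, this yields a canonical isomorphism
\[
\H_\bullet(S\bs A, V) \;=\; \H_\bullet[\DRep_V(S\bs A)] \;\cong\; \H_\bullet[\DRep_W(S\bs A)] \;=\; \H_\bullet(S\bs A, W)
\]
of graded commutative $k$-algebras, which is the desired statement.

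Since Proposition~\ref{S2P3} is already assumed, there is no real obstacle here. The only thing that needs a brief mention is that the equivalence of $\L(\,\mbox{--}\,)_V $ and $\L(\,\mbox{--}\,)_W $ is natural, so the isomorphism $\DRep_V(S\bs A) \cong \DRep_W(S\bs A)$ is in fact functorial in $S\bs A \in \Alg_S$. The interesting content is of course all in Proposition~\ref{S2P3} (and Lemma~\ref{S2L3}) rather than in the corollary itself.
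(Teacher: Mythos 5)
Your proof is correct and takes exactly the route the paper intends: Corollary~\ref{S2C1} is stated as ``an immediate consequence of Proposition~\ref{S2P3},'' and your three steps (evaluate the natural equivalence at $S\bs A$, identify the values with $\DRep_V(S\bs A)$ and $\DRep_W(S\bs A)$ via Definition~\ref{S2D1}, and take homology using that isomorphisms in $\Ho(\cDGA_k)$ are represented by quasi-isomorphisms of DG algebras) are precisely the unpacking the paper leaves to the reader. One small refinement: since $Q(S\bs A)_V$ and $Q(S\bs A)_W$ are cofibrant and $\cDGA_k$ is fibrant, the isomorphism in $\Ho(\cDGA_k)$ is represented by an honest quasi-isomorphism of DG algebras, so no zigzag is needed, but your argument is equally valid.
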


\subsubsection{Base change}
\la{S2.4}
Let $\, R \xrightarrow{g} S \xrightarrow{f} A\,$ be morphisms in $\DGA_k$. Fix a DG representation
$\,\varrho:\, S \to \END\,V $, and let $\,\varrho_R := \varrho \circ g\,$. Using $ \varrho $
and $ \varrho_R $, define the representation functors $\,(S \bs\,\mbox{--}\,)_V:\,\DGA_S \to \cDGA_k\,$,
and $\, (R\bs\,\mbox{--}\,)_V :\,\DGA_R \to \cDGA_k \,$ and consider the corresponding derived functors
$ \L(S\bs\,\mbox{--}\,)_V $ and $ \L(R\bs\,\mbox{--}\,)_V $.

\begin{theorem}[\cite{BKR}, Theorem~2.3]
\la{hopushout}
$(a)$ The commutative diagram
\begin{equation}
\la{drepd11}
\begin{diagram}[small, tight]
(R\bs S)_V              &  \rTo^{\,(R\bs f)_V\,}            & (R\bs A)_V\\
\dTo^{(R\bs \varrho)_V} &                                   & \dTo \\
k                       &  \rInto                           & (S\bs A)_V
\end{diagram}
\end{equation}
is a cocartesian square in $ \cDGA_k $.

$(b)$  There is a homotopy commutative diagram
\begin{equation}
\la{drepd2}
\begin{diagram}[small, tight]
\L(R\bs S)_V           &  \rTo^{\,\L(f)_V\,}        & \L(R\bs A)_V\\
\dTo^{\L(\varrho)_V} &                    & \dTo \\
k                &  \rInto            & \L(S\bs A)_V
\end{diagram}
\end{equation}
which is a cocartesian square in $ \Ho(\cDGA_k) $.
\end{theorem}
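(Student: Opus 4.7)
The plan for (a) is to unwind universal properties directly. For any test object $C \in \cDGA_k$, Proposition~\ref{S2P1}(b) identifies $\Hom_{\cDGA_k}((S\bs A)_V, C)$ with the set of $S$-algebra maps $A \to \END V \otimes C$ --- equivalently, $R$-algebra maps $\phi: A \to \END V \otimes C$ whose restriction along $f$ equals the composite $S \xrightarrow{\varrho} \END V \to \END V \otimes C$. Passing each side of this constraint through the adjunction of Proposition~\ref{S2P1}(b) for $R$ in place of $S$, the condition becomes $\bar\phi \circ (R\bs f)_V = \eta_C \circ (R\bs \varrho)_V$, where $\bar\phi \in \Hom_{\cDGA_k}((R\bs A)_V, C)$ is the map corresponding to $\phi$ and $\eta_C: k \to C$ is the structure map. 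By the Yoneda lemma, this identifies $(S\bs A)_V$ with $k \sqcup_{(R\bs S)_V} (R\bs A)_V$, proving (a).

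For (b), I would build compatible cofibrant resolutions in $\DGA_R$. Factor the composite $R \to A$ as $R \into Q_R S \sonto S$ followed by $Q_R S \into Q_R A \sonto A$, with each arrow chosen as indicated. Both $Q_R S$ and $Q_R A$ are then cofibrant in $\DGA_R$, so $(R\bs Q_R S)_V$ and $(R\bs Q_R A)_V$ compute $\L(R\bs S)_V$ and $\L(R\bs A)_V$ respectively. Next, set $Q_S A := S \sqcup_{Q_R S} Q_R A$, the ordinary pushout in $\DGA_R$. I would then show that $Q_S A$ is a cofibrant resolution of $A$ in $\DGA_S$: the map $S \into Q_S A$ is a cofibration (being a pushout of $Q_R S \into Q_R A$), so $Q_S A$ is cofibrant in $\DGA_S$; and the weak equivalence $Q_S A \sonto A$ would follow from left properness of $\DGA_k$ (which holds over a characteristic-zero field), guaranteeing that the pushout $Q_R A \to Q_S A$ of the weak equivalence $Q_R S \sonto S$ along the cofibration $Q_R S \into Q_R A$ is itself a weak equivalence, whence two-of-three applied with $Q_R A \sonto A$ gives $Q_S A \sonto A$.

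To conclude, apply the colimit-preserving left adjoint $(R\bs -)_V$ to the pushout defining $Q_S A$, then apply part (a) to the chain $R \to S \to Q_S A$, and paste the resulting pushouts to obtain
\[
\L(S\bs A)_V \;=\; (S\bs Q_S A)_V \;\cong\; k \;\sqcup_{(R\bs Q_R S)_V}\; (R\bs Q_R A)_V .
\]
Since $(R\bs -)_V$ is left Quillen by Theorem~\ref{S2T2}(a), the cofibration $Q_R S \into Q_R A$ maps to a cofibration $(R\bs Q_R S)_V \into (R\bs Q_R A)_V$ in $\cDGA_k$; as all three source objects are cofibrant ($k$ being initial), the ordinary pushout on the right is also a homotopy pushout, yielding the cocartesian square \eqref{drepd2} in $\Ho(\cDGA_k)$. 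The main obstacle is the verification that $Q_S A$ is a resolution in $\DGA_S$, which rests on left properness of $\DGA_k$ --- a standard but unstated technical ingredient in the surrounding excerpt; the remainder of the argument is formal manipulation with universal properties.
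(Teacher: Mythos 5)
Your proof is correct, and it recovers the theorem through the natural line of attack: a Yoneda-style computation for the strict statement (a) followed by a careful construction of compatible cofibrant resolutions for the derived statement (b). The paper defers the proof to \cite{BKR}, Theorem~2.3, so a line-by-line comparison is not possible here, but a few remarks are in order.

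For part (a), the adjunction argument is clean and complete. The only point worth spelling out --- which you handle correctly but implicitly --- is that the map labelled $(R\bs \varrho)_V\colon (R\bs S)_V \to k$ in the diagram is, via Proposition~\ref{S2P1}(b), the $k$-point corresponding to the $R$-algebra map $\varrho\colon S \to \END\,V = \END\,V \otimes k$, so that postcomposing with $\eta_C\colon k \to C$ gives exactly the map $(R\bs S)_V \to C$ adjoint to $S \xrightarrow{\varrho} \END\,V \to \END\,V \otimes C$. With that dictionary in place the fibre-product description of $\Hom_{\cDGA_k}((S\bs A)_V,\,C)$ falls out and Yoneda finishes it.

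For part (b), your construction $Q_S A := S \amalg_{Q_R S} Q_R A$ is the right move, and you are also right to flag left properness as the load-bearing technical input: without it one cannot conclude that the pushout of the acyclic fibration $Q_R S \sonto S$ along the cofibration $Q_R S \into Q_R A$ remains a weak equivalence (note that $S$ itself is generally not cofibrant, so the cofibrant-objects version of the gluing lemma does not apply directly). Properness of $\DGA_k^+$ is in fact asserted and used in this paper --- see the proof of Proposition~\ref{abt2}, which cites \cite{BKR}, Proposition~B.3 --- so the ingredient you identify is exactly the one the authors have in their toolkit. Your parenthetical attribution of properness to characteristic zero is inaccurate (it has nothing to do with the ground field), but this is cosmetic. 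The remainder of the argument --- applying the colimit-preserving left adjoint $(R\bs\,\mbox{--}\,)_V$ to the defining pushout, splicing with part (a) applied to $R \to S \to Q_S A$, cancelling the common term $(R\bs S)_V$ via pasting, and then checking that the resulting strict pushout is a homotopy pushout because the leg $(R\bs Q_R S)_V \into (R\bs Q_R A)_V$ is a cofibration (image of a cofibration between cofibrant objects under a left Quillen functor) with cofibrant vertices and cofibrant initial vertex $k$ --- is all correct and is the standard way to pass from Quillen pair plus strict pushout to derived pushout.
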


Let us state the main corollary of Theorem~\ref{hopushout}, which may be viewed
as an alternative definition of $ \DRep_V(S\bs A) $. It shows that our
construction of relative $ \DRep_V $ is a `correct' one from homotopical point of view
({\it cf.} \cite{Q2}, Part~I, 2.8).
\begin{corollary}
\la{hlimr}
For any $ S \bs A \in \Alg_S $, $\,\DRep_V(S\bs A) \,$ is a homotopy cofibre of the natural map
$ \DRep_V(S) \to \DRep_V(A) $, {\it i.e.}
\begin{equation*}
\begin{diagram}[small, tight]
\DRep_V(S)          &  \rTo        &  \DRep_V(A)\\
\dTo &                    & \dTo \\
k                &  \rInto            & \DRep_V(S\bs A)
\end{diagram}
\end{equation*}
is a pushout diagram in $ \Ho(\cDGA_k^+) $.
\end{corollary}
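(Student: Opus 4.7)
The plan is to deduce Corollary~\ref{hlimr} as an immediate specialization of Theorem~\ref{hopushout}(b) to the absolute case $R = k$. Since $k$ is the initial object of $\DGA_k$, in this situation $\DGA_R = \DGA_k$, and the relative derived representation functor $\L(k\bs \mbox{--})_V$ coincides with the absolute derived representation functor $\DRep_V$ of Definition~\ref{S2D1}. Under this identification, the vertices $\L(R\bs S)_V$, $\L(R\bs A)_V$ and $\L(S\bs A)_V$ of the homotopy cocartesian square \eqref{drepd2} become $\DRep_V(S)$, $\DRep_V(A)$ and $\DRep_V(S\bs A)$ respectively, and the top horizontal arrow becomes the natural morphism $\DRep_V(S) \to \DRep_V(A)$ induced by $f : S \to A$.

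The next step is to identify the left vertical arrow $\L(\varrho)_V$ of \eqref{drepd2} with the canonical morphism $\DRep_V(S) \to k$ classifying the distinguished representation $\varrho : S \to \END\,V$. For this I will invoke the derived adjunction of Theorem~\ref{S2T2}(c) with $B := k$, which yields a bijection
$$
\Hom_{\Ho(\cDGA_k)}(\DRep_V(S),\, k) \;\cong\; \Hom_{\Ho(\DGA_k)}(S,\, \END\,V)\ ,
$$
under which $\varrho$ itself corresponds to the desired map; tracing the underived arrow $(R\bs \varrho)_V$ of \eqref{drepd11} through the defining formula $\L(\mbox{--})_V(X) = (QX)_V$ of Theorem~\ref{S2T2}(b) then shows that this map is indeed $\L(\varrho)_V$.

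Combining these identifications turns the homotopy cocartesian square \eqref{drepd2} into exactly the square asserted in the corollary, which by definition exhibits $\DRep_V(S\bs A)$ as a homotopy cofibre of $\DRep_V(S) \to \DRep_V(A)$ in $\Ho(\cDGA_k^+)$. The only real subtlety, and the main (admittedly minor) obstacle, is bookkeeping: one must be careful that the symbol $(R\bs \varrho)_V$ in \eqref{drepd11} denotes the map corresponding to $\varrho$ viewed as a $k$-point of $\Rep_V(R\bs S)$, rather than the literal application of the functor $(R\bs \mbox{--})_V$ to the morphism $\varrho$ (the target of the latter would be $(R\bs \END V)_V$, which need not agree with $k$). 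Once this convention is fixed, no further argument is needed beyond invoking Theorem~\ref{hopushout}(b).
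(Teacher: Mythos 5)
Your proposal is correct and takes essentially the same route as the paper: the paper itself introduces Corollary~\ref{hlimr} as ``the main corollary of Theorem~\ref{hopushout}'', obtained by specializing the homotopy cocartesian square \eqref{drepd2} to $R = k$, exactly as you do, with the identification $\L(k\bs \mbox{--})_V = \DRep_V$ from Definition~\ref{S2D1}. Your remark about the notation $(R\bs\varrho)_V$ denoting the classifying map of $\varrho$ (via the adjunction of Theorem~\ref{S2T2}(c) with $B = k$) rather than the image of $\varrho$ under the functor is a genuine subtlety and is handled correctly; the only small point you leave implicit is passage from $\cDGA_k$ to $\cDGA_k^+$, which is licensed by Remark~\ref{S2.2.4} since $S$ and $A$ are ordinary algebras.
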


The above result suggests that the homology of $ \DRep_V(S\bs A) $ should be related to
the homology of $ \DRep_V(S) $ and $ \DRep_V(A) $ through a standard spectral sequence
associated to a fibration. To simplify matters we will assume that $V$ is a $0$-complex
and work in the category $ \DGA_k^+$ of non-negatively graded DG algebras
({\it cf.} Remark~\ref{S2.2.4}).
\begin{corollary}
\la{EMsps}
Given $\, R \xrightarrow{} S \xrightarrow{} A\,$ in $\DGA^+_k$ and a representation $ S \to \End(V) $,
there is an Eilenberg-Moore spectral sequence with
$$
E^2_{\ast,\,\ast} = \Tor_{\ast,\,\ast}^{\H_\bullet(R\bs S, V)}(k,\,\H_\bullet(R\bs A, V))
$$
converging to $ \H_{\bullet}(S\bs A, V) $.
\end{corollary}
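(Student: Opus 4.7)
The plan is to deduce the spectral sequence directly from the homotopy cocartesian square of Theorem~\ref{hopushout}$(b)$. Since in $ \cDGA_k^+ $ the coproduct of two $ B $-algebras is simply their tensor product over $ B $, that square identifies $ \DRep_V(S\bs A) $, up to quasi-isomorphism, with a derived tensor product:
$$
\DRep_V(S\bs A)\ \simeq\ k\,\otimes^{\Ll}_{\DRep_V(R\bs S)}\, \DRep_V(R\bs A)\ ,
$$
where the right-hand side is computed in the category of DG modules over $ \DRep_V(R\bs S) $.

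To make this precise, I would first pick a cofibrant resolution $ Q(R\bs S) \sonto S $ in $ \DGA_R $ and then factor the composite $ Q(R\bs S) \to A $ as a cofibration followed by an acyclic fibration to obtain a cofibrant resolution $ Q(R\bs A) \sonto A $ in $ \DGA_R $ together with a cofibration $ Q(R\bs S) \into Q(R\bs A) $. Applying the left Quillen functor $ (R\bs -)_V $ (Theorem~\ref{S2T2}$(a)$) yields a cofibration $ Q(R\bs S)_V \into Q(R\bs A)_V $ in $ \cDGA_k^+ $ whose source and target present $ \DRep_V(R\bs S) $ and $ \DRep_V(R\bs A) $. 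By Theorem~\ref{hopushout}$(a)$, the ordinary pushout $ k \otimes_{Q(R\bs S)_V} Q(R\bs A)_V $ is isomorphic to $ Q(S\bs A)_V $ and hence represents $ \DRep_V(S\bs A) $. Because cofibrations in $ \cDGA_k^+ $ are retracts of semi-free extensions (Theorem~\ref{modax1}(iii)), they are degreewise split injective on the underlying DG modules with free cokernel; in particular, $ Q(R\bs A)_V $ is cofibrant as a DG module over $ Q(R\bs S)_V $. Thus the underived tensor product also computes the derived one.

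With the identification above established, I would invoke the classical Eilenberg--Moore spectral sequence: for any DG algebra $ B $ and DG $ B $-modules $ M, N $, the two-sided bar complex $ \mathrm{Bar}(M, B, N) $ computes $ M \otimes^{\Ll}_B N $, and its bar filtration induces a spectral sequence
$$
E^2_{p,q}\,=\,\Tor^{\H_\bullet(B)}_{p,q}(\H_\bullet(M),\,\H_\bullet(N))\ \Longrightarrow\ \H_{p+q}(M\otimes^{\Ll}_B N)\ ,
$$
convergent whenever all objects are non-negatively graded (as is the case here). Applying this with $ B = \DRep_V(R\bs S) $, $ M = k $ and $ N = \DRep_V(R\bs A) $ yields exactly the spectral sequence of the corollary. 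The main obstacle is the first step: verifying that the homotopy pushout in $ \Ho(\cDGA_k^+) $ of Theorem~\ref{hopushout}$(b)$ actually models a derived tensor product in the module category over $ \DRep_V(R\bs S) $. Once this passage from algebras to modules is justified via the semi-free description of cofibrations, the remainder is the standard Eilenberg--Moore construction.
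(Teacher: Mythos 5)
Your proof is correct and follows exactly the route the paper's surrounding discussion indicates: use Theorem~\ref{hopushout} (equivalently Corollary~\ref{hlimr}) to express $\DRep_V(S\bs A)$ as a homotopy pushout in $\cDGA_k^+$, recognise this pushout as a derived tensor product over $\DRep_V(R\bs S)$, and feed it into the classical bar-filtration Eilenberg--Moore spectral sequence. Two small points worth making explicit: (i) the identification of your ordinary pushout $k\otimes_{Q(R\bs S)_V}Q(R\bs A)_V$ with $\DRep_V(S\bs A)$ uses part~(b) of Theorem~\ref{hopushout} (not just part~(a)) together with left properness of $\cDGA_k^+$, since $Q(R\bs A)$ is cofibrant over $Q(R\bs S)$ rather than directly over $S$; and (ii) the fact that $Q(R\bs A)_V$ is cofibrant as a DG $Q(R\bs S)_V$-module follows not merely from graded-freeness but from the filtration by homological degree, which works precisely because everything is non-negatively graded. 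Both of these are easy, and you gesture at them, so the argument goes through.
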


\subsubsection{}
\la{S2.3.3}
The next result shows that $ \DRep_V(S \bs A) $ is indeed the `higher' derived functor of
the classical representation scheme $ \Rep_V(S\bs A) $ in the sense of homological algebra.
\bthm
\la{S2T4}
Let $ S \in \Alg_k $ and $ V $ concentrated in degree $0$. Then, for any $ S \bs A \in \Alg_S $,
$$
\H_0(S\bs A,V) \cong (S\bs A)_V
$$
where $(S\bs A)_V$ is a commutative algebra representing $ \Rep_V(S \bs A) $.
\ethm

Theorem~\ref{S2T4} implies, in particular, that $ \DRep_V(S\bs A) $ is trivial whenever $ \Rep_V(S \bs A) $ is trivial.
Indeed, if $ \Rep_V(S \bs A) $ is empty, then $ (S\bs A)_V = 0 $. By Theorem~\ref{S2T4}, this means
that $\,1 = 0\,$ in $ \H_\bullet[\DRep_V(S\bs A)] $, hence $ \H_\bullet[\DRep_V(S\bs A)] $ is the zero
algebra. This, in turn, means that $ \DRep_V(S\bs A) $ is acyclic and hence $ \DRep_V(S \bs A) = 0 $
in $\Ho(\cDGA^+_k) $ as well.

\vspace{1ex}

\noindent
{\bf Example.}\ Take the first Weyl algebra $\, A_1(k) := k \langle x, y \rangle/(xy-yx-1) \,$.  Since $k$ has
characteristic zero, $A_1(k)$ has no (nonzero) finite-dimensional modules. So $ \Rep_V[A_1(k)] $ is empty and
$ \DRep_V[A_1(k)] = 0 $ for all $ V \not= 0 $. Note that, even if we allow $ V $ to be a chain complex,
we still get $\, \DRep_V[A_1(k)] = 0 \,$, by Proposition~\ref{S2P3}.

\vspace{1ex}

\subsection{The invariant subfunctor}
\la{S2.3.4}
We will keep the assumption that $V$ is a $0$-complex and assume, in addition, that $S=k$.
Let $ \GL(V) \subset \End(V) $ denote, as usual, the group of invertible endomorphisms of $V$.
Consider the right action of $ \GL(V) $ on $ \End(V) $ by conjugation, $\,\alpha \mapsto g^{-1} \alpha g \,$,
and extend it naturally to the functor $\,\End\,V \otimes \mbox{--} \, : \,\cDGA_k \to \DGA_k \,$.
Through the adjunction of Proposition~\ref{S2P1}$(b)$, this right action induces a (left) action on the representation functor $\, (\,\mbox{--}\,)_V: \DGA_k \to \cDGA_k \,$, so we
can define its invariant subfunctor
\begin{equation}
\la{S2E16}
(\,\mbox{--}\,)_V^{\GL}\,:\ \DGA_k \to \cDGA_k\ , \quad A \mapsto A_V^{\GL(V)}\ .
\end{equation}
Unlike  $ (\,\mbox{--}\,)_V\,$, the functor \eqref{S2E16} does not seem to
have a right adjoint, so it is not a left Quillen functor. The Quillen
Adjunction Theorem does not apply in this case. Still, using
Brown's Lemma~\ref{BrL}, we prove
\bthm[\cite{BKR}, Theorem 2.6]
\la{S2P4}
$(a)$ The functor \eqref{S2E16} has a total left derived functor
$$
\L(\,\mbox{--}\,)_V^{\GL}:\,\Ho(\DGA_k) \to \Ho(\cDGA_k)\ .
$$

$(b)$ For any $ A \in \DGA_k $, there is a natural isomorphism of graded algebras
$$
\H_\bullet[\L(A)_V^{\GL}] \cong \H_\bullet(A, V)^{\GL(V)}\ .
$$
\ethm

\noindent
If $\,A \in \Alg_k \,$, abusing notation we will sometimes write
$ \DRep_V(A)^{\GL} $ instead of $ \L(A)_V^{\GL} $.

\subsection{The Ciocan-Fontanine-Kapranov construction}
\la{S2.3.5} For an ordinary $k$-algebra $A$ and a $k$-vector space $V$, Ciocan-Fontanine and
Kapranov introduced a derived affine scheme, $ \RAct(A, V) $, which they called the {\it derived space of actions of $A$}
(see \cite{CK}, Sect.~3.3). Although the construction of $ \RAct(A, V) $ is quite different from our construction
of $ \DRep_V(A) $, Proposition~3.5.2 of \cite{CK} shows that, for a certain {\it specific} resolution of $A$,
the DG algebra $ k[\RAct(A, V)] $ satisfies the adjunction of Proposition~\ref{S2P1}$(b)$.
Since $ k[\RAct(A, V)] $ and $ \DRep_V(A) $ are independent of the choice of resolution, we conclude
\begin{theorem}
\la{S2T44}
If $\,A \in \Alg_k \,$ and $V$ is a $0$-complex,
then $\, k[\RAct(A,\,V)] \cong \DRep_V(A) \,$ in $ \Ho(\cDGA^+_k) $.
\end{theorem}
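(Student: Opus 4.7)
The plan is to match the two constructions by showing that they corepresent the same set-valued functor on $\cDGA_k^+$, and then invoke Yoneda together with the resolution-independence built into Definition~\ref{S2D1}. Recall that the Ciocan-Fontanine--Kapranov construction proceeds by first choosing a specific cofibrant resolution $R \sonto A$ of $A$ in $\DGA_k^+$ (obtained by a bar-type construction involving the free algebra on the underlying vector space of $A$) and then building $\RAct(A,V)$ as an explicit affine DG scheme whose coordinate ring $k[\RAct(A,V)]$ lies in $\cDGA_k^+$. The key input is Proposition~3.5.2 of \cite{CK}, which asserts that for this specific $R$ there is a natural bijection
\begin{equation*}
\Hom_{\cDGA_k^+}(k[\RAct(A,V)],\, B) \;\cong\; \Hom_{\DGA_k^+}(R,\, \End\,V \otimes B)\ ,\qquad B \in \cDGA_k^+\ .
\end{equation*}

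Next I would compare this with our Proposition~\ref{S2P1}$(b)$, applied in the absolute case $S = k$ and specialized to the same resolution $R$ of $A$: it yields the same adjunction
\begin{equation*}
\Hom_{\cDGA_k^+}(R_V,\, B) \;\cong\; \Hom_{\DGA_k^+}(R,\, \End\,V \otimes B)\ ,\qquad B \in \cDGA_k^+\ .
\end{equation*}
Since both $k[\RAct(A,V)]$ and $R_V$ corepresent the same functor on $\cDGA_k^+$, the Yoneda lemma provides a canonical isomorphism $k[\RAct(A,V)] \cong R_V$ in $\cDGA_k^+$ (in particular in $\Ho(\cDGA_k^+)$). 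By Definition~\ref{S2D1}, the object $R_V$ is precisely $\DRep_V(A)$ (as an object of the homotopy category), because $R \sonto A$ is a cofibrant resolution in $\DGA_k^+$. Combining these isomorphisms yields $k[\RAct(A,V)] \cong \DRep_V(A)$ in $\Ho(\cDGA_k^+)$, as required.

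Finally, to conclude that the statement is truly independent of the particular resolution used by \cite{CK}, I would invoke Remark~\ref{S2.2.2}: any two cofibrant resolutions $Q \sonto A$ and $Q' \sonto A$ in $\DGA_k^+$ give rise to a canonical quasi-isomorphism $Q_V \stackrel{\sim}{\to} Q'_V$ in $\cDGA_k^+$, so the class of $R_V$ in $\Ho(\cDGA_k^+)$ coincides with $\DRep_V(A)$ whatever resolution one starts from.

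The main obstacle is purely expository rather than mathematical: one needs to check carefully that the resolution used in \cite{CK} is genuinely cofibrant in our sense (it is --- it is a semi-free extension of $k$ in $\DGA_k^+$), and that the adjunction stated in Proposition~3.5.2 of \cite{CK} matches ours verbatim, including the identification of $\End\,V \otimes B$ on the right-hand side. Once these identifications are in place, the argument reduces to a single application of Yoneda's lemma.
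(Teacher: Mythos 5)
Your argument follows the paper's own reasoning exactly: identify both sides as corepresenting objects for the same functor via Proposition~3.5.2 of \cite{CK} and Proposition~\ref{S2P1}$(b)$ applied to the CK resolution, apply Yoneda, and conclude by resolution-independence (Remark~\ref{S2.2.2}). You have merely spelled out the Yoneda step that the paper leaves implicit, so the proof is correct and takes essentially the same route.
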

The fact that $ k[\RAct(A, V)] $ is independent of resolutions was proved in
\cite{CK} by a fairly involved calculation using spectral sequences. Strictly speaking, this
calculation does not show that $ \RAct(\,\mbox{--}\,, V) $ is a Quillen derived functor.
In combination with Theorem~\ref{S2T44}, our main Theorem~\ref{S2T2} can thus be viewed as a strengthening of \cite{CK} --- it implies that $ \RAct(A, V) $ is indeed a (right) Quillen derived functor on the category of DG schemes.

\subsection{Explicit presentation}
\la{S2.5}
Let $ A \in \Alg_k $. Given an semi-free resolution $ R \sonto A $ in $\DGA^+_k$, the DG algebra $ R_V $ can be
described explicitly. To this end, we extend a construction of Le Bruyn and van de Weyer (see \cite{LBW}, Theorem~4.1)
to the case of DG algebras. Assume, for simplicity, that $ V = k^d $. Let $\{x^\alpha\}_{\alpha \in I}$ be a set of
homogeneous generators of a semi-free DG algebra $R$, and let $ d_R $ be its differential. Consider a free graded algebra
$\tilde{R}$ on generators $\,\{x^{\alpha}_{ij}\,:\, 1\leq i,j\leq d\, ,\, \alpha \in I\}\,$, where
$\,|x^{\alpha}_{ij}|=|x^{\alpha}|\,$ for all $ i,j $. Forming matrices $ X^\alpha :=
\| x^{\alpha}_{ij}\| $ from these generators, we define the algebra map
$$
\pi:\, R \to \M_d(\tilde{R})\ , \quad  x^\alpha \mapsto X^\alpha \ ,
$$
where $ \M_d(\tilde{R}) $ denotes the ring of $ (d \times d)$-matrices with entries in $\tilde{R}$.
Then, letting $\, \tilde{d}(x^{\alpha}_{ij}) := \| \pi(d x^{\alpha}) \|_{ij}\,$, we define a differential
$ \tilde{d} $ on generators of $ \tilde{R} $ and extend it to the whole of $ \tilde{R}$ by linearity
and the Leibniz rule. This makes $ \tilde{R} $ a DG algebra. The commutativization of
$\tilde{R} $ is a free (graded) commutative algebra generated by (the images of) $\,x^\alpha_{ij}\,$ and the differential
on $ \tilde{R}_\nn $ is induced by $ \tilde{d}$.
\bthm[\cite{BKR}, Theorem~2.8]
\la{comp}
There is an isomorphism of DG algebras $\,  \rtv{R} \cong \tilde{R} \,$. Consequently, $\,R_V \cong \tilde{R}_\nn\,$.
\ethm
Using Theorem~\ref{comp}, one can construct a finite presentation for $ R_V $ (and hence an explicit
model for $ \DRep_V(A) $) whenever a finite semi-free resolution $ R \to A $ is available. We will apply
this theorem in Section~\ref{S6}, where we study representation homology  for three classes of algebras:
noncommutative complete intersections, Koszul and Calabi-Yau algebras, which have canonical
`small' resolutions.
\section{Cyclic Homology and Higher Trace Maps}
\la{S3}
In this section, we construct canonical trace maps
$\,\Tr_V(S \bs A)_n:\,\HC_{n-1}(S \bs A) \to \H_n(S \bs A, V)\,$ relating the cyclic homology of an $S$-algebra
$ A \in \Alg_S $ to its representation homology. In the case when $ S = k $ and $V$ is concentrated in degree
$ 0 $, these maps can be viewed as {\it derived characters} of finite-dimensional representations of $A$.

\subsection{Relative cyclic homology}
\la{S4.1}
We begin by recalling the Feigin-Tsygan construction of cyclic homology as a non-abelian derived functor on the category
of algebras (see \cite{FT, FT1}). To the best of our knowledge, this construction does not appear in standard textbooks
on cyclic homology (like, e.g., \cite{L} or \cite{W}).  One reason for this is perhaps that while the idea of Feigin and
Tsygan is very simple and natural, the proofs in \cite{FT, FT1} are obtained by means of spectral
sequences and are fairly indirect.   In  \cite{BKR}, we develop a more conceptual (and in fact,
slightly more general) approach and give proofs using simple model-categorical arguments. What follows is a
brief summary of this approach: for details, we refer to \cite{BKR}, Section~3.

If $ A $ is a DG algebra, we write $\, A_\natural := A/[A,A] \,$, where $ [A,A] $ is the commutator
subspace of $ A $. The assignment $\, A \mapsto A_\natural \,$ is obviously a functor from $ \DGA_k $ to
the category of complexes
$ \Com(k)$: thus, a morphism of DG algebras $ f: S \to A $ induces a morphism of complexes
$\,f_\natural: S_\natural \to A_\natural \,$.  Fixing $ S \in \DGA_k $, we now define the functor
\begin{equation}
\la{cycd}
\FT:\ \DGA_S \rar \Com(k)\ ,\quad (S \xrightarrow{f} A) \mapsto \cn(f_\natural)\ ,
\end{equation}
where `$ \cn $' refers to the mapping c\^{o}ne in $ \Com(k) $.

The category $ \Com(k) $ has a natural model structure with
quasi-isomorphisms being the weak equivalence and the  epimorphisms being the fibrations.
The corresponding homotopy category $ \Ho(\Com(k)) $ is isomorphic to the (unbounded)
derived category $ \D(k) := \D(\Com\,k) $ ({\it cf.} Theorem~\ref{Tloc}).
\bthm
\la{FTT}
The functor \eqref{cycd} has a total left derived functor $\,\L\FT:\,\Ho(\DGA_S) \to \D(k)\,$
given by
$$
\L\FT(S \bs A) = \cn(S_\n \to Q(S\bs A)_\n)\ ,
$$
where $\, S \to Q(S\bs A)\,$ is a cofibrant resolution of $ S \to A $ in $ \DGA_S $.
\ethm

Theorem~\ref{FTT} implies that the homology of $\, \L \FT(S \bs A)\,$ depends only on
the morphism $ S \to A $. Thus, we may give the following

\vspace{1ex}

\begin{definition}
\la{S3D1}
The (relative) {\it cyclic homology} of $\, S \bs A \in \DGA_S \,$ is defined by
\begin{equation}
\la{hcyc}
\HC_{n-1}(S \bs A) := \H_{n}[\L \FT(S \bs A)] = \H_{n} [\cn(S_\n \to Q(S\bs A)_\n)]\ .
\end{equation}
\end{definition}

\vspace{1ex}

If $ S \to A $ is a map of ordinary algebras and $ S \stackrel{i}{\into} QA \sonto A $ is a
cofibrant resolution of $ S \to A $ such that $ i $ is a semi-free extension in $ \DGA^+_S $,
then the induced map $\, i_\natural:\,S_\n \into (QA)_\n \,$ is injective, and
\begin{equation}
\la{cyc1}
\FT(S \bs QA) = \cn(i_\n) \cong (QA)_\natural/S_\natural \cong QA/([QA, QA] + i(S)) \ .
\end{equation}
In this special form, the functor $ \FT $ was originally introduced by Feigin and Tsygan in \cite{FT} (see also \cite{FT1});
they proved that the homology groups \eqref{hcyc} are independent of the choice of resolution using spectral sequences.
Theorem~\ref{FTT} is not explicitly stated in \cite{FT, FT1}, although it is implicit in several calculations.
We  emphasize that, in the case when $ S $ and $A$ are ordinary algebras, our definition of relative cyclic homology
\eqref{hcyc} agrees with the Feigin-Tsygan one.

\vspace{1ex}

One of the key properties of relative cyclic homology is the existence of a long exact sequence
for composition of algebra maps. Precisely,
\bthm[\cite{FT}, Theorem~2]
\la{S4C1}
Given DG algebra maps $ R \xrightarrow{} S \xrightarrow{} A $,
there is an exact sequence in cyclic homology
\begin{equation}
\la{ex2}
\ldots \to \HC_n(R \bs S) \to \HC_n(R \bs A) \to \HC_n(S \bs A) \to \HC_{n-1}(R \bs S) \to \ldots
\end{equation}
\ethm
In fact, the long exact sequence  \eqref{ex2} arises from the distinguished triangle in $ \D(k) \,$:
\begin{equation}
\la{isct}
\L\FT(R \bs S) \to \L\FT(R \bs A) \to \L\FT(S \bs A) \to
\L\FT(R \bs S)[1]\ ,
\end{equation}
the construction of \eqref{isct} is given in \cite{BKR}, Theorem~3.3.

\vspace{1ex}

If $A$ is an ordinary algebra over a field of characteristic zero, its {\it cyclic homology} $ \HC_\bullet(A) $
is usually defined  as the homology of the cyclic complex $ \CC(A) $ ({\it cf.} \cite{L}, Sect.~2.1.4):
\begin{equation}
\la{ccc}
\CC_n(A) := A^{\otimes (n+1)}/\im(\id-t_n)\ , \quad b_n:\,\CC_n(A) \to \CC_{n-1} (A)\ ,
\end{equation}
where $ b_n $ is induced by the standard Hochschild differential and
$ t_n $ is the cyclic operator defining an action of $ \Z/(n+1)\Z $ on $ A^{\otimes (n+1)} $:
\begin{equation}
\la{cop}
t_n:\, A^{\otimes (n+1)} \to A^{\otimes (n+1)}\ , \quad (a_0,\, a_1, \,\ldots \,, \,a_n) \mapsto
(-1)^{n}(a_n,\, a_0, \,\ldots \,, \,a_{n-1})\ .
\end{equation}
%
%
The complex $ \CC(A) $ contains the canonical subcomplex $ \CC(k) $; the homology of the corresponding
quotient complex $\, \rHC_\bullet(A) := \H_\bullet[\CC(A)/\CC(k)]\,$
is called the {\it reduced cyclic homology} of $A$. Both $ \HC(A) $
and $ \rHC(A) $ are special cases of relative cyclic homology in the sense of Definition~\eqref{hcyc}.
Precisely, we have the following result (due to Feigin and Tsygan \cite{FT}).
\bprop
\la{S4P1}
For any $k$-algebra $A$, there are canonical isomorphisms

\vspace{1ex}

$(a)$\ $\,\HC_n(A) \cong \HC_n(A \bs 0) \,$ for all $\,n\ge 0\,$,

\vspace{1ex}

$(b)$\ $\,\rHC_n(A) \cong \HC_{n-1}(k \bs A) \,$ for all $\,n\ge 1\,$.

\eprop
\bproof
$(a)$\ For any (unital) algebra $A$, the DG algebra $  A\langle x \rangle := A\, \amalg \,k \langle x \rangle$ coincides with
the bar construction of $A$ and hence is acyclic. The canonical morphism $\, A \into A\langle x \rangle \,$ provides then
a cofibrant resolution of $ A \to 0 $ in $ \DGA_A $. In this case, we can identify
$$
\L\FT(A\bs 0) \cong \cn(A_\n \to A\langle x \rangle_\n) \cong
A\langle x \rangle/(A + [A\langle x \rangle,\,A\langle x \rangle])
\cong \CC(A)[1]\ ,
$$
where the last isomorphism (in degree $ n>0$) is given by
$\,
a_1 \,x\, a_2\, x\, \ldots\, a_n\, x \leftrightarrow a_1 \otimes a_2 \otimes \ldots \otimes a_n\,$.
On the level of homology, this induces isomorphisms
$ \HC_{n-1}(A \bs 0) \cong \H_n(\CC(A)[1]) = \HC_{n-1}(A) $.

$(b)$ With above identification, the triangle \eqref{isct} associated to the canonical maps $\,k \to A \to 0 \,$ yields
$$
\L\FT(k\bs A) \cong
\cn(\L\FT(k\bs 0) \to \L\FT(A \bs 0))[-1] \cong
\cn[\CC(k) \to \CC(A)]\ .
$$
Whence $ \HC_{n-1}(k \bs A) \cong \rHC_n(A) $ for all $ n \ge 1 $.
\eproof

As a consequence of Theorem~\ref{S4C1} and Proposition~\ref{S4P1}$(a)$, we get the fundamental exact sequence associated
to an algebra map $\, S \to A \,$:
\begin{equation*}
\la{ex1}
\ldots \to \HC_n(S\bs A) \to \HC_n(S) \to \HC_n(A) \to \HC_{n-1}(S\bs A) \to \ldots \to \HC_0(S) \to \HC_0(A) \to 0\ .
\end{equation*}
In particular, if we take $ S = k $ and use the isomorphism of Proposition~\ref{S4P1}$(b)$, then \eqref{ex1} becomes
\begin{equation}
\la{rseq}
\ldots \to \HC_n(k) \to \HC_n(A) \to \rHC_n(A) \to \HC_{n-1}(k) \to \ldots \to \HC_0(A) \to \rHC_0(A) \to 0\ .
\end{equation}

\vspace{1ex}

\remark\ The isomorphism of Proposition~\ref{S4P1}$(a)$ justifies the shift of indexing in our definition \eqref{hcyc} of
relative cyclic homology. In \cite{FT, FT1}, cyclic homology
is referred to as an {\it  additive $K$-theory}, and a different notation is used. The relation between
the Feigin-Tsygan notation and our notation is $\,  K_{n}^+(A,S) = \HC_{n-1}(S \bs A)$ for all $\,n \ge 1\,$.

\vspace{1ex}

\subsection{Trace maps}
\la{S4.1e}
Let $V$ be a complex of $k$-vector spaces of total dimension $d$. The natural map
$\, k \into \END(V) \onto \END(V)_\n \,$ is an isomorphism of complexes, which we can use to
identify $\, \END(V)_\n = k \,$. This defines a canonical (super) trace map
$\,\Tr_V:\,\END\,V \to k \,$ on the DG algebra $ \END\,V $. Explicitly, $ \Tr_V $
is given by
$$
\Tr_V(f) = \sum_{i=1}^d (-1)^{|v_i|} f_{ii}\ ,
$$
where $ \{v_i\} $ is a homogeneous basis in $V$ and $ \| f_{ij} \| $
is the matrix representing $\,f \in \END\,V \,$ in this basis.

Now, fix $\, S \in \DGA_k \,$ and a DG algebra map
$\,\varrho:\,S \to \END\,V \,$ making $V$ a DG module over $S$.
For an $S$-algebra $ A \in \DGA_S $, consider the (relative) DG representation
scheme $ \Rep_V(S \bs A) $, and let $\,\pi_V:\,A \to \END\,V \otimes (S \bs A)_V \,$
denote the universal representation of $A$ corresponding to the identity map
in the adjunction of Proposition~\ref{S2P1}$(b)$. Consider the morphism of complexes
\begin{equation}
\la{E1S4}
A \xrightarrow{\pi_V} \END\,V \otimes (S \bs A)_V \xrightarrow{\Tr_V \otimes \id } (S \bs A)_V\ .
\end{equation}
Since $ \pi_V $ is a map of $S$-algebras, and the $S$-algebra structure on $ \END\,V \otimes (S \bs A)_V $
is of the form $ \varrho \otimes \id $, \eqref{E1S4} induces a map
$\, \Tr_V \circ \pi_V: \, A_\n \to (S \bs A)_V $, which fits in the commutative diagram
\begin{equation}
\la{D1S4}
\begin{diagram}[tight, small]
  S_\n                  &  \rTo^{}  & A_\n  \\
\dTo^{\Tr_V\circ \varrho} &           & \dTo_{\Tr_V \circ \pi_V} \\
  k                     &  \rTo^{}  & (S \bs A)_V
\end{diagram}
\end{equation}
This, in turn, induces a morphism of complexes
\begin{equation}
\la{E2S4}
\cn(S_\n \to A_\n) \to \overline{(S \bs A)}_V\ ,
\end{equation}
where we write $\, \bar{R} = R/k\cdot 1_R \,$ for a unital DG algebra $R$.
The family of morphisms \eqref{E2S4} defines a natural transformation of functors
from $ \DGA_S $ to $ \Com(k)\,$:
\begin{equation}
\la{E3S4}
\Tr_V:\, \FT \to \overline{(\mbox{---})}_V\ .
\end{equation}
The next lemma is a formal consequence of Theorem~\ref{S2T2} and Theorem~\ref{FTT}.
\blemma
\la{L1S4}
$ \Tr_V $ induces a natural transformation
$\, \L \FT \to \overline{\L(\mbox{---})}_V\,$ of functors $ \,\Ho(\DGA_S) \to \D(k)$.
\elemma
For any (non-acyclic) unital DG algebra $ R $, we have
\begin{equation}
\la{hvan}
\H_{n}(\bar{R}) \cong
\left\{
\begin{array}
{lcl}
\overline{\H_0(R)} & , & n = 0 \\*[1ex]
\H_{n}(R)           & , & n \not= 0
\end{array}
\right.
\end{equation}
This is immediate from the long homology sequence arising from
$ 0 \to k \to  R \to \bar{R} \to 0 $.
Hence, if $ A \in \Alg_S $ is an ordinary algebra, applying the natural transformation of
Lemma~\ref{L1S4} to $ S \bs A $ and using \eqref{hvan}, we can define
\begin{equation}
\la{char}
\Tr_V(S \bs A)_n:\ \HC_{n-1}(S\bs A) \to \H_{n}(S \bs A, V)\ ,\ n \ge 1\ .
\end{equation}
Assembled together, these trace maps define a homomorphism of
graded commutative algebras
\begin{equation}
\la{lchar}
\bL \Tr_V(S \bs A)_\bullet :\ \bL(\HC(S\bs A)[1]) \to \H_{\bullet}(S \bs A, V)
\ ,
\end{equation}
where $ \bL $ denotes the graded symmetric algebra of a graded $k$-vector space $W$.

We examine the trace maps \eqref{char} and \eqref{lchar} in the special case
when $S = k$ and $V$ is a single vector space concentrated in degree $0$. In this case, by
Proposition~\ref{S4P1}, the maps \eqref{char} relate
the reduced cyclic homology of $A$ to the (absolute) representation
homology:
\begin{equation}
\la{char1}
\Tr_V(A)_n:\ \rHC_{n}(A) \to \H_{n}(A, V)\ ,\ n \ge 1\ .
\end{equation}
Now, for each $n$, there is a natural map $ \HC_n(A) \to \rHC_n(A) $ induced by the projection of complexes
$ \CC(A) \onto \overline{\CC}(A) $, {\it cf.} \eqref{rseq}. Combining this map with \eqref{char1}, we get
\begin{equation}
\la{char2}
\Tr_V(A)_n:\ \HC_{n}(A) \to \H_{n}(A, V)\ , \ n\ge 0\ .
\end{equation}
Notice that \eqref{char2} is defined for all $\, n $, including $ n = 0 $. In the latter case,
$\, \H_0(A, V) \cong A_V \,$ (by Theorem~\ref{S2T4}), and $\, \Tr_V(A)_0:\, A_\n \to A_V\,$ is
the usual trace induced by $\,A \xrightarrow{\pi_V} A_V \otimes \End_k V \xrightarrow{\id \otimes \Tr} A_V\,$.

The linear maps \eqref{char2} define an algebra homomorphism
\begin{equation}
\la{lchar1}
\bL \Tr_V(A)_\bullet :\ \bL[\HC(A)] \to \H_{\bullet}(A, V)\ .
\end{equation}
Since, for $n\ge 1$, \eqref{char2} factor through reduced cyclic homology, \eqref{lchar1} induces
\begin{equation}
\la{trhomr}
\overline{\Tr}_V(A)_\bullet :\ \Sym(\HC_0(A)) \otimes \bL(\rHC_{\ge 1}(A)) \to \H_\bullet(A,\,V)\ ,
\end{equation}
where $ \rHC_{\ge 1}(A) := \bigoplus_{n \ge 1} \rHC_{n}(A) $.
\bprop
The image of the maps \eqref{lchar1} and \eqref{trhomr} is contained in $ \H_\bullet(A, V)^{\GL(V)}$.
\eprop

Our next goal is to construct an explicit morphism of complexes $\, T:\,\CC(A) \to R_V \,$ that induces
the trace maps \eqref{char2}.  Recall that if $ R \in \DGA_k $ is a DG algebra, its (reduced)
{\it bar construction} $ \Ba(R) $ is a (noncounital) DG coalgebra, which is a universal model
for twisting cochains with values in $R$ (see \cite{HMS}, Chap.~II). Explicitly, $ \Ba(R) $ can be
identified with  the tensor coalgebra
$\, \Ta(R[1]) = \bigoplus_{n \ge 1} R[1]^{\otimes n} \,$, the universal twisting cochain being
the canonical map $\, \hat{\theta}: \, \Ba(R) \to R $ of degree $-1$.

Now, let $ \pi: R \sonto A $ be a semi-free resolution of an algebra $A$ in $ \DGA_k^+$.
By functoriality of the bar construction, the map $ \pi $ extends to a surjective quasi-isomorphism
of DG coalgebras $ \Ba(R) \sonto \Ba(A)$, which we still denote by $ \pi $. This quasi-isomorphism has a
section $ f: \, \Ba(A) \into \Ba(R) $ in the category of DG coalgebras, that is uniquely determined
by the twisting cochain $ \theta_\pi := \hat{\theta}\,f:\, \Ba(A) \to R $.
The components $\,f_n:\,A^{\otimes n} \to R_{n-1}\,$, $\,n \ge 1\,$, of $ \theta_\pi $ satisfy the
Maurer-Cartan equations
\begin{eqnarray}
&& \pi\, f_1 = \id_A \la{rr1} \\
&& d_R f_2 =  f_1 m_A - m_R(f_1 \otimes f_1) \la{rr2} \\
&& d_R f_n =  \sum_{i=1}^{n-1} (-1)^{i-1} f_{n-1}(\id_A^{\otimes(i-1)} \otimes m_A \otimes \id_A^{\otimes (n-1-i)})
+ \sum_{i=1}^{n-1} (-1)^{i} m_R(f_i \otimes f_{n-i})\ , \quad n\ge 2\ . \la{rrn}
\end{eqnarray}
where $ m_A $ and $ m_R $ denote the multiplication maps of $A$ and $R$, respectively.
Giving the maps $ \,f_n:\,A^{\otimes n} \to R_{n-1}\,$ is equivalent to
giving a quasi-isomorphism of $A_\infty$-algebras $ f: A \to R $, which induces the inverse of $ \pi $ on
the level of homology. The existence of such a quasi-isomorphism is a well-known result in the theory of
$A_\infty$-algebras (see \cite{K}, Theorem~3.3). Since $ \pi: R \to A $ is a homomorphism of {\it unital}
algebras, we may assume that $ f $  is a (strictly) unital homomorphism of  $A_\infty$-algebras: this means
that, in addition to  \eqref{rr1}-\eqref{rrn}, we have the relations ({\it cf.} \cite{K}, Sect.~3.3)
\begin{equation}
\la{rru}
f_1(1) = 1 \quad ,\qquad f_n(a_1, a_2, \ldots, a_n) = 0 \ , \quad n\ge 2\ ,
\end{equation}
whenever one of the $ a_i$'s equals $1$.

To state the main result of this section, we fix a $k$-vector space $ V $ of (finite) dimension $d$
and, for the given semi-free resolution
$\, R \to A \,$, consider the DG algebra $ R_V = (\!\sqrt[V]{R})_\nn $. As explained in Remark following
Proposition~\ref{S2P1}, the elements of $ R_V $ can be written in the `matrix' form as the images of
$\, a_{ij} \in \sqrt[V]{R} \, $, see \eqref{S2E6}, under the commutativization map
$ \sqrt[V]{R} \onto R_V $.  With this notation, we have

\bthm[\cite{BKR}, Theorem~4.2]
\la{mcor}
The trace maps \eqref{char2} are induced by the morphism
of complexes $ T:\,\CC(A) \to R_V $, whose $n$-th graded component $\,
T_n:\,A^{\otimes (n+1)}/\im(1-t_n) \to (R_V)_n \,$ is given by
\begin{equation}
\la{trf}
T_{n}(a_1, a_2, \ldots, a_{n+1}) = \sum_{i = 1}^{d}\, \sum_{k \in \Z_{n+1}}
(-1)^{nk}  f_{n+1}(a_{1+k}, a_{2+k}, \ldots, a_{n+1+k})_{ii}\ ,
\end{equation}
where $ (f_1,\, f_2,\, \ldots) $ are defined by the relations \eqref{rr1}--\eqref{rrn} and \eqref{rru}.
\ethm
For $n=0$, it is easy to see that \eqref{trf} induces
$$
\Tr_V(A)_0:\,A \to \H_0(A, V) = A_V\ ,\quad a \mapsto \sum_{i = 1}^{d} \, a_{ii}\ ,
$$
which is the usual trace map on $ \Rep_V(A) $.

We can also write an explicit formula for the first trace  $\,\Tr_V(A)_1: \HC_1(A) \to \H_1(A,V) $. For this, we fix a
section $\,f_1: A \to R_0 \,$ satisfying \eqref{rr1}, and let $ \omega:\,A \otimes A \to R_0 $ denote its `curvature':
$$
\omega(a,b) := f_1(a b) - f_1(a) f_1(b)\ ,\quad a,b \in A \ .
$$
Notice that, by \eqref{rr1}, $\,\im\,\omega \subseteq \Ker\,\pi\,$. On the other hand,
$\,\Ker\,\pi = dR_1 \cong R_1/dR_2\,$, since $ R $ is acyclic in positive degrees. Thus,
identifying $\,\Ker\,\pi = R_1/dR_2\,$ via the differential on $R$, we get a map
$\, \tilde{\omega}:\,A \otimes A \to R_1/dR_2 \,$ such that
$\,d\tilde{\omega} = \omega \,$. Using this  map, we
define
\begin{equation}
\la{chern}
\ch_2:\, \CC_1(A) \to R_1/dR_2\ ,\quad
(a,b) \mapsto [\tilde{\omega}(a,b) - \tilde{\omega}(b,a)]\ \mbox{mod}\, dR_2\ .
\end{equation}
Since $\,\tilde{\omega} \equiv f_2\,(\mbox{mod}\, dR_2)\,$, {\it cf.} \eqref{rr2},
it follows from \eqref{trf} that $\,\Tr_V(A)_1 \,$ is induced by the map
\begin{equation}
\la{chern1}
 \Tr_V(A)_1:\, (a,b) \mapsto \sum_{i=1}^d\, \ch_2(a,b)_{ii} \ .
\end{equation}

\vspace{1ex}

\begin{remark}
The notation `$ \ch_2 $' for \eqref{chern} is justified by
the fact that this map coincides with the second Chern character in Quillen's Chern-Weil theory
of algebra cochains (see \cite{Q3}). It would be interesting to see whether the higher traces
$ \Tr_V(A)_n $ can be expressed in terms of higher Quillen-Chern characters.
\end{remark}

\vspace{1ex}

\subsection{Relation to Lie algebra homology}
\la{S4.5}
There is a close relation between representation homology and the
homology of matrix Lie algebras. To describe this relation we first recall
a celebrated result of Loday-Quillen \cite{LQ} and Tsygan \cite{T} which
was historically at the  origin of cyclic homology theory.

For a fixed $k$-algebra $A$ and finite-dimensional vector space $ W $,
let $ \H_\bullet(\gl_W(A); k) $ denote the homology of the Lie algebra $
\gl_W(A) := \mbox{Lie}(\End\,W \otimes A) $.
The Loday-Quillen-Tsygan Theorem  states that there are natural maps
\begin{equation}
\la{liehom}
\H_{n+1}(\gl_W(A); k) \to \HC_{n}(A) \ ,\quad \forall\,n \ge 0\ ,
\end{equation}
which, in the limit $\,\gl_W(A) \to \gl_\infty(A)\,$, induce an isomorphism of graded Hopf algebras
\begin{equation}
\la{liehom5}
\H_\bullet(\gl_{\infty}(A); k) \stackrel{\sim}{\to} \bL(\HC(A)[1])\ .
\end{equation}
Explicitly, the maps \eqref{liehom} are induced by the  morphisms of complexes
\begin{equation*}
\la{liem1}
\Lambda^{\bullet + 1}\,\gl_W(A) \xrightarrow{\vartheta_\bullet} \CC_\bullet(\End\,W \otimes A)
\xrightarrow{\tr_\bullet} \CC_\bullet(A)\ ,
\end{equation*}
where $\,\vartheta_\bullet\, $ is defined by
$$
\vartheta_n(\xi_0 \wedge \xi_1 \wedge \ldots \wedge \xi_n) = \sum_{\sigma \in S_n} \,
\mbox{\rm sgn}(\sigma)\,(\xi_0,\,\xi_{\sigma(1)}, \ldots , \xi_{\sigma(n)})\ ,
$$
and $ \tr_\bullet $ is given by the generalized trace maps
$\,\tr_n:\, (\End\,W \otimes A)^{\otimes (n+1)} \to A^{\otimes (n+1)}
\,$ (see \cite{L}, 10.2.3).

It turns out that there is a natural map relating the Lie algebra homology $ \H_{\bullet}(\gl_W(A); k) $
to representation homology of $A$.
To construct this map we will realize the Lie algebra homology as Quillen homology of the category
$ \DGL^+_k $  of non-negatively graded DG Lie algebras ({\bf cf.} Example~1 in Section~\ref{AQh}).
This category has a natural model structure, which is compatible with
the model structure on $ \DGA_k $ via the forgetful functor $ \mbox{Lie}:\,\DGA^+_k \to \DGL^+_k $
(see \cite{Q2}, Part II, Sect. 5). Fix a cofibrant resolution $\,\alpha: R \sonto A $ of $A$
in $\DGA^+_k$. Then, for each finite-dimensional vector space $ W $, tensoring $ \alpha $ by
$ \End(W) $ yields an acyclic fibration in $ \DGA_k^+ $ which, in turn, yields (via the forgetful functor)
an acyclic fibration in $ \DGL_k^+\,$:
$$
\tilde{\alpha}:\ \gl_W(R) \sonto \gl_W(A)\ .
$$
Now, let $\,\beta: \, {\mathfrak L}_W \sonto  \gl_W(A) \,$ be a cofibrant resolution of $ \gl_W(A) $
in  $ \DGL_k^+ $. Since  $ \tilde{\alpha} $ is an acyclic fibration, $\, \beta \,$ lifts through
$ \tilde{\alpha} $ giving a quasi-isomorphism
$\, \tilde{\beta}: {\mathfrak L}_W \sonto  \gl_W(R)  \,$.
Combining this quasi-isomorphism with traces induces the map of complexes
\begin{equation}
\la{ul2}
{\mathfrak L}_W /[{\mathfrak L}_W, {\mathfrak L}_W]  \xrightarrow{\tilde \beta} \gl_W(R)/[ \gl_W(R), \gl_W(R)]
= (\End\, W \otimes R)_\n \cong R_\n  \xrightarrow{\Tr_V} R_V\ .
\end{equation}
Now, for any cofibrant resolution
$ {\mathfrak L} \sonto \g $ of  in $ \DGL_k^+ $, the complex $ {\mathfrak L} /[{\mathfrak L}, {\mathfrak L}]  $ computes the Lie algebra homology
of $ \g $ with trivial coefficients, see \eqref{glie}. Thus, for any $V$ and $ W $, \eqref{ul2} induces the maps
\begin{equation}
\la{ul3}
\H_{n+1}(\gl_W(A); k) \to \H_n(A, V)\ , \quad n \ge 0\ .
\end{equation}
Letting $ W = k^d $ and taking the inductive limit (as $\, d \to \infty $), we identify
$$
 \varinjlim\,\H_{\bullet}(\gl_W(A);\, k) \cong
\H_\bullet(\varinjlim\,\gl_W(A) ;\,k) = \H_{\bullet}(\gl_\infty(A); k)\ .
$$
With this indentification, \eqref{ul3}  induces the maps
\begin{equation}
\la{liem}
\H_{n+1}(\gl_\infty(A); k) \to \H_n(A, V) \  ,\quad  \forall\,n \geq 0\ .
\end{equation}
\bthm[\cite{BKR}, Theorem~4.3]
\la{LQTT}
For each $ n \ge 0 $, the maps \eqref{ul3} and \eqref{liem} factor through the Loday-Quillen-Tsygan map
\eqref{liehom}. The induced maps are precisely the trace maps \eqref{char2}.
\ethm
Note that for $n=0$, the map \eqref{ul3} is simply the composition of obvious traces
\begin{equation*}
\la{liem0}
\H_{1}(\gl_W(A); k) \cong \gl_W(A)/[\gl_W(A),\, \gl_W(A)] = (\End\,W \otimes A)_\n \stackrel{\sim}{\to} A_\n \to A_V\ ,
\end{equation*}
so the claim of Theorem~\ref{LQTT} is immediate in this case.

\vspace{1ex}

\begin{remark}
The homology of a Lie algebra with trivial coefficients has a natural coalgebra structure ({\it cf.} \cite[10.1.3]{L}). One can show that the degree $(-1) $ map
$\, \tau: \H_{\bullet}(\gl_W(A); k) \to \H_\bullet(A, V)^{\GL(V)} $ defined by \eqref{ul3} is
a twisting cochain with respect to the coalgebra structure on $ \H_{\bullet}(\gl_W(A); k) $. In the stable limit (see Section~\ref{stabi} below), $\,\tau\,$
becomes an {\it acyclic} twisting cochain, which means that  the Lie algebra homology of $ \gl_\infty(A) $ is {\it Koszul dual}\, to the stable representation homology of $A$. For a precise statement of this result and its implications we refer the reader to \cite[Section~5]{BR} (see, in particular, {\it op. cit.}, Theorem~5.2). \end{remark}

\subsection{Stabilization theorem}
\la{stabi}
If $A$ is an ordinary algebra, a fundamental theorem of Procesi \cite{P} implies that
the traces of elements of $A$ generate the algebra $ A_V^{\GL(V)} $; in other words,
the algebra map
\begin{equation}
\la{proc}
\Sym[\Tr_V(A)_0]:\ \Sym[\HC_0(A)] \to A_V^{\GL(V)}
\end{equation}
is surjective for all $ V$.  A natural question is whether this result extends to higher traces:
namely, is the full trace map
\begin{equation}
\la{E19S4}
\bL \Tr_V(A)_\bullet :\ \bL[\HC(A)] \to \H_{\bullet}(A, V)^{\GL(V)}
\end{equation}
surjective?  We address this question in the forthcoming paper \cite{BR}, where
by analogy with matrix Lie algebras (see \cite{T, LQ}) we approach it in two
steps. First, we `stabilize' the family of maps
\eqref{E19S4} passing to an infinite-dimensional limit $ \dim_k V \to \infty $ and prove
that \eqref{E19S4} becomes an isomorphism in that limit. Then, for a finite-dimensional $V$,
we construct obstructions to $ \H_{\bullet}(A, V)^{\GL(V)} $ attaining its `stable limit'.
These obstructions arise as homology of a complex that measures the failure of \eqref{E19S4}
being surjective. Thus, the answer to the above question is negative.
A simple counterexample will be given in Section~\ref{dualnm} below.

We conclude this section by briefly explaining the stabilization procedure of \cite{BR}.
We will work with unital DG algebras $A$ which are {\it augmented} over $k$. We recall that the
category of such DG algebras is naturally equivalent to the category of non-unital DG algebras,
with $ A $ corresponding to its augmentation ideal $ \bar{A} $. We identify these two categories
and denote them by $ \DGA_{k/k} $. Further, to simplify the notation we take $ V = k^d $ and
identify $\, \End\,V = \M_d(k) \,$, $\, \GL(V) = \GL_k(d) \,$; in addition, for $ V = k^d $,
we will write  $ A_V $ as $ A_d $.
Bordering a matrix in $ \M_d(k) $ by $0$'s on the right and on the bottom gives an embedding
$\,\M_d(k) \into \M_{d+1}(k) \,$ of non-unital algebras. As a result, for each $ B \in \cDGA_k $,
we get a map of sets
\begin{equation}
\la{isoun0}
\Hom_{\DGA_{k/k}}(\bar{A},\,\M_d(B)) \to \Hom_{\DGA_{k/k}}(\bar{A},\,\M_{d+1}(B))
\end{equation}
defining a natural transformation of functors from $ \cDGA_k $ to $ \Sets $.
Since $B$'s are unital and $ A $ is augmented, the restriction maps
\begin{equation}
\la{isoun1}
\Hom_{\DGA_k}(A,\,\M_d(B)) \stackrel{\sim}{\to} \Hom_{\DGA_{k/k}}(\bar{A},\,\M_d(B)) \ ,\quad
\varphi \mapsto \varphi|_{\bar{A}}
\end{equation}
are isomorphisms for all $ d \in \N $. Combining \eqref{isoun0} and \eqref{isoun1},
we thus have natural transformations
\begin{equation}
\la{isoun}
\Hom_{\DGA_k}(A,\,\M_d(\,\mbox{--}\,)) \to \Hom_{\DGA_k}(A,\,\M_{d+1}(\,\mbox{--}\,))\ .
\end{equation}
By standard adjunction, \eqref{isoun} yield an inverse system of
morphisms $\,\{ \mu_{d+1, d}: A_{d+1} \to A_d \} \,$ in $ \cDGA_k $. Taking the limit of this
system, we define
$$
A_{{\infty}} := \varprojlim_{d\,\in\,\mathbb N} A_d \ .
$$
Next, we recall that the group $ \GL(d) $ acts naturally on $ A_d $,
and it is easy to check that $\,\mu_{d+1, d}: A_{d+1} \to A_d\,$ maps the subalgebra
$ A_{d+1}^{\GL} $ of $\GL $-invariants in $ A_{d+1} $ to the subalgebra $ A_d^{\GL}$ of
$\GL $-invariants in $A_d$. Defining $ \GL(\infty) := \varinjlim\, \GL(d) $ through
the standard inclusions $ \GL(d) \into \GL(d+1) $, we extend the actions of $ \GL(d) $
on $ A_d $ to an action of $ \GL(\infty) $ on $ A_{\infty} $ and let $ A^{\GL(\infty)}_{{\infty}} $
denote the corresponding invariant subalgebra. Then one can prove (see~\cite{T-TT})
\begin{equation}
\la{isolim}
A^{\GL(\infty)}_{{\infty}} \cong \varprojlim_{d\,\in\,\mathbb N} A^{\GL(d)}_d \ .
\end{equation}
This isomorphism  allows us to equip $ A^{\GL(\infty)}_{{\infty}} $ with a natural topology:
namely, we put first the discrete topology on each $ A^{\GL(d)}_d $ and equip
$\,\prod_{d \in \N} A^{\GL(d)}_d \,$ with the product topology; then, identifying
$ A^{\GL(\infty)}_{{\infty}} $ with a subspace in
$\,\prod_{d \in \N} A^{\GL(d)}_d \,$ via \eqref{isolim}, we put on
$ A^{\GL(\infty)}_{{\infty}} $ the induced topology. The
corresponding topological DG algebra will be denoted $ A^{\GL}_{{\infty}} $.

Now, for each $ d \in \N $, we have the commutative diagram
\[
\begin{diagram}[small, tight]
  &       &     \FT(A)     &        & \\
  &\ldTo^{\Tr_{d+1}(A)_\bullet}  &           &  \rdTo^{\Tr_{d}(A)_\bullet} &  \\
A_{d+1}^{\GL} &       & \rTo^{\mu_{d+1, d}}  &        &  A_d^{\GL}
\end{diagram}
\]
where $\, \FT(A) \,$ is the cyclic functor restricted to $ \DGA_{k/k} $ ({\it cf.} Section~\ref{S4.1}).
Hence, by the universal property of inverse limits, there is a morphism of complexes $\, \Tr_{\infty}(A)_\bullet :\,
\FT(A) \to A^{\GL}_{{\infty}}\,$ that factors $ \Tr_{d}(A)_\bullet $ for each $ d \in \N $. We extend this morphism
to a homomorphism of commutative DG algebras:
\begin{equation}
\la{trinf}
\Tr_{\infty}(A)_\bullet :\ \bL[\FT(A)] \to A^{\GL}_{{\infty}}\ .
\end{equation}

The following lemma is one of the key technical results of \cite{BR} (see {\it loc. cit.}, Lemma~3.1).
\blemma
\la{dense}
The map \eqref{trinf} is {\rm topologically} surjective: {\it i.e.}, its image is dense in $ A^{\GL}_{{\infty}} $.
\elemma

\vspace{1ex}

\noindent
Letting $ A_\infty^{\Tr}  $ denote the image of \eqref{trinf}, we
define the functor
\begin{equation}
\la{trfun}
(\,\mbox{--}\,)^{\Tr}_{\infty}\,:\ \DGA_{k/k} \to \cDGA_k\ ,\quad A \mapsto A_\infty^{\Tr}\ .
\end{equation}
The algebra maps \eqref{trinf} then give a morphism of functors
\begin{equation}
\la{morfun}
\Tr_{\infty}(\,\mbox{--}\,)_\bullet :\ \bL[\FT(\,\mbox{--}\,)] \to (\,\mbox{--}\,)^{\Tr}_{\infty}\ .
\end{equation}

Now, to state the main result of \cite{BR} we recall that
the category of augmented DG algebras $ \DGA_{k/k} $ has a natural model structure induced from
$ \DGA_k $. We also recall the derived Feigin-Tsygan functor
$\, \L\FT(\,\mbox{--}\,):\, \Ho(\DGA_{k/k}) \to \Ho(\cDGA_k)\, $ inducing the isomorphism of
Proposition~\ref{S4P1}$(b)$.
\bthm[\cite{BR}, Theorem~4.2]
\la{eqfun}
$(a)$ The functor \eqref{trfun} has a total left derived functor
$\,\L(\,\mbox{--}\,)^{\Tr}_{\infty}\,:\ \Ho(\DGA_{k/k}) \to \Ho(\cDGA_k)\,$.

$(b)$ The morphism \eqref{morfun} induces an isomorphism of functors
$$
\Tr_{\infty}(\,\mbox{--}\,)_\bullet :\  \bL[\L\FT(\,\mbox{--}\,)] \stackrel{\sim}{\to}
\L (\,\mbox{--}\,)^{\Tr}_{\infty}\ .
$$
\ethm
\noindent
By definition, $\, \L(\,\mbox{--}\,)^{\Tr}_{\infty} $ is given by
$\,
\L(A)^{\Tr}_{\infty} = (QA)^{\Tr}_{\infty}\,$,
where $ QA $ is a cofibrant resolution of $A$ in $ \DGA_{k/k}\,$. For an ordinary augmented $k$-algebra $A \in \Alg_{k/k} $, we set
$$
\DRep_\infty(A)^\Tr :=  (QA)^{\Tr}_{\infty} \ .
$$
By part $(a)$ of Theorem~\ref{eqfun}, $ \DRep_\infty(A)^\Tr $ is well defined. On the other hand, part $(b)$ implies
\begin{corollary}
\la{corf1}
For any $ A \in \Alg_{k/k} $, $\, \Tr_{\infty}(A)_\bullet $ induces an isomorphism of graded commutative algebras
\begin{equation}
\la{funhc}
\bL[\rHC(A)] \cong \H_\bullet[\DRep_\infty(A)^\Tr]\ .
\end{equation}
\end{corollary}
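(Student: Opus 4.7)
The plan is to deduce Corollary~\ref{corf1} directly from Theorem~\ref{eqfun}(b) by passing to homology on both sides of the functorial isomorphism $\Tr_{\infty}(\mbox{--})_\bullet : \bL[\L\FT(\mbox{--})] \stackrel{\sim}{\to} \L(\mbox{--})^{\Tr}_{\infty}$ and then identifying each side explicitly. First, I would specialize to an ordinary augmented algebra $A \in \Alg_{k/k}$. By definition, $\L(A)^{\Tr}_{\infty} = (QA)^{\Tr}_{\infty} = \DRep_\infty(A)^\Tr$, so taking $\H_\bullet$ of the right-hand side yields $\H_\bullet[\DRep_\infty(A)^\Tr]$, which is the target of the isomorphism in the corollary.

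Next, I would identify $\H_\bullet[\bL \L\FT(A)]$ with $\bL[\rHC(A)]$. This proceeds in two steps. Step one applies Proposition~\ref{S4P1}(b), appropriately interpreted for the derived Feigin-Tsygan functor restricted to $\DGA_{k/k}$, to obtain a natural isomorphism $\H_n[\L\FT(A)] \cong \rHC_n(A)$ for all $n \ge 0$ (the case $n=0$ follows by a direct computation of $\cn(k \to A_\n)$, in agreement with $\rHC_0(A) = \bar{A}/[\bar{A},\bar{A}]$). Step two is the observation that, because $k$ has characteristic zero, the graded symmetric algebra functor $\bL$ commutes with homology: for any chain complex $C$ of $k$-vector spaces one has a natural isomorphism $\H_\bullet[\bL(C)] \cong \bL[\H_\bullet(C)]$ of graded commutative algebras. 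This is a standard fact, deducible from the decomposition of $C$ into its homology plus an acyclic complement (every complex over a field splits up to quasi-isomorphism), together with the exactness of the symmetric and exterior power functors in characteristic zero---the symmetrization and antisymmetrization idempotents are available in $k[\mathfrak{S}_n]$, so $\Sym^n$ and $\Lambda^n$ of an acyclic complex are acyclic, whence $\bL$ preserves quasi-isomorphisms of complexes of $k$-vector spaces.

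Chaining the identifications, the corollary takes the form $\bL[\rHC(A)] \cong \H_\bullet[\bL \L\FT(A)] \cong \H_\bullet[\DRep_\infty(A)^\Tr]$, and by construction this composite isomorphism is induced by $\Tr_{\infty}(A)_\bullet$, as required. The substantive content has all been discharged into Theorem~\ref{eqfun}(b) itself, whose proof is built on Lemma~\ref{dense} and a careful analysis of the stable invariant theory of representations; the present argument only unwinds definitions and invokes a standard K\"unneth-type statement for $\bL$ over a characteristic zero base. The only place that requires care is the naturality of the two intermediate identifications, ensuring that the final isomorphism agrees with the map induced by $\Tr_{\infty}(A)_\bullet$ and not merely some abstract isomorphism of graded commutative algebras---this is a routine diagram chase using that both Proposition~\ref{S4P1}(b) and the K\"unneth identification are natural in the underlying complex.
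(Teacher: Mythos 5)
Your proposal is correct and follows the same route the paper intends: the paper simply asserts that Theorem~\ref{eqfun}(b) ``implies'' the corollary, and you have filled in the routine steps --- unwinding $\L(A)^{\Tr}_{\infty} = \DRep_\infty(A)^\Tr$, invoking Proposition~\ref{S4P1}(b) to identify $\H_\bullet[\L\FT(A)]$ with $\rHC_\bullet(A)$ (including the small degree-zero check), and using that $\bL$ commutes with homology in characteristic zero because $\Sym^n$ and $\Lambda^n$ are exact and every complex of $k$-vector spaces splits as its homology plus an acyclic complement. The naturality remark at the end is the right thing to flag, since the corollary asserts the isomorphism is induced specifically by $\Tr_\infty(A)_\bullet$.
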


In fact, one can show that $ \H_\bullet[\DRep_\infty(A)^\Tr] $
has a natural structure of a graded Hopf algebra,
and the isomorphism of Corollary~\ref{corf1} is actually an
isomorphism of Hopf algebras. This isomorphism is analogous
to the Loday-Quillen-Tsygan isomorphism \eqref{liehom5} computing the
stable homology of matrix Lie algebras $ \gl_n(A)$ in terms of
cyclic homology. Heuristically, it implies that the cyclic homology
of an augmented algebra is determined by its representation homology.

\section{Abelianization of the Representation Functor}
\la{S5}
``If homotopical algebra is thought of as `nonlinear' or `non-additive' homological algebra, then
it is natural to ask what is the `linearization' or `abelianization' of this situation'' (Quillen, \cite{Q1},
\S~II.5). In Section~\ref{AQh}, following Quillen, we defined the abelianization of a model category
$ \C $ as the category $ \C^{\rm ab} $ of abelian group objects in $ \C $.  As a next step, one should
ask for abelianization of a functor $ F: \C \to \D $ between model categories. We formalize this notion
in Section~\ref{AbFun} below, and then apply it to our representation functor
$\, (\mbox{---})_V:\, \DGA_k \to \cDGA_k \,$. As a result, for a given algebra $ A \in \DGA_k $,
we get an additive left Quillen functor
\begin{equation}
\la{E1}
(\mbox{---})^{\rm ab}_V:\ \DGBimod(A) \to \DGMod(A_V)\ ,
\end{equation}
relating the category of DG bimodules over $ A $ to DG modules over $ A_V $. In the case of ordinary algebras, this functor was introduced by M. Van den Bergh \cite{vdB}. He found that \eqref{E1} plays a special role in noncommutative geometry of smooth algebras, transforming noncommutative objects on $A$ to classical geometric objects on
$ \Rep_V(A) $.  Passing from $ \Rep_V(A) $ to $ \DRep_V(A) $, we constructed in \cite{BKR} the derived functor of \eqref{E1}  and showed that it plays a similar role in the geometry of arbitrary (not necessarily smooth) algebras. The original definition
of \eqref{E1} in \cite{vdB} is given by an explicit but somewhat {\it ad hoc} construction ({\it cf.} \eqref{E111} below).
Characterizing Van den Bergh's functor as abelianization of the representation functor provides a conceptual explanation of the results of \cite{vdB} and \cite{BKR}.  At the derived level, this also leads to a new spectral sequence relating representation homology to Andr\`e-Quillen homology (see Section~\ref{RHAQ} below).

\subsection{Abelianization as a Kan extension}
\la{AbFun}
Let $\,F: \C \to \D \,$ be a right exact ({\it i.e.}, compatible with finite colimits) functor between model categories.
As in Section~\ref{AQh}, we assume that $ \C^{\rm ab} $ and $ \D^{\rm ab}$ are abelian categories with enough projectives
and the abelianization functors $\, \Ab_\C :\, \C \to \C^{\rm ab}  \,$ and  $\, \Ab_\D :\, \D \to \D^{\rm ab}  \,$ exist and form Quillen pairs, see \eqref{Abi}. In general, $ F $ may
not descend to an additive functor $\,F^{\rm ab}: \C^{\rm ab} \to \D^{\rm ab} \,$ that would complete the commutative diagram
\begin{equation}
\la{dab1}
\begin{diagram}[small, tight]
\C               & \rTo^{F}    &  \D        \\
\dTo^{\Ab_\C}          &                  & \dTo_{\Ab_\D}   \\
  \C^{\rm ab}                & \rDotsto^{F^{\rm ab}}     &   \D^{\rm ab}
\end{diagram}
\end{equation}
Following a standard categorical approach (see \cite[Chapter~X]{ML}), we remedy
this problem in two steps. First, we define the `best left approximation' to
$ F^{\rm ab} $ (which we call the left abelianization) as a right Kan extension of
$\, \Ab_\D \circ F \,$ along $ \Ab_\C $.
Precisely, the {\it left abelianization} of $ F $ is a right exact additive functor
$\, F_l^{\rm ab}: \C^{\rm ab} \to \D^{\rm ab} \,$ together with a natural transformation $\,t : F_l^{\rm ab} \circ  \Ab_\C \to \Ab_\D \circ F\,$ satisfying the following universal property:
\begin{quote}
For any pair $ (G, s) $ consisting of a  right exact additive functor $\, G: \C^{\rm ab} \to \D^{\rm ab} \,$
and a natural transformation $\, s: G  \circ  \Ab_\C \to \Ab_\D \circ F \,$, there is a unique natural
transformation $\, s' : G \to F_l^{\rm ab} \,$ such that the following diagram commutes:
\begin{equation}
\la{dab2}
\begin{diagram}[small, tight]
G\circ\Ab_{\C} &                        &  \rTo^{s}      &                &  \Ab_{\D} \circ F  \\
             & \rdDotsto_{s' \, \Ab_\C}\quad  &                &  \ruTo_{t} &  \\
             &                      & F_l^{\rm ab} \circ \Ab_{\C} &               &
\end{diagram}
\end{equation}
\end{quote}
Next, we say that $ F $ is {\it abelianizable} if $\, F_l^{\rm ab} \,$ exists,
and the corresponding natural transformation $\,t : F_l^{\rm ab} \circ  \Ab_\C \to \Ab_\D \circ F\,$
is a natural equivalence. In this case, we drop the subscript in $\, F_l^{\rm ab} \,$
and call  $ F^{\rm ab} $ the {\it abelianization} of $ F $. As usual, the above universal property guarantees
that when it exists, the functor $\,F^{\rm ab} : \C^{\rm ab} \to \D^{\rm ab} \,$ is unique up to a canonical isomorphism.

%
%
%

\subsection{The Van den Bergh functor}
\la{S5.1}
In this section, we assume for simplicity that $S = k$ and $ V $ is concentrated in degree $0$.
Given $\, R \in \DGA_k \,$, let $\,\pi_V:\,R \to \End\,V \otimes \rtv{R} \,$ denote the universal DG algebra homomorphism,
see Proposition~\ref{S2P1}$(a)$. The complex
$\,\rtv{R} \otimes V \,$ is naturally a left DG module over $ \End\,V \otimes \rtv{R}  $ and right DG module
over $ \rtv{R} \,$, so restricting the left action via $ \pi $ we can regard  $\,\rtv{R} \otimes V \,$ as a DG
bimodule over $R$ and $\rtv{R}$. Similarly, we can make $\,V^* \otimes \rtv{R}$ a $\,\rtv{R}$-$R$-bimodule.
Using these bimodules, we define the functor
\begin{equation}
\la{rtvm}
\rtv{-}:\ \DGBimod(R) \to \DGBimod(\rtv{R}) \ , \quad
M \mapsto (V^* \otimes \sqrt[V]{R}) \otimes_R M \otimes_R (\sqrt[V]{R} \otimes V) \ .
\end{equation}
Now, recall that $\,R_V := (\rtv{R})_\nn\,$ is a commutative DGA. Using the natural projection
$\,\rtv{R} \to R_V \,$, we regard $R_V$ as a DG bimodule over $ \rtv{R} $ and define
\begin{equation}
\la{abb}
(\mbox{---})_\nn:\ \DGBimod(\rtv{R}) \to \DGMod(R_V) \ , \quad
M \mapsto M_\nn := M \otimes_{(\rtv{R})^{\rm e}} R_V  \ .
\end{equation}
Combining \eqref{rtvm} and \eqref{abb}, we get the functor
\begin{equation}
\la{E11}
(\mbox{---})^{\rm ab}_V:\ \DGBimod(R) \to \DGMod(R_V)\ , \quad M \mapsto
M^{\rm ab}_V := (\rtv{M})_\nn \ .
\end{equation}
It is easy to check that, for any $ M \in \DGBimod(R) $, there is a canonical isomorphism of $R_V$-modules
\begin{equation}
\la{E111}
M^{\rm ab}_V = M \otimes_{R^{\rm e}} (\End\,V \otimes R_V)\ .
\end{equation}
Thus,  \eqref{E11} is indeed a DG extension of Van den Bergh's functor defined
in \cite{vdB}, Section~3.3.

The next lemma is analogous to Proposition~\ref{S2P1} for DG algebras. We recall that, if $ R $ is a DG algebra and
$M$,$\,N$ are DG modules over $R$, the  morphism complex $\, \HOM_R(M,\,N) \,$ is a complex of vector spaces with
$n$-th graded component consisting of all $R$-linear maps $\,f:\, M \to N\,$ of degree $ n $ and
the $n$-th differential given by $\,d(f) = d_N \circ f - (-1)^n f \circ d_M\,$.
\blemma[\cite{BKR}, Lemma~5.1]
\la{L2}
For any $\, M \in \DGBimod(R)$, $\,N \in \DGBimod(\rtv{R}) \,$ and
$\, L \in \DGMod(R_V) \,$, there are canonical isomorphisms of complexes

\vspace{0.8ex}

$(a)$ $\,\HOM_{(\rtv{R})^{\rm e}}(\rtv{M},\,N)
\cong \HOM_{\eR}(M,\,\End\,V \otimes N) \,$,

\vspace{0.8ex}

$(b)$ $\, \HOM_{R_V}(M^{\rm ab}_V,\,L) \cong \HOM_{\eR}(M,\,\End\,V \otimes L) \,$.

\elemma

\vspace{1ex}

\noindent
{\bf Example.}
Let $\, \Omega^1 R \, $ denote the kernel of the multiplication map $\,R \otimes R \to R\,$ of a DG algebra $R$.
This is naturally a DG bimodule over $ R $, which, as in the case of ordinary algebras, represents the complex
of graded derivations $\,\DER (R,\,M) \,$, {\it i.e.}  $\, \DER (R,\,M) \cong \HOM_{\eR}(\Omega^{1} R,\,M) \,$
for any $\, M \in \DGBimod(R) \,$ (see, e.g., \cite{Q3}, Sect.~3). Lemma~\ref{L2} then implies canonical isomorphisms
\begin{equation}
\la{omvdb}
\rtv{\Omega^1 R} \cong \Omega^1(\rtv{R}) \quad , \quad
(\Omega^1 R)^{\rm ab}_V \cong  \Omega_{\rm com}^1(R_V)\ .
 \end{equation}
To prove \eqref{omvdb} it suffices to check that $ \Omega^1(\rtv{R}) $ and $  \Omega_{\rm com}^1(R_V) $
satisfy the adjunctions of Lemma~\ref{L2} and then appeal to Yoneda's Lemma.  We leave this as an exercise
to the reader.

\vspace{1ex}

We are now in position to state the main theorem of this section. This theorem justifies, in particular,
our notation for the functor \eqref{E11}.
\bthm
\la{abvdb}
The functor \eqref{E11} is the abelianization of the representation functor \eqref{S2E10}.
\ethm
\bproof
Given a DG algebra $ R \in \DGA_k $, we set $ \C := \DGA_k/R $ and $ \D := \cDGA_k/R_V$.
Then, as in Section~\ref{AQh} (see Example 2 and Example 3), we can identify
$ \C^{\rm ab} = \DGBimod(R) $ and $ \D^{\rm ab} = \DGMod(R_V) $. Under this identification,
the abelianization functors $ \Ab_\C $ and $ \Ab_\D $ become
$$
\Omega^1( \mbox{--}/R):\, \DGA_k/R \, \to \, \DGBimod(R)\,,\quad
B \mapsto R \otimes_B \Omega^1(B) \otimes_B R\ ,
$$
$$
\Omega_{\rm com}^1( \mbox{--}/R_V):\, \cDGA_k/R_V  \,\to \, \DGMod(R_V)\,,\quad
B  \mapsto R_V \otimes_B \Omega_{\rm com}^1(B) \ ,
$$
where $ \Omega^1(B) $ and $ \Omega^1_{\rm com}(B) $ are the modules of noncommutative and
commutative (K\"ahler) differentials, respectively. We prove Theorem~\ref{abvdb} in two steps.
First, we show that for the functor \eqref{E11}, there is a canonical natural equivalence
\begin{equation}
\la{ciso}
t:\   (\mbox{---})^{\rm ab}_V \circ \Omega^1( \mbox{--}/R)\, \stackrel{\sim}{\to}\,  \Omega_{\rm com}^1( \mbox{--}/R_V) \circ
(\mbox{---})_V
\end{equation}
which makes \eqref{dab1} a commutative diagram. Then, we verify the universal property stated in Section~\ref{AbFun}.

To establish \eqref{ciso} we will use the Yoneda Lemma. For any $ B \in \DGA_k/R $ and $ L \in \DGMod(R_V) $,
Lemma~\ref{L2} together with \eqref{omvdb} gives natural isomorphisms:
\begin{eqnarray*}
\Hom_{R_V}(\Omega^1(B/R)^{\rm ab}_V,\,L)
&\cong& \Hom_{R^{\rm e}}(\Omega^1(B/R),\, \End(V) \otimes L) \\
&\cong& \Hom_{B^{\rm e}}(\Omega^1(B),\, \End(V) \otimes L)   \\
&\cong& \Hom_{B^{\rm e}}(\rtv{\Omega^1(B)},\, L)  \\
&\cong& \Hom_{B_V}((\rtv{\Omega^1(B)})_\nn,\, L)  \\
&\stackrel{{\rm df}}{=} & \Hom_{B_V}(\Omega^1(B)^{\rm ab}_V,\, L)  \\
&\cong & \Hom_{B_V}(\Omega_{\rm com}^1(B_V),\, L) \\
&\cong&  \Hom_{R_V}(R_V \otimes_{B_V} \Omega_{\rm com}^1(B_V),\, L)  \ .
\end{eqnarray*}
Hence,  $ \Omega^1(B/R)^{\rm ab}_V $ is canonically isomorphic to $ R_V \otimes_{B_V} \Omega_{\rm com}^1(B_V) $, which is
equivalent to \eqref{ciso}.

To verify  the universal property for abelianization we will use the functorial isomorphism
\begin{equation}
\la{fiso}
M = \Coker [\,\Omega^1(R) \to \Omega^1(T_R M/R)\,]
\end{equation}
where $ T_R M $ is the tensor algebra of $ M $ equipped with the canonical projection
$ T_R M \onto R $. This isomorphism follows from the standard cotangent
sequence for the tensor algebra $ T = T_R M $
$$
T \otimes_R \Omega_k^1(R) \otimes_R T \to \Omega_k^1(T) \to T \otimes_R M \otimes_R T \to 0\ ,
$$
which is proved, for example, in \cite{CQ} (see {\it loc. cit.}, Corollary~2.10).

Now, given a right exact additive functor $\, G: \DGBimod(R) \to \DGMod(R_V) \,$
with natural transformation
\begin{equation*}
\la{giso}
s:\   G \circ \Omega^1( \mbox{--}/R)\, \to\,  \Omega_{\rm com}^1( \mbox{--}/R_V) \circ  (\mbox{---})_V
\end{equation*}
we compose $ s $ with the inverse of \eqref{ciso}  and use \eqref{fiso} to define the $B_V$-module maps
$$
G(M) = \Coker[\,G \circ \Omega^1(R) \to G \circ \Omega^1(T_R M/R)\,]\,
\xrightarrow{s_M'} \,
\Coker[\, (\mbox{--})^{\rm ab}_V \circ  \Omega^1(R) \to (\mbox{--})^{\rm ab}_V \circ  \Omega^1(T_R M/R)\,] = M_V^{\rm ab}
$$
The maps $ s_M' $ define a natural transformation $ s': G \to (\mbox{---})^{\rm ab}_V $
making \eqref{dab2} commutative. This proves the required universal property and finishes the proof of the
theorem.
\eproof

Now, as in the case of DG algebras ({\it cf.} Theorem~\ref{S2T2}), Lemma~\ref{L2} easily implies
\bthm
\la{S5T2}
$(a)$ The functors $\,(\,\mbox{--}\,)^{\rm ab}_V :\,\DGBimod(R)
\rightleftarrows \DGMod(R_V)\,: \End\,V \otimes {\mbox{--}}\,$
form a Quillen pair.

$(b)$ $\, (\,\mbox{--}\,)^{\rm ab}_V $ has a total left derived functor
$$
\L(\,\mbox{--}\,)^{\rm ab}_V:\,\D(\DGBimod\, R) \to \D(\DGMod\, R_V)
$$
which is left adjoint to the exact functor $\, \End\,V \otimes {\mbox{--}} \,$.
\ethm

Now, for ordinary algebras,  the derived Van den Bergh functor can be defined
using  a standard procedure in differential homological algebra
({\it cf.} \cite{HMS}, \cite{FHT}). Given $ A \in \Alg_k $ and a complex $M$ of
bimodules over $A$,  we first choose a semi-free resolution $\,f: R \to A\,$ in $\, \DGA_k \,$
and consider $ M $ as a DG bimodule over $ R $ via $f$. Then, we choose a semi-free resolution
$\,F(R,M) \to M \,$  in the category $ \DGBimod(R) $ and apply to $F(R,M)$
the functor \eqref{E11}.  Combining Theorem~\ref{S5T2} with
Proposition~\ref{Kelpr} in Section~2.4, we get
\begin{corollary}
\la{T2}
Let $ A \in \Alg_k $, and let $ M $ be a complex of bimodules over $A$.
The assignment $\,M \mapsto F(R,M)_V\,$ induces a well-defined functor
$$
\L(\mbox{---})^{\rm ab}_V:\ \D(\Bimod\,A) \to \D(\DGMod\,R_V)\ ,
$$
which is independent of the choice of the resolutions $ R \to A $ and $ F \to M $ up to auto-equivalence of
$\, \D(\DGMod\,R_V) $ inducing the identity on homology.
\end{corollary}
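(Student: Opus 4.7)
The plan is to assemble the derived functor from two Quillen-theoretic inputs already in place: Theorem~\ref{S5T2}, which gives for each fixed semi-free resolution $f:R\sonto A$ a total left derived functor $\L(-)^{\rm ab}_V:\D(\DGBimod\,R)\to\D(\DGMod\,R_V)$ computed on $N\in\DGBimod(R)$ by $N\mapsto F(R,N)_V$ for any semi-free resolution $F(R,N)\sonto N$; and Proposition~\ref{Kelpr}, which promotes the restriction of scalars along $f^{\rm e}:R^{\rm e}\to A^{\rm e}$ to a Quillen equivalence $\DGBimod(R)\rightleftarrows \Bimod(A)$. Note that $f^{\rm e}$ is a quasi-isomorphism because $f$ is and we are over a field (by K\"unneth). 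Composing the induced derived equivalence $\D(\Bimod\,A)\stackrel{\sim}{\to}\D(\DGBimod\,R)$ with $\L(-)^{\rm ab}_V$ produces the sought functor, and for a bimodule complex $M$ the recipe is exactly as stated: regard $M$ as a DG $R$-bimodule, pick a semi-free resolution $F(R,M)\sonto M$ in $\DGBimod(R)$, and return $F(R,M)_V$.

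Independence from the choice of $F(R,M)$ is automatic from Theorem~\ref{S5T2}(b): any two semi-free (hence cofibrant) resolutions of $M$ in $\DGBimod(R)$ are connected by weak equivalences between cofibrant objects, and by Lemma~\ref{BrL} the functor $(-)^{\rm ab}_V$ sends such equivalences to quasi-isomorphisms of DG $R_V$-modules.

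Independence of the algebra resolution $R$ requires more care. Given a second semi-free resolution $f':R'\sonto A$, axiom MC4 for $\DGA_k$ lifts $\id_A$ to a quasi-isomorphism $\phi:R\stackrel{\sim}{\to}R'$ over $A$. The observation in Section~\ref{S2.2.2} then shows that $\phi_V:R_V\stackrel{\sim}{\to}R'_V$ is a quasi-isomorphism of commutative DG algebras, whence Proposition~\ref{Kelpr} furnishes a Quillen equivalence $\DGMod(R_V)\rightleftarrows\DGMod(R'_V)$ and, in particular, an equivalence $\D(\DGMod\,R_V)\simeq\D(\DGMod\,R'_V)$; identifying the two targets via this equivalence yields the auto-equivalence appearing in the statement. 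It remains to verify that the two derived functors obtained from $R$ and from $R'$ are intertwined by this identification, and that the resulting equivalence on any given $M$ is realized by a quasi-isomorphism (hence induces the identity on homology).

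The main obstacle I anticipate is precisely this last compatibility. Unpacking it via the explicit formula \eqref{E111}, one wants a natural quasi-isomorphism
\[
F\otimes_{R^{\rm e}}(\End\,V\otimes R_V)\,\stackrel{\sim}{\to}\,\phi_V^*\bigl(F'\otimes_{R'^{\rm e}}(\End\,V\otimes R'_V)\bigr)
\]
of DG $R_V$-modules, where $F\sonto M$ over $R$ and $F'\sonto M$ over $R'$ are semi-free resolutions, which can be chosen compatibly by lifting $\phi$ to a map $F\to\phi_*F'$ via the left-lifting axiom. Since $\phi^{\rm e}:R^{\rm e}\to R'^{\rm e}$ and $\phi_V:R_V\to R'_V$ are both quasi-isomorphisms and $F,F'$ are semi-free, this reduces to a routine comparison of tensor products over quasi-isomorphic DG algebras with cofibrant modules, which can be handled directly or via an Eilenberg--Moore-type spectral sequence ({\it cf.} Corollary~\ref{EMsps}). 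Once this naturality is settled, well-definedness of the functor up to the stated auto-equivalence follows.
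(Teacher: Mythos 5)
Your proof is correct and takes essentially the same route as the paper, which defines the functor by composing the derived functor of Theorem~\ref{S5T2} (for a fixed semi-free resolution $R$) with the equivalence $\D(\Bimod\,A)\simeq\D(\DGBimod\,R)$ coming from Proposition~\ref{Kelpr} applied to $f^{\rm e}$; the paper leaves the independence-of-$R$ step implicit, while you correctly identify it as the delicate point, reduce it (again via Proposition~\ref{Kelpr} applied to $\phi_V:R_V\to R'_V$) to a compatibility of adjoints, and sketch the comparison. (A minor notational slip: you write $\phi_V^*$ for restriction of scalars along $\phi_V$, whereas Proposition~\ref{Kelpr} denotes restriction by $(\phi_V)_*$; the intended meaning is clear.)
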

This result can be also verified directly, using polynomial homotopies  (see \cite{BKR}).

\vspace{1ex}

\begin{definition}
\la{HB}
For $ M \in \DGBimod(A) $, we call $\, \H_\bullet(M, V) := \H_\bullet[\L(M)^{\rm ab}_V]\,$
the {\it representation homology of the bimodule} $M$ with coefficients in $V$. If $ M \in \Bimod(A) $ is
an ordinary bimodule viewed  as a complex in $\D(\Bimod\,A)$ concentrated in degree $0$,  then
$\, \H_0(M, V) \cong M^{\rm ab}_V \ ,$.
\end{definition}

\vspace{1ex}

We now give some applications of Theorem~\ref{S5T2}.

\subsection{Derived tangent spaces}
\la{derts}
First, we compute the derived tangent spaces $ \pi_i(\DRep_V(A),\,\varrho) $ for
$ \DRep_V(A) $ viewed as an affine DG scheme (see Section~\ref{2.8} for notation and terminology).
Let $\,\varrho:\,A \to \End\,V\,$ be a fixed representation of $A$. Choose a cofibrant resolution $ R \sonto A $,
and let $\,\varrho_V:\,R_V \to k \,$ be the DG algebra homomorphism corresponding to the representation
$\,\varrho:\,R \to A \to \End\,V$.  Now, for any DG bimodule $M$, there is a canonical map of complexes induced by
the functor \eqref{E11}:
\begin{equation*}
\la{trns}
\DER(R,M) \cong \HOM_{\eR}(\Omega^{1} R,\,M) \xrightarrow{(\mbox{--})^{\rm ab}_V}
\HOM_{R_V}(\Omega^{1}(R_V),\,M_V) \cong \DER(R_V,M_V)\ .
\end{equation*}
We claim that for $ M = \End\,V $ viewed as a DG bimodule via $\, \varrho\,$, this map is an isomorphism.
Indeed,
\begin{eqnarray*}
\DER(R_V, \, k) &\cong& \HOM_{R_V}(\Omega^1(R_V),\,k) \\
                 &\cong& \HOM_{R_V}((\Omega^1 R)_V,\,k)\qquad\ \, [\,\mbox{see}\,\eqref{omvdb}\,] \\
                 &\cong& \HOM_{\eR} (\Omega^1 R ,\,\End\,V) \qquad [\,\mbox{see\,Lemma~\ref{L2}$(b)$}\,]\\
                 &\cong& \DER(R,\,\End\,V)\ .
\end{eqnarray*}
This implies
$$
 \pi_{\bullet}(\DRep_V(A),\,\varrho) := \H_\bullet[\DER(R_V, \, k)] \cong \H_\bullet[\DER(R,\,\End\,V)]\ .
$$
The following proposition is now a direct consequence of \cite{BP}, Lemma~4.2.1 and Lemma~4.3.2.
\bprop
\la{P2}
There are canonical isomorphisms
$$
\pi_{i}(\DRep_V(A),\,\varrho)\cong \left\{
\begin{array}{lll}
\Der(A,\,\End\,V)\ & \mbox{\rm if} &\ i = 0\\*[1ex]
\HH^{i+1}(A,\,\End\,V)\ & \mbox{\rm if} &\ i \ge 1
\end{array}
\right.
$$
where $ \HH^{\bullet}(A,\,\End\,V) $ denotes the Hochschild cohomology of the representation $ \varrho: A \to \End\,V $.
\eprop

As explained in Section~\ref{S2.3.5}, in the case when $V$ is a single vector space concentrated in degree $0$
$ \Rep_V(R) $ is isomorphic to the DG scheme $ \RAct(R,\,V) $ constructed in \cite{CK}.
This implies that $\,\pi_{\bullet}(\DRep_V(A),\,\varrho)\,$ should be isomorphic to $\,\pi_{\bullet}( \RAct(R,\,V),\,\varrho)\,$,
which is indeed the case, as one can easily see by comparing our Proposition~\ref{P2} to \cite{CK},
Proposition~3.5.4$(b)$.

\subsection{Periodicity and the Connes differential}
\la{S5.33}
One of the most fundamental properties of cyclic homology
is Connes' periodicity exact sequence ({\it cf.} \cite{L}, 2.2.13):
\begin{equation}
\la{ISB}
\ldots \to \rHH_n(A) \xrightarrow{I} \rHC_n(A) \xrightarrow{S}
\rHC_{n-2}(A) \xrightarrow{B} \rHH_{n-1}(A) \to \ldots
\end{equation}
This sequence involves two important operations on cyclic homology:
the periodicity operator $S$ and the Connes differential $B$.
It  turns out that $S$ and $B$ induce (via trace maps)
some natural operations on representation homology, and there is a
periodicity exact sequence for  $ \H_\bullet(A,V) $ similar to \eqref{ISB}.
We briefly describe this construction below referring the reader
to \cite{BKR}, Section~5.4, for details and proofs.

We begin by constructing the abelianized version of the trace maps
\eqref{char2}.  Recall, if $M$ is a bimodule over a DG algebra $A$, a {\it trace} on $M$
is a map of complexes $ \tau: M \to N $ vanishing on the commutator subspace
$ [A,M] \subseteq M$. Every trace on $M$ factors through the canonical
projection $ M \onto M_\n := M/[A,M] $, which is thus the universal trace.
Given a finite-dimensional vector space $V$, let $ \pi_V(M) $ denote the canonical map
corresponding to $ \id_{M_V} $ under the isomorphism of Lemma~\ref{L2}. The map of complexes
\begin{equation}
\la{S5E1}
\Tr_V(M)\,:\ M \xrightarrow{\pi_V(M)} \End\,V \otimes M^{\rm ab}_V \xrightarrow{\Tr_V \otimes \id} M^{\rm ab}_V\ ,
\end{equation}
is then obviously a trace, which is functorial in $M$. Thus \eqref{S5E1} defines a morphism of functors
\begin{equation}
\la{S5E2}
\Tr_V:\ (\,\mbox{--}\,)_\n \to (\,\mbox{--}\,)^{\rm ab}_V\ .
\end{equation}
As in the case of DG algebras, we have the following result.
\blemma
\la{dtrm}
\eqref{S5E2} induces a morphism of functors $\, \D(\DGBimod\,A) \to \D(k)\,$:
\begin{equation}
\la{S5E3}
\Tr_V:\ \L(\,\mbox{--}\,)_\n \to \L(\,\mbox{--}\,)^{\rm ab}_V\ ,
\end{equation}
where $ \L(\,\mbox{--}\,)^{\rm ab}_V $ is the derived representation functor introduced in Theorem~\ref{T2}.
\elemma
To describe \eqref{S5E3} on  $ M \in \DGBimod(A) $ explicitly we choose an
semi-free resolution $ p:\,R \sonto A $, regard $M$ as a bimodule over $R$ via $p$
and choose a semi-free resolution of $ F(R,M) \sonto M $ in $ \DGBimod(R)$. Then
\eqref{S5E3} is induced by the map \eqref{S5E1} with $ M $ replaced by $ F(R, M) $:
\begin{equation}
\la{S5E4}
\Tr_V(M):\ F(R, M)_\n \to F(R,M)^{\rm ab}_V\ .
\end{equation}
Note that, if $ A \in \Alg_k $ and $ M \in \Bimod(A) $, then
$\,\H_n[F(R, M)_\n] \cong \HH_n(A,M) \,$ for all $ n \ge 0 $,
so \eqref{S5E4} induces the trace maps on Hochschild homology:
\begin{equation}
\la{S5E5}
\Tr_V(M)_n:\ \HH_n(A,M) \to \H_n(M, V)\ ,\quad \forall\,n \ge 0\ ,
\end{equation}
where $ \H_n(M, V) := \H_n[\L(M)^{\rm ab}_V] $ is the representation homology of $M$ in sense of
Definition~\ref{HB}.

Now, given an algebra $ A \in \Alg_k $, fix an semi-free resolution $\,p: R \sonto A \,$ in $ \DGA_k^+ $ and
consider the commutative DG algebra $ R_V $. Let   $ \Omega_{\rm com}^1(R_V) $ be  the DG module
of  K\"ahler differentials of $ R_V $, and let $ \partial_V:\, R_V \to \Omega_{\rm com}^1(R_V) $ denote the
universal derivation (the de Rham differential) on $R_V$ .  By Theorem~\ref{comp}, $ R_V $ is isomorphic to a (graded) polynomial algebra.
Hence $ \Ker(\partial_V) \cong k $ for all $V$. On the the hand, the cokernel of $ \partial_V $ is a nontrivial
complex which is not, in general,  acyclic in positive degrees. We denote this complex by
$\,  \Omega_{\rm com}^1(R_V)/\partial R_V \,$, and for each integer $ n \ge 0 $, define
\begin{equation}
\la{Totv1e}
\tH_n(A,V) := \bar{\H}_n(A,V)\,\oplus\,
\H_{n-1}[\Omega_{\rm com}^1(R_V)/\partial R_V] \,\oplus\, \H_{n-3}[\Omega_{\rm com}^1(R_V)/\partial R_V]\,\oplus\,\ldots
\end{equation}
Note that the (reduced) representation homology $ \bar{\H}_n(A,V) $ appears as a direct summand of $ \tH_n(A,V) $.
It turns out that there are canonical maps
\begin{equation*}
\la{trex}
\tTr_V(A)_n:\, \rHC_n(A) \to \tH_n(A,V)\ ,\quad \forall\,n \ge 0\ ,
\end{equation*}
lifting  the traces \eqref{char2} to $ \bar{\H}_n(A,V) $.  Moreover, for all $ n \ge 0$,  one can construct natural
maps $ S_V:  \tH_n(A,\,V) \to  \tH_{n-2}(A,V)\,$ and $ B_V: \bar{\H}_n(A,\,V) \to  \H_{n}(\Omega^1 A, V) $
making commutative the following diagrams ({\it cf.} \cite{BKR}, Theorem~5.2):
\begin{equation}
\la{S5D3}
\begin{diagram}[small, tight]
\rHC_n(A)   &  \rTo^{S} & \rHC_{n-2}(A) \\
\dTo^{\tTr_V}&                 & \dTo_{\tTr_V} \\
\tH_n(A,\,V) &  \rTo^{S_V}     & \tH_{n-2}(A,V)
\end{diagram}
\qquad \qquad \quad
\begin{diagram}[small, tight]
\rHC_n(A)   &  \rTo^{B} & \overline{\HH}_{n+1}(A) \\
\dTo^{\Tr_V}&                 & \dTo_{\Tr_V} \\
\bar{\H}_n(A,\,V) &  \rTo^{B_V}     & \H_{n}(\Omega^1 A, V)
\end{diagram}
\end{equation}
(The rightmost trace in the second diagram is defined as in \eqref{S5E5} for $ M = \Omega^1 A $.)
Finally, there exists a long exact sequence
\begin{equation*}\la{chom}
\ldots \to \bar{\H}_n(A,V) \oplus \H_{n-1}(\Omega^1 A, V) \xrightarrow{I_V} \tH_n(A,V) \xrightarrow{S_V} \tH_{n-2}(A,V)
\xrightarrow{B_V} \bar{\H}_{n-1}(A,V) \oplus \H_{n-2}(\Omega^1 A, V) \to \ldots
\end{equation*}
which is related to the Connes periodicity sequence \eqref{ISB} by the trace mps in \eqref{S5D3}.
It is suggestive to call $ \tH_\bullet(A,V) $ the {\it cyclic representation homology} of $A$.

\subsection{Relation to Andr\`e-Quillen homology}
\la{RHAQ}
Recall that the Andr\`e-Quillen homology of a commutative algebra $C$ with coefficients in a module $M$ is denoted
$\, D_\bullet(k \bs C, M) \,$ (see Section~\ref{AQh}, Example~3).

Now, fix $ A \in \DGA_k $, and let $\,\pi: R \sonto A \,$ be a semi-free resolution of $A$. Assume that, for some $ V $, the canonical map induced by $ \pi\,$:
\begin{equation}
\la{rab}
\Omega_{\rm com}^1(R_V) \stackrel{\sim}{\to} A_V \otimes_{R_V}
\Omega_{\rm com}^1(R_V)\ \mbox{\rm is a quasi-isomorphism.}
\end{equation}
Then, there is a homological spectral sequence
\begin{equation}
\la{sppp}
E^2_{pq} = D_p(k\bs A_V,\,\H_q(A, V))\  \Rightarrow \  
\H_n(\Omega^1 A, V)
\end{equation}
converging the repreresentation homology of $ \Omega^1 A $.
Indeed, applying $ \L \Omega_{\rm com}^1( \mbox{--}/A_V) \circ  \L(\mbox{---})_V  $ to the DG algebra $R$,
we have isomorphisms in the derived category of DG $R_V$-modules:
\begin{equation*}
\la{lciso}
\L \Omega_{\rm com}^1(R_V/A_V)\, \cong\, 
\Omega_{\rm com}^1(R_V/A_V) \, := \, A_V \otimes_{R_V}
\Omega_{\rm com}^1(R_V) \,\cong\, \Omega_{\rm com}^1(R_V) \,\cong\,
(\Omega^1 R)_V^{\rm ab}\,\cong\,\L(\Omega^1 A)^{\rm ab}_V\ .
\end{equation*}
Here, the first isomorphism follows from the fact $ R_V $
is semi-free in $ \cDGA_k $ whenever $ R $ is semi-free in $ \DGA_k $ ({\it cf.} Theorem~\ref{comp}), the second isomorphism is a consequence of \eqref{rab} and the third isomorphism is given in \eqref{omvdb} and the last again follows from the fact that
$R$ is semi-free so that $ \pi: R \sonto A $ induces a semi-free resolution $ \Omega^1 R \sonto \Omega^1 A $ in the category of
$R$-bimodules. Hence, we have the Grothendieck spectral sequence
$$
\L_p \Omega_{\rm com}^1( \mbox{--}/A_V) \circ  \L_q(A)_V \  \Rightarrow  \  \L_{p+q}(\Omega^1 A)^{\rm ab}_V
$$
which is precisely \eqref{sppp}. We conclude with the following

\vspace{1ex}

\noindent
{\bf Example.} Let $A  $ be a formally smooth algebra in $ \Alg_k $ (see Section~\ref{SAl} below). Assume that $A$ has a
semi-free resolution $ R \sonto A $ that is finitely generated in each degree. Then, by (the proof of) Theorem~\ref{abt1}, we have a quasi-isomorphism $ R_V \sonto A_V $ which implies \eqref{rab}.
Hence, the spectral sequence \eqref{sppp} exists in this case.
Now, we actually have $\,\H_q(A,V) = 0 \,$ for all $ q > 0 $, while $\, \H_0(A,V) = A_V \,$.
On the other hand, if $A$ is formally smooth in $ \Alg_k$, then $A_V$ is formally smooth in the category of commutative $k$-algebras.
This implies that $\, D_p(k\bs A,\,\mbox{---}\,) = 0 \,$ for all $ p > 0 $ (see \cite{L}, Theorem~3.5.6). Thus, 
the spectral sequence  \eqref{sppp} collapses, giving isomorphisms
$\, \H_0(\Omega^1 A, V) \cong \Omega_{\rm com}^1(A_V) \,$ and $ \H_n(\Omega^1 A, V) = 0 $ for all $ n > 0 $.

\section{Examples}
\la{S6}
In this section, we will give a number of examples and explicit computations.
We will focus on two classes of algebras: noncommutative complete intersections
and Koszul algebras for which there are known `small' canonical resolutions.
We begin with a particularly simple class of algebras that are models
for smooth spaces in noncommutative geometry (see \cite{KR}).

\subsection{Smooth algebras}
\la{SAl}
Recall that a $k$-algebra $A $ is called {\it formally smooth} (or {\it quasi-free}) if either of the following equivalent conditions holds (see \cite{CQ, KR}):
\begin{enumerate}
\item
$A$ has cohomological dimension $ \leq 1 $ with respect to Hochschild cohomology.
\item
The universal bimodule $ \Omega_k^1 A $ of derivations is a projective bimodule.
\item
$A$ satisfies the lifting property with respect to nilpotent extensions in $ \Alg_k \,$:
{\it i.e.} for every algebra homomoprhism $ f: A \to B/I $,
where $\, I \vartriangleleft B \,$ is a nilpotent ideal, there is an algebra
homomorphism $ \tilde{f}: A \to B $ inducing $f $.
\end{enumerate}
A formally smooth algebra is called {\it smooth} if it is finitely generated.
It is easy to see that a formally smooth algebra is necessarily hereditary
(\cite{CQ}, Proposition~6.1),  but a hereditary algebra may not be formally smooth
(e.g., the Weyl algebra $ A_1(k)$).
Here are some well-known examples of smooth algebras:
\begin{itemize}
\item
Finite-dimensional separable algebras.
\item
Finitely generated free algebras.
\item
Path algebras of (finite) quivers.
\item
The coordinate rings of  smooth affine curves.
\item
If $ G $ is a f.g. discrete group, its group algebra $ k G $  is smooth iff
$G$ is virtually free ({\it i.e.}, $G$ contains a free
subgroup of finite index), see \cite{LeB}.
\end{itemize}
The class of formally smooth algebras is
closed under some natural constructions: for example,
coproducts and (universal) localizations of formally smooth
algebras are formally smooth.

The key property of (formally) smooth algebras is given by
the following  well-known theorem (see, e.g,, \cite{G}, Proposition~19.1.4).
\begin{theorem}
 \la{abt0}
If $ A $ is a (formally) smooth algebra, then $ \Rep_V(A)$ is
a (formally) smooth scheme for every finite-dimensional
vector space $V$.
\end{theorem}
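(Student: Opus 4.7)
The plan is to prove the theorem using the lifting characterization of formal smoothness (condition (3) in the definition of Section~\ref{SAl}), combined with the adjunction of Proposition~\ref{S2P1}$(b)$. Concretely, suppose $A$ is formally smooth, let $B$ be a commutative $k$-algebra, and let $I \vartriangleleft B$ be a nilpotent ideal. I want to show that every algebra homomorphism $\varphi : A_V \to B/I$ lifts to a homomorphism $\tilde{\varphi} : A_V \to B$ completing the canonical square.

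By Proposition~\ref{S2P1}$(b)$, such a $\varphi$ corresponds to a $k$-algebra map $\rho : A \to \End V \otimes B/I$. The key observation is that $\End V \otimes B/I$ is naturally identified with the quotient $(\End V \otimes B)/(\End V \otimes I)$, and the kernel $J := \End V \otimes I$ is a two-sided nilpotent ideal of $\End V \otimes B$. Indeed, because multiplication is defined componentwise on elementary tensors, one has $J^n \subseteq \End V \otimes I^n$, so $J^n = 0$ as soon as $I^n = 0$. Hence the formal smoothness of $A$ applied to the nilpotent extension $\End V \otimes B \onto \End V \otimes B/I$ produces a lift $\tilde{\rho} : A \to \End V \otimes B$ of $\rho$, and transferring back through the adjunction yields the required $\tilde{\varphi} : A_V \to B$. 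This shows $A_V$ is formally smooth as a commutative $k$-algebra, i.e., that $\Rep_V(A)$ is a formally smooth scheme.

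For the second half, assume in addition that $A$ is smooth, i.e., finitely generated. Using the explicit presentation of $A_V$ by matrix entries described just after Theorem~\ref{S2T1}, if $\{a_1,\ldots,a_m\}$ generate $A$ as a $k$-algebra then the images of the $\,m d^2\,$ elements $\,(a_s)_{ij}\,$ ($1 \le i,j \le d$, $1 \le s \le m$) in $A_V$ generate $A_V$ as a commutative $k$-algebra; this follows from the fact that $\End V \otimes A_V$ is generated as a $k$-algebra by the elementary matrices $e_{ij}$ together with $\pi_V(A)$, and the $e_{ij}$ come from the fixed representation of $\End V$.

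The only real content is the first paragraph; the second is a bookkeeping check. The main obstacle is a conceptual one rather than a technical one: making sure that the nilpotent ideal on the commutative side really does correspond, via the matrix-reduction adjunction, to a nilpotent (two-sided) ideal on the associative side so that the noncommutative lifting property is directly applicable. The computation $J^n \subseteq \End V \otimes I^n$ dispatches this cleanly, so no spectral sequence or cohomological computation is needed; formal smoothness of $\Rep_V(A)$ is a direct consequence of the adjoint relationship between the representation functor and matrix-valued tensoring.
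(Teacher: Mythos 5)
Your argument is correct. The paper does not actually prove Theorem~\ref{abt0}: it is quoted as a well-known fact with a citation to Ginzburg's lecture notes (Proposition~19.1.4 of \cite{G}), so there is no in-text proof to compare against. Your route is the standard one and is complete: the adjunction of Proposition~\ref{S2P1}$(b)$, specialized to ordinary algebras, reads $\Hom_{\cAlg_k}(A_V,\,B) \cong \Hom_{\Alg_k}(A,\,\End V \otimes B)$, and it converts the infinitesimal lifting problem for $A_V$ against a commutative nilpotent extension $B \onto B/I$ into the associative lifting problem for $A$ against $\End V \otimes B \onto \End V \otimes B/I$. The check that $J := \End V \otimes I$ is nilpotent (via $J^n \subseteq \End V \otimes I^n$) is exactly what is needed, and naturality of the adjunction in the second variable guarantees that a lift $\tilde\rho$ of $\rho$ transports to a lift $\tilde\varphi$ of $\varphi$, so condition~(3) of Section~\ref{SAl} applies directly. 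The finite-generation claim in your second paragraph is also correct and is implicit in the discussion around \eqref{S2E6}--\eqref{S2E8}: since $\psi$ identifies $\End V \amalg A$ with $\End V \otimes \rtv{A}$, the latter is generated by $e_{ij}\otimes 1$ and the images of generators of $A$, from which one recovers all $(a_s)_{ij}$ as generators of $\rtv{A}$ and hence of its commutativization $A_V$. The only step worth spelling out is the passage from ``$A_V$ formally smooth and finitely generated over the field $k$'' to ``$\Rep_V(A)$ smooth,'' which uses that finite type over a field implies finite presentation, consistent with the paper's standing hypothesis that $k$ is a field.
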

In other words, Theorem~\ref{abt0} says that the representation
functor $ \Rep_V $ preserves (formal) smoothness.
This can be explained by the following
\begin{theorem}
\la{abt1}
Let $A$ be a formally smooth algebra. Assume that $A$ has a
semi-free resolution in $ \DGA_k^+ $ that is finitely generated in each degree. Then, for any finite-dimensional vector space $V$,
\begin{equation}
\la{vanrep}
\H_n(A,V) = 0 \ , \quad \forall\, n > 0 \ .
\end{equation}
\end{theorem}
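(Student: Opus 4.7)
The strategy is to show that for \emph{any} semi-free resolution $\pi\colon R \sonto A$ in $\DGA_k^+$ that is finitely generated in each degree, the induced morphism $\pi_V\colon R_V \to A_V$ is itself a quasi-isomorphism of commutative DG algebras. Since $R_V$ represents $\DRep_V(A)$ in $\Ho(\cDGA_k^+)$, this gives $\H_n(A,V) = \H_n(R_V) = 0$ for $n > 0$. Dually, one must show that the associated morphism of affine DG schemes $f\colon \Rep_V(A) \to \Rep_V(R):=\Spec(R_V)$ is a quasi-isomorphism, and I plan to deduce this from Proposition~\ref{Kapp} once smoothness and the tangent-space conditions have been verified.

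First, I would verify that both the source and the target of $f$ are smooth DG schemes in the sense of Section~\ref{2.8}. The source $\Rep_V(A)$ is an ordinary smooth affine scheme by Theorem~\ref{abt0}, hence a smooth DG scheme concentrated in degree $0$. For the target, Theorem~\ref{comp} presents $R_V$ as a free graded commutative algebra $\bL_{(R_V)_0}(E_\bullet)$: its degree-zero part $(R_V)_0$ is the polynomial ring on the matrix entries of the degree-$0$ generators of $R$ (so $\Spec (R_V)_0$ is an affine space, in particular smooth), and the finite-generation hypothesis on $R$ ensures that each $E_i$ is a free $(R_V)_0$-module of finite rank. Thus $\Rep_V(R)$ is a smooth DG scheme.

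Next, I would check the two conditions of Proposition~\ref{Kapp}. On $\pi_0$: Theorem~\ref{S2T4} gives $\H_0(R_V) \cong A_V$, whence $\pi_0(\Rep_V(R)) = \Rep_V(A) = \pi_0(\Rep_V(A))$ and $\pi_0(f)$ is the identity. On derived tangent spaces at a closed $k$-point $\varrho \in \Rep_V(A)$: the computation of Section~\ref{derts} identifies the tangent complex $(T_\varrho \Rep_V(R))_\bullet \cong \DER(R,\,\End\,V)$, whose homology is given by Proposition~\ref{P2} as $\Der(A,\,\End\,V)$ in degree $0$ and $\HH^{i+1}(A,\,\End\,V)$ in degree $i\ge 1$. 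On the source, $(T_\varrho \Rep_V(A))_\bullet$ is concentrated in degree $0$ with the same value $\Der(A,\,\End\,V)$, since $\Rep_V(A)$ is an ordinary smooth scheme. Because $A$ is formally smooth, its Hochschild cohomological dimension is at most $1$, so $\HH^{i+1}(A,\,\End\,V)=0$ for every $i\ge 1$. The differential $d_\varrho f$ is therefore an isomorphism on all derived tangent spaces, and Proposition~\ref{Kapp} yields that $f$ is a quasi-isomorphism, completing the proof.

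The principal technical obstacle is not the homological vanishing (which is immediate from the characterization of formal smoothness via $\HH^{\ge 2}$) but rather the geometric step of recognising $\Rep_V(R)$ as a \emph{smooth} DG scheme: one needs the explicit matrix presentation of Theorem~\ref{comp} together with the finite-generation assumption on $R$ to guarantee that the components $E_i$ are finite-rank locally free. Without finite generation in each degree the tangent-space comparison still makes sense, but Proposition~\ref{Kapp} cannot be invoked directly, and a spectral-sequence argument in the spirit of Lemma~\ref{CKlem} would be required instead.
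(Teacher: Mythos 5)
Your proof is correct and follows essentially the same route as the paper's: it verifies smoothness of the DG scheme $\Spec(R_V)$, identifies $\pi_0$ via Theorems~\ref{S2T4} and \ref{abt0}, computes the derived tangent spaces via Proposition~\ref{P2}, uses formal smoothness to kill $\HH^{\ge 2}(A,\End\,V)$, and concludes with Proposition~\ref{Kapp}. Your only expansion on the paper's argument is the explicit justification---via Theorem~\ref{comp} together with the finite-generation hypothesis on $R$---that $\Spec(R_V)$ is indeed a smooth DG scheme, a point the paper merely asserts.
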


\vspace{1ex}

\begin{remark} It is natural to ask whether the vanishing condition  \eqref{vanrep} characterizes formally smooth algebras: that is,
does \eqref{vanrep} imply that $ A $ is formally smooth? The answer to this question is `no.'
A counterexample will be given in Section~\ref{dqcom}.
\end{remark}

\begin{proof}
The proof of Theorem~\ref{abt1} is based on Proposition~\ref{Kapp}
of  Section~\ref{2.8}.  We will use the notation and terminology
introduced in that section.  Let $p\,:\,R \sonto A$ be a semifree resolution
of $A$ in $ \DGA_k^+ $ that is finitely generated in each degree.
Then, $R_V$ defines a smooth affine DG scheme which, abusing notation,
we denote  $\, \DRep_V(A) \,$. By Theorem~\ref{S2T4},
$\, \pi_0( \DRep_V(A))\,\cong \,\mathrm{Rep}_V(A)$.
On the other hand, $ \pi_0(\Spec(A_V))= \Spec(A_V)$
 is indeed the same as $\Rep_V(A) $, and the latter scheme
is smooth by Theorem~\ref{abt0}.
Furthermore, by Proposition~\ref{P2},
for any $ \rho \,\in\, \Rep_V(A) $,
$$
\pi_i(\DRep_V(A), \rho)\, \cong\, \HH^{i+1}(A, \End \,V)\,,\,\,\,\, \forall \,i \geq 1\,\text{.}
$$
Since $A$ is formally smooth, it follows that
$\pi_i(\DRep_V(A), \rho) = 0 $  for all $i \geq 1$. Thus, the differential
$dp_{\rho}$ is a quasi-isomorphism of tangent spaces
$ \T_{\rho}\DRep_V(A)_\bullet $ to  $ \T_{\rho}\Rep_V(A) $ for each representation
$\rho$ of $A$ in $\End\,V$.  Now,  from Proposition~\ref{Kapp}
it follows that $R_V$ is quasi-isomorphic to $A_V$ (via $p_V$).
Since $\mathrm{H}_n(A,V) \,\cong\,\mathrm{H}_n(R_V)$ for all $n$,
the desired result follows.
\end{proof}

We call an algebra $A$ {\it representation cofibrant} if $\mathrm{H}_n(A,V)$
vanishes for all {\it positive} $n$ and for each finite-dimensional
$k$-vector space $V$. The following result is analogous to
the fact that a resolution by acyclic sheaves suffices to compute sheaf cohomology.

\begin{proposition}
\la{abt2}
 Let $B\,\in\,\DGA^+_k$. Suppose $S \rar B$ is a resolution of $B$ by a DG algebra $S$ that
is an extension of a {\it representation cofibrant} algebra
$A$ by an honest cofibration, then, for any finite-dimensional vector space $V$,
 $$ \mathrm{H}_n(S_V)\,\cong\, \mathrm{H}_n(B, V)\,\text{.}$$
\end{proposition}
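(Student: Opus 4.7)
The plan is to construct a cofibrant resolution $R' \sonto S$ in $\DGA^+_k$ for which the natural map $R'_V \to S_V$ is itself a quasi-isomorphism. Combined with the hypothesis $S \sonto B$, this will give $\H_n(B,V) \cong \H_n(R'_V) \cong \H_n(S_V)$.

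First, choose a cofibrant resolution $p: R \sonto A$ in $\DGA^+_k$. The hypothesis that $A$ is representation cofibrant means precisely that $\H_0(R_V) = A_V$ and $\H_n(R_V) = 0$ for $n > 0$, so the induced morphism $p_V: R_V \to A_V$ is already a quasi-isomorphism in $\cDGA^+_k$. Since cofibrations in $\DGA^+_k$ are retracts of semi-free extensions, one may reduce to the case where $S = A \amalg T_k W$ as a graded algebra with differential $d_S$ extending $d_A$. Set $R' := R \amalg T_k W$ and define a differential $d_{R'}$ extending $d_R$ on the new generators by inductive lifting: for $w \in W$ of degree $n$, the cycle $d_S(w) \in A \amalg T_k W^{<n}$ lifts to a cycle $d_{R'}(w) \in R \amalg T_k W^{<n}$ since $p$ (extended by the identity on the already-defined $W^{<n}$) is a surjective quasi-isomorphism. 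Then $R \into R'$ is a semi-free extension, so $R'$ is cofibrant, and a direct inspection of the underlying graded algebras and differentials identifies $R' \amalg_R A$ with $S$; in other words, the square
\begin{equation*}
\begin{diagram}[small,tight]
R          & \rInto & R' \\
\dOnto^{p} &        & \dTo \\
A          & \rInto & S
\end{diagram}
\end{equation*}
is a pushout in $\DGA^+_k$.

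By left properness of $\DGA^+_k$, the pushout of the weak equivalence $p$ along the cofibration $R \into R'$ is a weak equivalence, so $R' \sonto S$. Composing with $S \sonto B$ produces a cofibrant resolution $R' \sonto B$, whence $\H_\bullet(B,V) = \H_\bullet(R'_V)$. Now apply the representation functor $(-)_V$ to the pushout square above. As a left adjoint (Proposition~\ref{S2P1}(b)) it preserves the pushout, and as a left Quillen functor (Theorem~\ref{S2T2}(a)) it sends the cofibration $R \into R'$ to a cofibration $R_V \into R'_V$ in $\cDGA^+_k$. Invoking left properness of $\cDGA^+_k$ (standard over a field of characteristic zero), the pushout of the weak equivalence $p_V$ along this cofibration is a weak equivalence, yielding the desired quasi-isomorphism $R'_V \sonto S_V$.

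The main technical point will be the construction of $R'$ with its lifted differential so as to realize the square above as a pushout; once this is in place, the proof reduces to two parallel applications of left properness, first in $\DGA^+_k$ to identify $R'$ as a cofibrant resolution of $B$, and then in $\cDGA^+_k$ (where the Quillen pair transports the pushout) to match $\H_\bullet(R'_V)$ with $\H_\bullet(S_V)$.
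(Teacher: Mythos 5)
Your proof is correct and takes a genuinely different route from the paper, and arguably a more transparent one. The paper takes an arbitrary cofibrant resolution $T$ of $S$ in $\DGA_R$, forms $U := A \amalg_R T$, and then has to argue (via properness, the acyclic fibration $U \sonto S$, a section $i: S \to U$, and an M-homotopy $ip \sim \id$ from the appendix of \cite{BKR}) that $U_V$ and $S_V$ have the same homology --- the homotopy argument is needed precisely because $U$ is only weakly equivalent to $S$, not equal to it. You instead build the cofibrant model $R'$ by hand, choosing differentials so that the pushout $R' \amalg_R A$ is \emph{literally} $S$; this collapses the homotopy step entirely, and both halves of the argument become symmetric applications of left properness, once in $\DGA^+_k$ to see $R' \to S$ is a weak equivalence and once in $\cDGA^+_k$ to see $R'_V \to S_V$ is one. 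The inductive lifting of cycles that you use is sound: at each stage the surjective quasi-isomorphism $R \amalg T_k W^{<n} \to A \amalg T_k W^{<n}$ (itself obtained by properness at stage $n-1$) is surjective on cycles by the usual five-lemma argument, so $d_{R'}$ exists with $d_{R'}^2 = 0$ and the induced differential on the pushout matches $d_S$. Two small things would strengthen the write-up: (i) the reduction to the semi-free case deserves a sentence, since a retract $S$ of a semi-free extension $S'$ is \emph{not} in general quasi-isomorphic to $S'$; what one actually needs is the standard fact that the comparison map $\H_\bullet(S,V) \to \H_\bullet(S_V)$ is a retract (in the category of arrows) of $\H_\bullet(S',V) \to \H_\bullet(S'_V)$, and a retract of an isomorphism is an isomorphism; (ii) the arrows $R' \sonto S$ and $R'_V \sonto S_V$ are weak equivalences obtained by properness, not fibrations, so the two-headed-arrow notation is slightly misleading, though it does not affect the argument since a cofibrant object together with any weak equivalence to the target suffices to compute a left derived functor.
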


\begin{proof}
 Let $A \rar S$ be the given cofibration. Consider a cofibrant resolution $R \rar A$ of $A$ and note that the composite map
$R \rar A \rar S$ makes $S$ an object in $\DGA_R$. Let $R \backslash T$ be a cofibrant resolution of $R \backslash S$ in $\DGA_R$.
Consider the pushout $U\,:=\,A \amalg_R T$ in $\DGA^+_k$. We claim that $U$ is quasi-isomorphic to $S$
 (via the natural map $U \rar S$ arising out f the universal property of $U$)
, and hence, to $B$. Indeed, since the model category $\DGA^+_k$
is proper ({\it cf.}~\cite[Proposition B.3]{BKR}), the morphism $T \rar U$ (coming from the pushout diagram) is a quasi-isomorphism.
Since the resolution $T \rar S$ is equal to
the composition $T \rar U \rar S$, $U \rar S$ is indeed a quasi-isomorphism. Further, since $A \rar U$ is the pushout of a cofibration, it is a cofibration.
Thus, $p\,:\,U \rar S$ is a quasi-isomorphism between cofibrant objects in $\DGA^+_A$. Since $T \rar S$ is a fibration, so is $U \rar S$.
Thus, one obtains a homotopy inverse $i\,:\,S \rar U$ of $p$ in $\DGA^+_A$. By~\cite[Proposition B.2]{BKR},  $ip$ is homotopic to the identity via an
M-homotopy (while $pi=\mathrm{Id}_S$). Thus, $i_V$ and $p_V$ are quasi-isomorphisms.
It therefore suffices to check that $$\mathrm{H}_n(U_V)\,\cong\, \mathrm{H}_n(B,V) \,\text{.}$$ By definition,
$\mathrm{H}_n(B,V) \cong \mathrm{H}_n(T_V)$. Since the functor $(\mbox{--})_V$ preserves cofibrations and pushout diagrams,
$$ U_V\,\cong\, A_V \amalg_{R_V} T_V $$
in $\cDGA^+_k$. Since $R_V \rar A_V$ is a quasi-isomorphism (as $A$ is representation cofibrant), and since the model category
$\cDGA^+_k$ is proper,  $T_V \rar U_V$ is a quasi-isomorphism in $\cDGA^+_k$. This proves the desired result.
\end{proof}

\subsection{Noncommutative complete intersections}
Let $ F \in \Alg_k $ be a smooth algebra (e.g., the tensor algebra of a finite-dimensional
vector space), and let $ J $ be a finitely generated 2-sided ideal of $ F $.

\vspace{1ex}

\begin{definition}
The algebra $ A = F/J $ (or the pair $ J \subseteq F $) is called a {\it noncommutative complete intersection} (for short, NCCI) if $ J/J^2 $ is a projective bimodule over $A$.
\end{definition}

\vspace{1ex}

This class of algebras has been studied, under different names, by different authors
(see, e.g., \cite{AH, A, GSh, Go, EG}).
In the present paper, we will use the notation and terminology of \cite{EG}.
As in \cite{EG}, we will work with graded connected algebras equipped with a non-negative polynomial grading. Such an algebra $A$ can be presented as the quotient of a free algebra generated by a finite set of homogeneous variables by the two-sided ideal generated by a finite collection of homogeneous relations. In other words, we may write
\begin{equation}
\la{nci}
A = T_k\,V/\langle j(L) \rangle
\end{equation}
where $V$ is a positively graded $k$-vector space of finite total dimension and $L$ is a finite-dimensional positively graded $k$-vector space equipped with
an homomorphism $\,j :\,L \rar T_k\,V\,$ of graded $k$-vector spaces (which can be chosen to be an embedding). Following \cite{EG}, we refer to the triple $(V,L,j)$ as  {\it presentation data} for $A$. It is easy to show that an algebra $A$ of the form \eqref{nci} is NCCI
if and only if it has cohomological dimension $ \leq 2 $ with respect to Hochschild
cohomology  (see \cite[Theorem~3.1.1]{EG}). The class of (graded) noncommutative complete intersections is thus a natural extension of the class of smooth algebras.

Associated to the data $\,(V,L,j)\,$ there is a non-negatively graded DG algebra defined as follows.
Place $V$ in homological  degree $0$ and place $L$ in homological degree $1$ to obtain the $k$-vector space $V \oplus L[1]$ (which is graded homologically as well as polynomially). Then define the bigraded algebra $ T_k(V \oplus L[1])$ and put on it a (unique) differential $d$ such that
$$ d(l)\,=\,j(l)\,\in T_k\,V $$ for all $l$ in $L[1]$.  The resulting DG algebra is denoted $\sh(A,(V,L,j))$ and called
the {\it Shafarevich complex}\footnote{This complex was originally introduced in \cite{GSh} in connection with the famous Golod-Shafarevich Theorem. We recommend
the survey paper \cite{Pi}, where this connection as well as many other
interesting applications of the Shafarevich complex are discussed.}
corresponding to  $(V,L,j)$. Note that $\mathrm{H}_0(\sh(A,(V,L,j)))\,\cong\,A$.
\begin{theorem}[see \cite{A, Go, EG}]
A (graded connected) algebra $A$ is NCCI iff it has presentation data
$(V,L,j)$ such that the associated Shafarevich complex $\sh(A, (V,\,L,\,j))$ is acyclic in all positive degrees.
\end{theorem}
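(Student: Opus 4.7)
The Shafarevich complex $S=\sh(A,(V,L,j))=T_k(V\oplus L[1])$ is a semi-free DG algebra in $\DGA_k^+$, with $V$ placed in homological degree $0$, $L$ in homological degree $1$, and differential $d|_L=j$. By construction $\H_0(S)=F/\langle j(L)\rangle=A$, so "$S$ is acyclic in positive degrees" is the same as "$S\sonto A$ is a semi-free resolution of $A$ in $\DGA_k^+$ whose generators sit in homological degrees $0$ and $1$ only". My plan is to equate this to the existence of a length-$2$ projective $A$-bimodule resolution of $A$, which by \cite[Theorem~3.1.1]{EG} is the NCCI condition; the bridge is provided by the noncommutative K\"ahler differentials.

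For $(\Leftarrow)$: when $S$ is such a resolution, $\Omega^1 S$ is a free $S$-bimodule on generators $dV$ (degree $0$) and $dL$ (degree $1$). Tensoring on both sides with $A$ over $S$ produces the two-term complex of free $A$-bimodules
\[
0\to A\otimes L\otimes A\xrightarrow{\delta}A\otimes V\otimes A\to 0,
\]
where $\delta$ is induced by the noncommutative Fox derivatives of $j$. Combining this with the standard short exact sequence $0\to\Omega^1 A\to A\otimes A\to A\to 0$ and the conormal sequence $0\to J/J^2\to A\otimes V\otimes A\to\Omega^1 A\to 0$ (valid because $F=T_k V$ is smooth), I would argue that $\delta$ is injective and that its image is precisely $J/J^2$. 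Hence $J/J^2\cong A\otimes L\otimes A$ is a free, hence projective, $A$-bimodule, and $A$ is NCCI.

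For $(\Rightarrow)$: assume $J/J^2$ is projective. Since $A$ is graded connected, finitely generated projective $A$-bimodules are free, so one can pick homogeneous elements $l_1,\dots,l_m\in J$ whose classes form a free $A$-bimodule basis of $J/J^2$. Taking $L=\Span_k\{l_i\}$ and $j:L\hookrightarrow F$, the first differential of the associated Shafarevich complex realizes an isomorphism $A\otimes L\otimes A\cong J/J^2$ by construction. The remaining task---and the main obstacle---is to promote this $\H_0,\H_1$-level statement to acyclicity of the entire Shafarevich complex in all positive degrees. My plan is to filter $S$ by polynomial weight, analyze the associated graded as a noncommutative Koszul-type complex, and use the freeness of $J/J^2$ (together with the graded connected hypothesis) to rule out higher syzygies inductively---an argument parallel to the classical fact that the commutative Koszul complex on a regular sequence is acyclic. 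Equivalently, in the language of derived functors, one is showing that the vanishing of $\Tor^{A^e}_{\ge 2}(A,A)$ forces the DG resolution $S\to A$ to have no higher-degree pieces, and it is here that the detailed work lives.
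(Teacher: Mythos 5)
The paper does not itself prove this theorem; it is cited from Anick [A], Golod [Go], and Etingof--Ginzburg [EG], so there is no in-paper argument to compare against. I will therefore assess your proposal on its own terms.

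Your $(\Leftarrow)$ direction is essentially sound. If $S=\sh(A,(V,L,j))$ is acyclic in positive degrees, then $S\sonto A$ is indeed a semi-free resolution with generators in degrees $0$ and $1$; $\Omega^1 S$ is free over $S^{\rm e}$ on $dV\oplus dL$; and $A\otimes_S\Omega^1 S\otimes_S A$ is the noncommutative cotangent complex $\L\Omega^1_k(A)$, whose homology is $\Omega^1 A$ concentrated in degree $0$ (by \eqref{qhdga} with $B=A$, or directly via the Quillen equivalence $S^{\rm e}\to A^{\rm e}$ of Proposition~\ref{Kelpr}). This forces $\delta$ injective with cokernel $\Omega^1 A$, and identifying $\mathrm{im}\,\delta$ with $J/J^2$ via the Cuntz--Quillen conormal sequence (injective on the left since $F=T_kV$ is formally smooth) gives $J/J^2\cong A\otimes L\otimes A$, free. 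One small slip in the closing remark: $J/J^2$ projective is equivalent to $\mathrm{pd}_{A^{\rm e}}A\le 2$, i.e., vanishing of $\Tor^{A^{\rm e}}_{\ge 3}(A,-)$, not $\Tor^{A^{\rm e}}_{\ge 2}(A,A)$.

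The $(\Rightarrow)$ direction has a genuine gap, which you openly acknowledge but do not close. Knowing that $J/J^2$ is free on the classes $\bar{j(l_1)},\dots,\bar{j(l_m)}$ tells you, after unwinding the conormal sequence, that the degree-$1$ part of the Shafarevich complex has the \emph{right size}; but it does not by itself show that $\H_1(S)=0$, let alone $\H_p(S)=0$ for all $p\ge 1$. What is needed is precisely Golod's rigidity for the Shafarevich complex (the noncommutative analogue of Theorem~\ref{Rigid}) \emph{together with} a proof that the chosen $\{j(l_i)\}$ form a ``strongly free'' set in Anick's sense. Your proposed route---filter by polynomial weight, study the associated graded as a Koszul-type complex, rule out higher syzygies by induction---does not obviously work: the Shafarevich complex is already internally graded by polynomial degree, so that filtration is trivial, and the commutative argument (wedging with a regular element and using the contracting homotopy on the exterior algebra) has no direct noncommutative counterpart. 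The actual proofs of this implication in [A] and [Go] run through a different circle of ideas (Hilbert-series/Golod--Shafarevich inequalities for Anick; a recursive Poincar\'e-series argument for Golod). Without either executing that or supplying an independent rigidity lemma, the implication remains unproven.
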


Using the Shafarevich complex, we can study the representation homology of NCCI algebras.
To avoid confusion with the data $\,(V,L,j)\,$,  we will consider representations of $A$ on a vector space
$ k^n $, $\, n \ge 1 $. The corresponding representation functors will then be denoted by
$ \Rep_n(A) $ and $\, \DRep_n(A)$ instead of  $ \Rep_{k^n}(A) $ and $\, \DRep_{k^n}(A) $.

Recall that a given DG algebra  $R \in \DGA^+_k$ has a universal DG representation
$ \pi_n\,:\,R \to  \M_n(R_n)$ defined for each $ n \ge 1 $. For a matrix $M$ with entries in a DG-algebra $S$, we denote the entry in row $i$ and column $j$ by $M_{ij}$. For notational brevity, we shall denote the vector space
$X \otimes \M_n(k)$ by $X_n$ for any $k$-vector space $X$. Let $j_n\,:\,L_n \rar (T_k\,V)_n$ denote the map
$$l_{pq}\,:=\, l \otimes \mathrm{e}_{pq}  \mapsto (\pi_n(j(l)))_{pq}\,\text{.}$$
Further, recall that for a finitely generated (polynomially graded) commutative algebra $k$-algebra $B$, given finite dimensional (polynomially graded) vector spaces $W\,,S$ and a homomorphism $f\,:\,S \rar \bSym(W)$ such that $B\,=\,  \bSym(W)/(f(S))$, one can construct the Koszul complex $\mathrm{K}(B,(W,S,f))\,:=\,\bSym(W \oplus S[1])$ equipped with the homological differential mapping each $s \in S$ to $f(s)$.
It turns out that the representation functor transforms Shafarevich complexes to Koszul complexes.
Indeed, with our notation, the following lemma is an immediate consequence  of Theorem~\ref{comp}.

\begin{lemma} \la{ShNCC}
Let $(V,L,j)$ be presentation data for $A$. Then,
$$(\sh(A,(V,L,j)))_n\,\cong\, \mathrm{K}(A_n,\,(V_n\,,L_n\,,j_n))\,\text{.}$$
\end{lemma}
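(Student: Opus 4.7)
The plan is to apply Theorem~\ref{comp} directly to the Shafarevich DG algebra $R := \sh(A,(V,L,j)) = T_k(V \oplus L[1])$, which is manifestly semi-free over $k$ with homogeneous generators given by a basis $\{v^\alpha\}$ of $V$ (placed in homological degree $0$) together with a basis $\{l^\beta\}$ of $L$ (lifted to homological degree $1$). Its differential is determined by $d_R(v^\alpha) = 0$ and $d_R(l^\beta) = j(l^\beta) \in T_k V$. Theorem~\ref{comp} then identifies $\rtv{R}$ with the free graded algebra $\tilde R$ on matrix-valued generators $\{v^\alpha_{ij}\} \cup \{l^\beta_{ij}\}$ ($1 \le i,j \le n$), equipped with the differential $\tilde d$ specified on generators by $\tilde d(x^\gamma_{ij}) = \|\pi_n(d_R x^\gamma)\|_{ij}$ and extended by the Leibniz rule.

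Passing to the commutativization $R_n = \tilde R_{\nn}$, one obtains the graded polynomial algebra $\bSym(V_n \oplus L_n[1])$. Since the commutator ideal is $\tilde d$-stable, the differential $\tilde d$ descends to $R_n$, and, being a graded derivation, it is completely determined by its values on the generators: for $v^\alpha$ one has $d(v^\alpha_{ij}) = 0$, while for $l^\beta$ one has
$$
d(l^\beta_{ij}) \,=\, (\pi_n(j(l^\beta)))_{ij} \;\,\bmod\text{ commutators} \,=\, j_n(l^\beta_{ij})\ ,
$$
which is exactly the defining formula of the Koszul differential on $\mathrm{K}(A_n,(V_n,L_n,j_n))$. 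This matches the two DG algebras term-by-term and differential-by-differential, yielding the desired isomorphism.

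The only preliminary point requiring justification is that the Koszul complex on the right-hand side is well-defined, i.e.\ that $A_n = \bSym(V_n)/\langle j_n(L_n)\rangle$, so that $j_n$ has target in which $A_n$ naturally appears as a quotient. This is a direct consequence of Proposition~\ref{S2P1}$(b)$: the functor $(\,\mbox{--}\,)_n$ is a left adjoint and therefore preserves all colimits, in particular quotients by two-sided ideals. Applied to the presentation $T_k V \twoheadrightarrow A$ with kernel $\langle j(L)\rangle$, and combined with the identification $(T_k V)_n \cong \bSym(V_n)$ obtained by applying Theorem~\ref{comp} to $T_k V$ itself, this delivers $A_n \cong \bSym(V_n)/\langle j_n(L_n)\rangle$. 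I anticipate no serious obstacle in this proof: the principal content is recognizing that the matrix-reduction functor $\rtv{(\mbox{--})}$ converts the ``noncommutative Koszul-type'' construction of Shafarevich into its honest commutative-algebra analogue, which is exactly what Theorem~\ref{comp} is engineered to do.
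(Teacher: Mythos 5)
Your proof is correct and follows exactly the route the paper intends: the lemma is stated in the text as ``an immediate consequence of Theorem~\ref{comp},'' and your write-up simply unpacks that implication, noting that the Shafarevich complex $\sh(A,(V,L,j))$ is semi-free on $V \oplus L[1]$ with $d$ vanishing on $V$ and $d|_{L[1]}=j$, so Theorem~\ref{comp} identifies $R_n$ with the polynomial algebra $\bSym(V_n\oplus L_n[1])$ carrying precisely the Koszul differential on the degree-$1$ generators. Your auxiliary observation that $A_n \cong \bSym(V_n)/\langle j_n(L_n)\rangle$ — obtained by expressing $A$ as the pushout $T_kV\amalg_{T_kL} k$ and using that $(\,\mbox{--}\,)_n$ is a left adjoint (Proposition~\ref{S2P1}$(b)$) and hence preserves colimits, while $(T_kV)_n\cong\bSym(V_n)$ by Theorem~\ref{comp} applied to $T_kV$ — is exactly the right way to verify that the Koszul data on the right-hand side is well formed, and is implicit in the paper's treatment as well.
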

This lemma suggests that we should indeed view a Shafarevich complex as a noncommutative Koszul complex.
The next theorem shows that the representation homology of NCCI algebras is {\it rigid} in the sense of
Auslander-Buchsbaum (see \cite{AB}).
\begin{theorem}
\la{Rigid}
 If $A$ is a NCCI algebra, then $\mathrm{H}_q(A, k^n)=0$ implies that $\mathrm{H}_p(A, k^n)=0$ for all $\, p \geq q \,$.
\end{theorem}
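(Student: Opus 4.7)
The plan is to exploit the NCCI hypothesis to produce a small explicit commutative DG model for $\DRep_n(A)$, and then invoke the classical rigidity of Koszul homology. By the characterization of NCCI algebras cited above, we may choose presentation data $(V, L, j)$ such that the Shafarevich complex $R := \sh(A, (V, L, j)) = T_k(V \oplus L[1])$ is acyclic in positive homological degrees. Being a semi-free extension of $k$ in $\DGA_k^+$, the augmentation $R \sonto A$ is then a semi-free resolution, and by Theorem~\ref{comp} (see also Definition~\ref{S2D1}) the DG algebra $R_n$ is a model for $\DRep_n(A)$. Lemma~\ref{ShNCC} identifies $R_n$ with the classical commutative Koszul complex
$$
K \;:=\; \mathrm{K}(A_n, (V_n, L_n, j_n)) \;\cong\; \bSym(V_n) \otimes \Lambda(L_n),
$$
on the finite sequence of homogeneous elements $j_n(L_n) \subset \bSym(V_n)$, so that $\H_p(A, k^n) \cong \H_p(K)$ for all $p \geq 0$.

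The theorem then reduces to the standard \emph{depth sensitivity} of Koszul homology: for a Noetherian local ring $R'$ and a finite sequence $\underline{x} = (x_1, \ldots, x_m)$ in its maximal ideal, the indices $i$ at which $\H_i(K(\underline{x}; R'))$ is non-vanishing form a down-closed interval $\{0, 1, \ldots, m - \mathrm{grade}(I, R')\}$ with $I = (\underline{x})$; in particular, $\H_q = 0$ forces $\H_p = 0$ for all $p \geq q$ (see, e.g., Bruns--Herzog, Theorem~1.6.17). To transport this to the present graded non-local setting, localize $K$ at the irrelevant maximal ideal $\m \subset \bSym(V_n)$. Each $\H_p(K)$ is a finitely generated graded $\bSym(V_n)$-module concentrated in non-negative polynomial degrees, so by graded Nakayama one has $\H_p(K) = 0$ if and only if $\H_p(K_\m) = 0$; the local rigidity then transfers to $K$ and yields the desired conclusion.

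The main conceptual step is Lemma~\ref{ShNCC}, which converts the Shafarevich (noncommutative Koszul-type) resolution of $A$ into an honest commutative Koszul complex for the ordinary affine scheme $\Rep_n(A)$. Once that identification is in place the rest is classical commutative algebra; the only mild obstacle is the graded-to-local reduction, which is a routine Nakayama step.
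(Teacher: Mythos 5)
Your proof is correct and takes essentially the same route as the paper's: reduce to the Koszul complex via Lemma~\ref{ShNCC} (using the NCCI characterization to obtain a Shafarevich resolution) and then invoke the classical rigidity of Koszul homology. The paper cites Auslander--Buchsbaum, Proposition~2.6, for this last step and leaves the graded/local reduction implicit; you spell it out via graded Nakayama, which is a harmless elaboration of the same argument.
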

\begin{proof}
This follows from Lemma~\ref{ShNCC} and the rigidity of the usual Koszul complexes (see \cite{AB}, Proposition~2.6).
\end{proof}

The following theorem gives a natural interpretation for
the 1-st representation homology of NCCI algebras:
namely, $  \H_1(A, k^n) $ is an obstruction for the classical representation scheme $ \Rep_n(A) $ to be a complete interesection.
\begin{theorem}
 \la{RepHom}
Let $A$ be a NCCI algebra.  Assume that $ \H_1(A, k^n)=0 $ for some $ n \ge 1 $.
Then $ \Rep_n(A) $ is a complete intersection.
\end{theorem}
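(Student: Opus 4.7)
The plan is to translate the statement into a standard commutative-algebra question about the Koszul complex of the coordinate ring $A_n := k[\Rep_n(A)]$. Since $A$ is NCCI, I would first choose presentation data $(V,L,j)$ whose Shafarevich complex $S := \sh(A,(V,L,j))$ is acyclic in positive degrees. Because the underlying graded algebra $S_\# = T_k(V \oplus L[1])$ is free, $S$ is semi-free, so the canonical surjection $S \sonto A$ is a semi-free (hence cofibrant) resolution of $A$ in $\DGA^+_k$. Invoking Theorem \ref{S2T2}(b) together with Definition \ref{S2D1}, this resolution computes the representation homology, and we obtain $\H_\bullet(A, k^n) \cong \H_\bullet(S_n)$.

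Next I would apply Lemma \ref{ShNCC} to identify $S_n$ with the commutative Koszul complex $\mathrm{K}(A_n,(V_n,L_n,j_n))$ built on the elements $\pi_n(j(l))_{pq} \in \bSym(V_n)$ whose common vanishing cuts out $\Rep_n(A)$. At this point the rigidity result (Theorem \ref{Rigid}) upgrades the single hypothesis $\H_1(A, k^n) = 0$ to the full vanishing $\H_p(A, k^n) = 0$ for every $p \ge 1$. Consequently the Koszul complex $\mathrm{K}(A_n,(V_n,L_n,j_n))$ is acyclic in all positive degrees.

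To finish, I would invoke the classical criterion for regular sequences: over the polynomial ring $\bSym(V_n)$, a finite family of (homogeneous) elements forms a regular sequence if and only if the associated Koszul complex is acyclic in positive degrees. Applied to any basis of $j_n(L_n)$, this yields a regular sequence in $\bSym(V_n)$, so $A_n = \bSym(V_n)/\langle j_n(L_n)\rangle$ is a complete intersection; equivalently, $\Rep_n(A) = \Spec(A_n)$ is a complete intersection.

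No deep technical obstacle appears in this plan, since both nontrivial ingredients (the existence of a positively acyclic Shafarevich resolution for NCCI algebras, and rigidity of representation homology) are already furnished by the paper. The main point requiring care is simply that the chosen Shafarevich complex is genuinely a cofibrant resolution of $A$ in the model structure on $\DGA^+_k$, which follows immediately from its semi-freeness combined with the positive-degree acyclicity guaranteed by NCCI; everything else is a formal application of Lemma \ref{ShNCC}, Theorem \ref{Rigid}, and the classical Koszul criterion.
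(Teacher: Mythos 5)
Your proof is correct and follows essentially the same route as the paper: choose presentation data with positively acyclic Shafarevich complex, identify $S_n$ with the Koszul complex via Lemma~\ref{ShNCC}, upgrade $\H_1 = 0$ to vanishing in all positive degrees via rigidity (you cite Theorem~\ref{Rigid}; the paper cites the underlying result of Auslander--Buchsbaum directly), and conclude via the classical Koszul criterion. The only difference is that you route the rigidity step through the paper's own Theorem~\ref{Rigid} rather than its source, and you spell out the final ``acyclic Koszul complex $\Rightarrow$ regular sequence'' step, which the paper leaves implicit.
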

\begin{proof}
Suppose that $(V,L,j)$ is presentation data for $A$ making $\sh(A,(V,L,j))$ acyclic in positive degree. Then,
the Koszul complex $\mathrm{K}\,:=\,\mathrm{K}(A_n,(V_n,L_n,j_n))$ represents $\mathrm{DRep}_n(A)$ in $\Ho(\cDGA_k)$
(by Lemma~\ref{ShNCC}). Suppose that $\mathrm{H}_1(A,n)=0$. Then, by~\cite[Proposition 2.6]{AB}, the
Koszul complex $\mathrm{K}$ is acyclic in all positive degrees. Since $k[\mathrm{Rep}_n(A)] \,\cong\,\mathrm{H}_0(\mathrm{K})$,
$\mathrm{Rep}_n(A)$ is a complete intersection.
\end{proof}

Under extra (mild) assumptions, the vanishing of  $ \H_1(A, k^n) $ is not only sufficient but also necessary for  $ \Rep_n(A) $ to be a complete intersection. More precisely, we have
\begin{theorem}
\la{RepHom1}
Let $A$ be a NCCI algebra given with presentation data $(V,L,j)$  such that $\,\sh(A,(V,L,j))$ is acyclic in positive degrees.

$(a)$  If  $ \Rep_n(A)$ is a complete intersection in $\,\Rep_n(T_k\,V)\,$ of dimension $\,n^2(\mathrm{dim}_k\,V -\mathrm{dim}_k\,L)\,$, then
$$
\H_p (A, k^n)=0\ ,\quad \forall\, p > 0 \ .
$$

$(b)$  More generally, if  $\Rep_n(A)$ is a complete intersection in $ \Rep_n(T_k\,V)$, then
$\mathrm{H}_q(A, k^n)$ is a free module over $\mathrm{H}_0(A,k^n)$ of rank $\binom{p}{q}$, where $ p\,:=\,\dim\,\mathrm{Rep}_n(A)- n^2(\dim_k\, V-\dim_k\,L)$.
\end{theorem}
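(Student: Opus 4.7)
The plan is to exploit the Koszul-complex model of $\DRep_n(A)$ furnished by Lemma~\ref{ShNCC} and then apply classical commutative algebra. Since $\sh(A,(V,L,j))$ is acyclic in positive degrees, it is a semi-free resolution of $A$, so by Lemma~\ref{ShNCC}
$$\DRep_n(A) \;\simeq\; \mathrm{K}(A_n,(V_n,L_n,j_n)) \qquad\text{in }\Ho(\cDGA_k^+).$$
Concretely, with $R := \bSym(V_n) = k[\Rep_n(T_kV)]$ (a polynomial ring of dimension $N := n^2\dim_k V$), $B := A_n = k[\Rep_n(A)]$, and $F := (f_1,\dots,f_m)$ the sequence of $m := n^2\dim_k L$ relations $j_n(l_{pq})$, this model is the ordinary Koszul complex on $F$ in $R$, with $B = R/(F)$. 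Let $r := N - \dim\Rep_n(A)$ be the codimension of $(F)$, so that $p = m - r$.

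For part $(a)$, the hypothesis forces $r = m$, so the codimension of $(F)$ equals the length of $F$. Since $R$ is Cohen-Macaulay, $F$ must then be a regular sequence, so the Koszul complex is a free resolution of $B$ and $\H_q(A,k^n) = 0$ for all $q > 0$.

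For part $(b)$, since $B$ is a complete intersection, $(F)$ is minimally generated by $r$ homogeneous elements forming a regular sequence, hence $\dim_k (F)/R_+(F) = r$. As the images of the $f_i$ span this $r$-dimensional $k$-vector space, one can select $r$ of them --- say $g_1,\dots,g_r$ after reordering --- that are linearly independent modulo $R_+(F)$. By graded Nakayama they then generate $(F)$, and since $R$ is Cohen-Macaulay and $\mathrm{codim}(g_1,\dots,g_r) = r$, they form a regular sequence; the remaining elements $h_1,\dots,h_p$ lie automatically in $(g_1,\dots,g_r)$. Consequently
$$\mathrm{K}(A_n,(V_n,L_n,j_n)) \;\cong\; C \otimes_R D,$$
where $C := \mathrm{K}(R;g_1,\dots,g_r)$ is a free resolution of $B$ and $D := \mathrm{K}(R;h_1,\dots,h_p)$ is a bounded complex of free $R$-modules. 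Therefore
$$\H_\bullet(C\otimes_R D) \;\cong\; \H_\bullet(B \otimes_R^{\L} D) \;\cong\; \H_\bullet(B \otimes_R D) \;=\; \H_\bullet\bigl(\mathrm{K}(B;\bar h_1,\dots,\bar h_p)\bigr).$$
Because each $\bar h_j$ vanishes in $B$, the last Koszul complex has zero differential, so its $q$-th homology is $B \otimes_k \Lambda^q k^p \cong B^{\binom{p}{q}}$, as required.

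The only subtle point is verifying that a minimal generating set of $(F)$ can be extracted as a sub-sequence of $F$ itself (rather than only after an $R$-linear change of basis), so that the Koszul complex factorizes on the nose; this reduces to a standard graded-Nakayama manipulation, and once it is in hand, the rest of the argument is a routine tensor-factorization computation plus the observation that the induced generators in $B$ vanish.
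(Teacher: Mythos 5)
Your proof is correct and follows essentially the same route as the paper's: both pass to the Koszul model furnished by Lemma~\ref{ShNCC}, extract a sub-collection of the Koszul generators that minimally generates the ideal and is automatically a regular sequence (since the polynomial ring is Cohen--Macaulay and the ideal is a complete intersection of the right codimension), and then observe that the remaining generators die in $A_n$, so the residual Koszul factor has zero differential and contributes $\Lambda^{\bullet}k^p$. The only cosmetic difference is that the paper phrases the splitting as a choice of graded complement $L^{o}\oplus L^{\perp}$ of $L_n$ and invokes Eisenbud for the quasi-isomorphism replacing the second Koszul factor by one with zero differential, whereas you reorder the sequence and compute $\H_\bullet(B\otimes_R^{\L} D)$ directly; these are the same computation.
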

\begin{proof}
If $\mathrm{Rep}_n(A)$ is a complete intersection in $\mathrm{Rep}_n(T_k\,V)$ implies that the Koszul complex $\mathrm{K}$ is acyclic in positive degrees. Since
$\mathrm{H}_q(A,k^n)\,\cong\,\mathrm{H}_q(\mathrm{K})$, choose a homogenous basis of $L_n$ and choose a minimal set $S$ from this homogenous basis
such that its image under $j_n$ generates the ideal $I_n$ defining $\mathrm{Rep}_n(A)$ in $\mathrm{Rep}_n(T_k\,V)$. The $k$-linear span
of $S$ is a graded subspace $L^{o}$ of $L_n$, and $\dim_k \,\mathrm{Rep}_n(A)\,=\,n^2.\dim_k \,V - \dim_k \,L^{o}$. For any complement
$L^{\perp}$ of $L^o$ in $L$, $j(L^{\perp})$ is contained in the ideal $I_n$. It follows from~\cite{Ei} that $\mathrm{K}$ is quasi-isomorphic to
$\mathrm{K}(A_n,(V_n,L^o,j_n)) \otimes \bSym(L^{\perp}[1])$. Since $\mathrm{Rep}_n(A)$ is a complete intersection, $\mathrm{K}(A_n,(V_n,L^o,j_n))$
is acyclic in positive degrees and has $0$-th homology $k[\mathrm{Rep}_n(A)]$. Thus, $\mathrm{H}_q(A, k^n) \cong k[\mathrm{Rep}_n(A)] \otimes \Lambda^q L^{\perp}$ as
(polynomially graded) vector spaces. Finally, note that the number $p$ in the statement of (c) is precisely $\dim_k\,L^{\perp}$. This proves
$(b)$, of which $(a)$ is a special case.
\end{proof}

Let $A$ be a NCCI algebra with presentation data $(V,L,j)$ as in Theorem~\ref{RepHom1}. Set $R\,:=\,\sh(A,(V,L,j))$ and denote the
summand of polynomial degree $r$ in $R_p$ by $ R_p{r} $ (with square brackets being reserved for denoting shifts in homological degree).
Since $ R $ is acyclic in homological degrees $ p > 0 $, the map
$ j: L \to T_k V $ is injective (see \cite{Pi}, Theorem~2.4).
Consider the graded subspace $ L_0\,:=\,j^{-1}([T_k\,V,\,T_k\,V])$ of $L$. The embedding $L_0 \hookrightarrow R_1$ induces a linear map
$$ \phi\,:\, L_0 \rar \mathrm{H}_1(\bar{R}_{\natural}) \cong \overline{\HC}_1(A) \,\text{.}$$
 Consider the restriction of the
map $\mathrm{Tr}_n\,:\,R \rar R_n$ to $L_0$. Clearly, $\mathrm{Tr}_n\,|_{L_0}$ is injective. We may therefore,
identify $L_0$ with its image under $\mathrm{Tr}_n$ and choose a direct sum decomposition
$$L_n\,\cong\, L_0 \oplus L_0^{\perp} $$ as graded $k$-vector spaces. The following proposition now follows
from Lemma~\ref{ShNCC}.

\begin{prop} \la{RepNCC}
With above notation, there is an isomorphism of DG algebras
$$ R_n \,\cong\, \mathrm{K}(A_n, (V_n, L_0^{\perp}, j_n)) \otimes \bSym(L_0[1]) \,\text{.}$$
Consequently,
$$\mathrm{H}_{\bullet}(A, k^n) \,\cong\, \mathrm{H}_{\mathrm{Koszul},\bullet}(A_n, (V_n,L_0^{\perp}, j_n)) \otimes \bSym(L_0[1])\,\text{.}$$
\end{prop}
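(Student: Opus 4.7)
My plan is to combine Lemma~\ref{ShNCC} with the classical fact that matrix traces vanish on commutators. Since $R := \sh(A,(V,L,j))$ is a semi-free resolution of $A$ in $\DGA^+_k$, the commutative DG algebra $R_n$ represents $\DRep_n(A)$; by Lemma~\ref{ShNCC}, $R_n \cong \mathrm{K}(A_n,(V_n,L_n,j_n)) = \bSym(V_n \oplus L_n[1])$ with differential determined by $d(l) = j_n(l)$ for $l \in L_n$.

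The crucial step is to show that, after identifying $L_0$ with its image in $L_n$ under the trace map $\Tr_n$, the restriction of $j_n$ to $L_0$ vanishes. Unwinding the definition of $j_n$ from the matrix entries of the universal representation $\pi_n: T_k V \to \M_n((T_k V)_n)$, one computes that for any $l \in L$,
\[
j_n\bigl(\Tr_n(l)\bigr) \,=\, \sum_{i=1}^n j_n(l_{ii}) \,=\, \sum_{i=1}^n \bigl(\pi_n(j(l))\bigr)_{ii} \,=\, \Tr_n\bigl(j(l)\bigr),
\]
where the rightmost $\Tr_n$ denotes the usual matrix trace $T_k V \to (T_k V)_n$ induced by $\pi_n$. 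Because $(T_k V)_n$ is commutative, this trace annihilates commutators; hence $l \in L_0$, meaning $j(l) \in [T_k V, T_k V]$, forces $j_n(\Tr_n(l)) = 0$.

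With the chosen graded complement $L_0^{\perp}$ to $L_0$ in $L_n$, one obtains a tensor factorization of graded commutative algebras
\[
\bSym(V_n \oplus L_n[1]) \,\cong\, \bSym(V_n \oplus L_0^{\perp}[1]) \,\otimes\, \bSym(L_0[1])
\]
which, by the previous step, also respects differentials. On the left-hand factor the differential is still $l \mapsto j_n(l)$, and since $j_n(L_0^{\perp}) = j_n(L_n)$ generates the ideal cutting out $A_n$ inside $\bSym(V_n)$, this factor is precisely $\mathrm{K}(A_n,(V_n,L_0^{\perp},j_n))$; on the right-hand factor the differential is zero. This establishes the first isomorphism.

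The second assertion follows by the K\"unneth formula (applicable since $k$ is a field): the right-hand factor has trivial differential, so its homology equals $\bSym(L_0[1])$, while the homology of the left-hand factor is, by definition, the Koszul homology $\mathrm{H}_{\mathrm{Koszul},\bullet}(A_n,(V_n,L_0^{\perp},j_n))$. Combined with the identification $\H_\bullet(R_n) \cong \H_\bullet(A,k^n)$, which holds because $R$ is a semi-free (hence cofibrant) resolution of $A$ and so $R_n$ is a model for $\DRep_n(A)$, this yields the claimed decomposition of representation homology. The only delicate point is verifying the intertwining identity $j_n \circ \Tr_n = \Tr_n \circ j$ in the key step; once this is in hand, the rest is purely formal.
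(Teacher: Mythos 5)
Your proof is correct and correctly isolates the key mechanism behind the splitting: the identity $j_n \circ \Tr_n = \Tr_n \circ j$ together with the vanishing of matrix traces on commutators over the commutative ring $(T_k V)_n$ forces the differential to annihilate $\Tr_n(L_0)\subset L_n[1]$, so $\bSym(L_0[1])$ factors off with zero differential and, since $j_n(L_0^\perp)=j_n(L_n)$, the complementary factor is $\mathrm{K}(A_n,(V_n,L_0^\perp,j_n))$. This is exactly the (unstated) computation behind the paper's one-line assertion that the proposition follows from Lemma~\ref{ShNCC}; your argument is the same approach with the missing details supplied.
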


When the graded vector space $L$ is concentrated in a single degree and when $n>1$, one can further show (using the 2nd
Fundamental Theorem of Invariant Theory) that
the images of any basis of $L_0^{\perp}$ form a minimal generating set for the ideal defining $A_n$ in $(T_k\,V)_n$.
Hence, in this case, the Koszul homology $\mathrm{H}_{\mathrm{Koszul},\bullet}(A_n, (V_n,L_0^{\perp}, j_n))$ is literally the Koszul homology for the embedding $\mathrm{Rep}_n(A) \hookrightarrow \mathrm{Rep}_n(T_k\,V)$ of schemes.

\subsubsection{Derived commuting schemes} Let $\, A = k[x,y] \,$ be the polynomial algebra of two variables. For $ n \ge 1 $, the
representation scheme $ \Rep_n(A) $ is called the {\it $n$-th commuting scheme}. We write $ A_n = k[x,y]_n $ for the
corresponding commutative algebra.  It is not known whether $\Rep_n(A) $ is a reduced scheme in general but it is known
that the underlying variety is irreducible for all $ n \,$ (see \cite{Ger}). The following result is a consequence of a
deep theorem of A.~Knutson \cite{Kn}.

\begin{theorem}
\la{RepHompoly}
 $\H_p(A,k^n)\,=\,0\, $ for all $\,p>n$.
\end{theorem}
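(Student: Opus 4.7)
The plan is to deduce Theorem~\ref{RepHompoly} directly from Theorem~\ref{RepHom1}(b) combined with Knutson's theorem \cite{Kn}. First, present $A = k[x,y]$ as a graded NCCI with presentation data $V := kx \oplus ky$ (so $\dim_k V = 2$) and $L := k\cdot(xy - yx)$ (so $\dim_k L = 1$), with $j: L \hookrightarrow T_k V$ the obvious embedding. The associated Shafarevich complex $\sh(A,(V,L,j))$ is the small DG algebra $T_k\langle x,y,t\rangle$ with $|t|=1$ and $dt = xy-yx$; this is a classical Koszul-type model, acyclic in positive degrees, so $A$ is indeed NCCI in the sense of Section~6.2.

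By Lemma~\ref{ShNCC}, $\DRep_n(A)$ is computed by the Koszul complex $\mathrm{K}(A_n,(V_n,L_n,j_n))$, whose generators are the $n^2$ entries of the commutator matrix $[X,Y]$ inside the polynomial ring $k[X_{ij},Y_{ij}]$ of Krull dimension $2n^2$. By a classical theorem of Gerstenhaber, $\dim \Rep_n(A) = n^2 + n$, so the integer $p$ in Theorem~\ref{RepHom1}(b) specializes to
\[
p \;=\; \dim\Rep_n(A) - n^2(\dim_k V - \dim_k L) \;=\; (n^2+n) - n^2 \;=\; n.
\]
Consequently, \emph{provided} the hypothesis of Theorem~\ref{RepHom1}(b) is satisfied, we obtain the isomorphism of $\H_0(A,k^n)$-modules
\[
\H_q(A, k^n) \;\cong\; k[\Rep_n(A)]^{\binom{n}{q}},
\]
and the sought vanishing for $q>n$ falls out of the trivial identity $\binom{n}{q}=0$.

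It thus remains only to verify the hypothesis of Theorem~\ref{RepHom1}(b): that $\Rep_n(k[x,y])$ is a complete intersection inside $\Rep_n(k\langle x,y\rangle) = \mathbb{A}^{2n^2}$ in the scheme-theoretic sense used there. Equivalently, among the $n^2$ commutator entries $[X,Y]_{ij}$ one must be able to extract a regular sequence of length $n^2-n$ that generates the same ideal. This is exactly what Knutson proves in \cite{Kn}: the scheme-theoretic commuting variety is a (Cohen--Macaulay) complete intersection cut out by the commutator relations. Invoking this result concludes the proof.

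The only delicate point, and hence the main obstacle, is matching Knutson's theorem in \cite{Kn} to the precise complete-intersection condition required in Theorem~\ref{RepHom1}(b). Concretely, one must check that the $n^2 - n$ generators witnessing the complete intersection can be taken to lie in $j_n(L_n^{o})$ for some graded complement $L_n^{o}\subset L_n$ whose orthogonal $L_n^{\perp}$ has dimension $n$; then Proposition~\ref{RepNCC} (or rather the argument in the proof of Theorem~\ref{RepHom1}(b)) identifies the excess directions as an exterior algebra on an $n$-dimensional space and yields the binomial rank formula above. Given Knutson's theorem this is a routine linear-algebra bookkeeping, but it must be performed explicitly to close the argument.
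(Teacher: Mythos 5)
Your proof breaks on the central hypothesis: $\Rep_n(k[x,y])$ is \emph{not} a complete intersection in $\mathbb{A}^{2n^2}$ for $n\ge 2$, and Knutson's theorem does not assert that it is. The commuting scheme has codimension $n^2-n$, but its defining ideal requires $n^2-1$ minimal generators (the entries of $[X,Y]$ modulo the single trace relation); for $n\ge 2$ this exceeds $n^2-n$, so the commuting scheme cannot be a complete intersection. What \cite[Theorem~1]{Kn} actually proves is the weaker statement that the $n^2-n$ \emph{off-diagonal} entries $\{[X,Y]_{ij} : i\neq j\}$ form a regular sequence in $k[x_{ij},y_{ij}]$; the scheme they cut out is a complete intersection of dimension $n^2+n$, but it strictly contains the commuting scheme, which is obtained by further imposing the diagonal relations. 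Consequently Theorem~\ref{RepHom1}(b) is inapplicable here, and the ``routine bookkeeping'' you defer to at the end cannot be performed: there is no subset $L^o\subset L_n$ of dimension $n^2-n$ whose image under $j_n$ generates the full commuting ideal.

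The paper's proof uses Knutson's regular-sequence result in a way that avoids any complete-intersection claim about $\Rep_n(A)$. Having identified $R_n$ as the Koszul complex on the $n^2$ elements $dt_{ij}=[X,Y]_{ij}$ over $k[x_{ij},y_{ij}]$, the fact that $n^2-n$ of these form a regular sequence gives a lower bound $\operatorname{depth}(I_n)\ge n^2-n$ on the depth of the generated ideal, and then depth-sensitivity of the Koszul complex (\cite[Corollary~17.12]{Ei}) forces $\H_p(R_n)=0$ for $p > n^2 - (n^2-n) = n$. This is strictly weaker information about the geometry of $\Rep_n(A)$ (no structure theorem for $\H_q$ as a free $\H_0$-module with rank $\binom{n}{q}$ is obtained), but it is exactly what the vanishing statement requires. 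Your proof, if it could be salvaged, would prove something stronger than the theorem; the failure of the complete-intersection hypothesis is precisely why the stronger statement is not available.
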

\begin{proof}
The obvious presentation $ A =  k\langle x,y\rangle/(xy-yx) $
with natural polynomial grading ($ \deg(x)= \deg(y) = 1 $) shows that $A$ is actually a NCCI algebra ({\it cf.} \cite{Pi}, Proposition~2.20). Indeed, for $\,V := k.x \oplus k.y\,$, $\,L :=k.t$ (with $t$ in polynomial degree $2$) and $j(t)\,:=\,xy-yx$, the Shafarevich complex is isomorphic to
the DG algebra $ R\,:=\,k \langle x,y,t\,:\, dt=xy-yx \rangle $ which is acyclic in positive degrees. Thus, $R_n\,\cong\, k[x_{ij},y_{ij},t_{ij}\,|\,1 \leq i,j \leq n]$ with variables $t_{ij}$ in degree $1$ and differential determined by the formula
$$
dT\,=\,[X,Y]\ ,
$$
where $ X :=(x_{ij})\,$,$ \,Y:=(y_{ij})\,$, $\, T:=(t_{ij})\,\in \M_n(R_n)\,\text{.}$
By~\cite[Theorem 1]{Kn}, the $ (n^2-n) $ elements $\, \{dt_{ij}\,, 1 \leq i\neq j \leq n\} \,$ form a regular sequence in $\,k[x_{ij},y_{ij}]\,$.
It follows from~\cite[Corollary 17.12]{Ei} that $\H_p(R_n)\,=\,0$ for all $p>n$.
\end{proof}

\vspace{1ex}

\noindent
{\bf Example}\,($n=1$).
It is easy to see that
$$
\H_{\bullet}(k[x,y], k) \,\cong\, k[x,y] \otimes \bSym(k.t)
$$
where $t$ has degree $1$. Hence, $\,\H_{\bullet}(k[x,y], k)\cong k[x,y] \oplus k[x,y].t\,$ is a rank 2 free module over $ k[x,y]$.

\vspace{1ex}

This simple example shows that $ \DRep_V(A) $ does depend on the algebra $A$, and not only on the affine scheme $ \Rep_V(A)$.
Indeed, comparing $ k[x,y]$ to the free algebra $ k\langle x,y \rangle $, we see that $ \Rep_1(k[x,y]) = \Rep_1(k\langle x,y \rangle)$ but
$ \H_1(k[x,y], k) \not\cong \H_1(k\langle x,y \rangle, k) $ because
$ \H_1(k\langle x,y \rangle, k) = 0 $, by Theorem~\ref{abt1}.

\vspace{1ex}

\noindent
{\bf Example}\,($n=2$).
The algebra $\H_{\bullet}(k[x,y], k^2)$ is more complicated.  Let $\mathfrak{g}\,:=\, \mathrm{span}_k\{ \xi, \tau,\eta\}$,
where the variables $\,\xi,\,\tau,\,\eta\,$ are in homological degree $1$.  Then, there is an isomorphism of graded algebras
\begin{equation}
\la{reph2}
\mathrm{H}_{\bullet}(k[x,y], k^2)\,\cong\, (k[x,y]_2 \otimes \Lambda_k \mathfrak{g})/\mathbf{I}
\end{equation}
where the ideal $\mathbf{I}$ is generated by the following relations\\

(1)\quad $x_{12}\,\eta - y_{12}\,\xi \,=\, (x_{12}y_{11}-y_{12}x_{11})\,\tau$\\

(2)\quad $x_{21}\,\eta - y_{21}\,\xi \,=\, (x_{21}y_{22}-y_{21}x_{22})\,\tau$\\

(3)\quad $(x_{11}-x_{22})\,\eta- (y_{11}-y_{22})\,\xi \,=\, (x_{11}y_{22}- y_{11}x_{22})\,\tau$\\

(4)\quad $\xi \eta\,=\, y_{11}\,(\xi\tau) - x_{11}\,(\eta\tau) \, =\, y_{22}\,(\xi\tau)-x_{22}\,(\eta\tau) $\\

\noindent
Thus, as a $\mathrm{H}_0$-module, $\mathrm{H}_{\bullet}(k[x,y],k^2)\,\cong\,H_0 \oplus H_1 \oplus H_2\,$, where
\begin{eqnarray*}
H_0 &\cong&  k[x,y]_2\\*[2ex]
H_1 &\cong& (H_0 \cdot \xi \oplus H_0 \cdot \tau \oplus H_0 \cdot \eta)/(\text{relations (1)-(3)})\\*[2ex]
H_2 \,&\cong&\, (H_0 \cdot \xi\tau \oplus H_0 \cdot \eta\tau )/(x_{12}\eta\tau-y_{12}\xi\tau\,,\,x_{21}\eta\tau-y_{21}\xi\tau\,,\, (x_{11}-x_{22})\eta - (y_{11}-y_{22})\xi)
\end{eqnarray*}
The above presentation of $\H_{\bullet}(k[x,y], k^2)$  was obtained with an assistance of {\tt Macaulay2}.

\vspace{1ex}

Recall that for $ A = k[x,y] $, the cyclic homology ${\rHC}_i(A) = 0 $ for $i>1$, while $ \HC_0(A)=A $ and
$\overline{\HC}_1(A)= \Omega^1A/dA $. With these identifications, for all $\, n\, $, the $0$-th trace
$ \mathrm{Tr}(A)_0 : \, \HC_0(A) \rar \mathrm{H}_0(A,k^n)$ is obviously given by the formula
$\, x^l y^m \mapsto \mathrm{Tr}(X^l Y^m)$, while the $1$-st trace
$\,\mathrm{Tr}(A)_1\,:\,\overline{\HC}_1(A) \rar \H_1(A, k^n) $ is expressed by (see \cite[Example 4.1]{BKR}):
$$
\mathrm{Tr}_1(x^l y^m dx)\,=\, \sum_{i=0}^{m-1} \, \mathrm{Tr}\,(X^l Y^{i}T Y^{m-1-i})\ , \qquad
\mathrm{Tr}_1(x^l y^m dy) \,=\, - \,\sum_{j=0}^{l-1}\,\Tr(X^j T X^{l-1-j} Y^m)\ .
$$
Now, for $ n = 2 $, the  generators $ \xi $, $\eta$ and $\tau $ in \eqref{reph2} correspond  to the classes of the elements
$\, \Tr(XT) $, $\, \Tr(YT) $ and $\,\Tr(T)$. Thus, we see that
$$
\tau = \mathrm{Tr}_1(ydx) \ ,\quad  \xi\,=\, \mathrm{Tr}_1(xydx)\ , \quad  \eta\,=\,\mathrm{Tr}_1(xydy)\ .
$$
It follows that  $ \H_{\bullet}(k[x,y], k^2) $ is generated (as an algebra over $\mathrm{H}_0$) by invariant traces of degree $1$.

\vspace{1ex}

\subsubsection{Derived $q$-commuting schemes}
\la{dqcom}
For a parameter $ q \in k^* $, define  $ A\,:=\, k \langle x, y\rangle/\langle xy-qyx \rangle $. By~\cite[Proposition 2.20]{Pi},
this algebra is a NCCI whose Shafarevich resolution is given by $  k\langle x,y,t\,|\,dt = xy-qyx\rangle $ . In the case when
$q$ is {\it not} a root of unity,  \cite[Proposition 5.3.1]{EG} shows that $ \mathrm{Rep}_n(A) $ is a complete intersection of
dimension $ n^2 $ for all $ n \ge 1 $. By Theorem~\ref{RepHom1}, we conclude that $ \H_p(A, k^n ) = 0 $ for all $ p > 0 $,
{\it i.e.} $A$ is a representation cofibrant algebra. However, by~\cite[Theorem 5.3]{Di}, the global dimension of $ A $ is equal to $2$.
Hence, $A$ is not formally smooth.

More generally,  for parameters $\, q_1,\,q_2,\,\ldots\,, q_{m-1} \in k^* $, we can define ({\it cf.} \cite[Example 5.3.3]{EG})
$$
A = k \langle x_1,\ldots,x_m \rangle/\langle \mathrm{ad}_{q_1}(x_1)\, \ldots \, \mathrm{ad}_{q_{m-1}}(x_{m-1})x_m \rangle
$$
where $ \mathrm{ad}_q(x)(y)\,:=\, xy-qyx$. If all $ q_i$'s are not roots of unity, then $ \mathrm{Rep}_n(A) $ is
a complete intersection of dimension $n^2(m-1)$. Again, Theorem~\ref{RepHom1} implies the vanishing of the
higher representation homology of $A$, while \cite[Theorem 5.3]{Di} shows that $A$ is not formally smooth.

\subsection{Koszul algebras}
\la{dualnm}
For any Koszul algebra $A$ with quadratic linear relations, there is a canonical semifree
resolution given in terms of the cobar construction of the dual
coalgebra $ (A^{!})^*$ (see \cite[Chapter 3]{LV}).  We illustrate the use of this resolution in three examples.

\subsubsection{Dual numbers}
Let  $ A\,:=\,k[x]/(x^2) $ be the ring of dual numbers. This is a quadratic algebra which is Koszul dual to the tensor algebra $ T_k V$
of a one-dimensional vector space $ V $. It has a minimal semi-free resolution of the form $\, R\,:=\, k\langle x, t_1, t_2, t_3, \ldots \rangle\,$
where $ \deg(x)=0 $  and $ \deg(t_p) = p $, and the differential is given by
$$
d t_p\,=\, xt_{p-1}-t_1t_{p-2} + \ldots +{(-1)}^{p-1}t_{p-1}x \ ,\quad p \ge 1 \ .
$$
By Theorem~\ref{comp}, $ \mathrm{H}_{\bullet}(A, k^n) $  is then the homology of the commutative DG algebra
$$
R_n\,:=\, k [x_{ij}, (t_1)_{ij},  (t_2)_{ij}, \, \ldots \ |\ 1 \leq i,j \leq n] \ ,
$$
whose  differential  in the matrix notation is given by
$$
dT_p \,=\, XT_{p-1}-T_1T_{p-2}+ \ldots +{(-1)}^{p-1}T_{p-1}X \ .
$$
For $ n  = 1 $,  using  {\tt Macaulay2}, we find  \\

$\qquad \mathrm{H}_0(A,k)\,\cong\,A$\\

$\qquad \mathrm{H}_1(A,k)\,=\, 0$\\

$\qquad \mathrm{H}_2(A,k)\,\cong\, A \cdot t_2$\\

$\qquad \mathrm{H}_3(A,k) \,\cong\, A \cdot (xt_3-2t_1t_2)$\\

$\qquad \mathrm{H}_4(A,k)\,\cong\, A \cdot t_2^2\,\oplus\, A \cdot t_4$\\

$\qquad \mathrm{H}_5(A,k)\,\cong\, A \cdot (-2t_1t_2^2+xt_2t_3) \,\oplus \,A \cdot (-t_2t_3-4t_1t_4+ 2xt_5)$\\

$\qquad \mathrm{H}_6(A,k)\,\cong\, A \cdot t_2t_4\, \oplus \,A \cdot t_6$\\

$\qquad \mathrm{H}_7(A,k)\,\cong\, A \cdot (-t_2^2t_3-4t_1t_2t_4+2xt_2t_5) \,\oplus \, A \cdot (-t_3t_4-2t_1t_6+xt_7)\,$\\

$\qquad \mathrm{H}_8(A,k)\,\cong\, A \cdot t_2 t_6 \oplus A \cdot t_4^2 \oplus A \cdot
t_8 \,$\\

$\qquad  .\ .\ .\ .\ .\ .\ .\ .\ .\ .\ .\ .\ .\ .\ .\ .\ .\ .\ .\ .\ .\ .\ .\ .\ .\ .\   $\\

\noindent
The (reduced) cyclic homology of $ A $ is given by (see, e.g., \cite{LQ}, Section~4.3):
\begin{equation*}
\rHC_n(A) =
\left\{
\begin{array}{lll}
0 \ & \mbox{\rm if} &\ n= 2p+1 \\*[1ex]
k. x^{\otimes (2p+1)}\ & \mbox{\rm if} &\ n = 2p
\end{array}
\right.
\end{equation*}
The odd traces $ \Tr(A)_{2p+1} $ thus vanish, while the even ones are given by
$$
\Tr(A)_{2p}\,:\,\overline{\HC}_{2p}(A) \rar \H_{2p}(A, k)\ ,\quad x^{\otimes (2p+1)} \mapsto t_{2p}\ .
$$

This example shows that the algebra map
$\,
\bL \Tr(A)_\bullet :\ \bL[\HC(A)] \to \H_{\bullet}(A, V)^{\GL(V)}
$
is not surjective  in general, {\it i.e.} the Procesi Theorem \cite{P} fails for higher traces ({\it cf.} Section~\ref{stabi}).
Note also that, unlike in the case of NCCI algebras, the representation homology of $ A $ is not rigid in the sense that
$ H_1 = 0 $ does not force the vanishing of higher homology.

\subsubsection{Polynomials in three variables}
Let $ A = k[x,y,z] $ be the polynomial ring in three variables. It has a minimal Koszul resolution of the form
$ R = k \langle x,y,z; \xi, \theta, \lambda; t\rangle $, where the generators $ x, y, z $ have degree $0$;
$ \xi, \theta, \lambda $ have degree $1$ and $t$ has degree $2$. The differential on $R $ is defined by
$$
d\xi = [y,z] \ ,\quad d\theta = [z,x] \ ,\quad d\lambda = [x,y] \ ,\quad dt = [x, \xi] + [y, \theta] + [z, \lambda]\ .
$$
For $ V = k^n $, Theorem~\ref{comp}  implies that
$$
R_n \cong k[x_{ij},\,y_{ij},\,z_{ij};\, \xi_{ij},\, \theta_{ij},\, \lambda_{ij};\,t_{ij}]\ ,
$$
where the generators $ x_{ij} \,$,$\, y_{ij} \,$,$ \,z_{ij} \,$ have degree zero,
$\,\xi_{ij} \,$, $\,\theta_{ij}\, $, $\,\lambda_{ij} \,$ have degree $ 1$,
and $\, t_{ij} $ have degree $ 2 $. Using the matrix notation $ X = \|x_{ij}\| \,$,
$\, Y = \| y_{ij}\|\,$, etc., we can write the differential on $\,R_n\,$ in the form
\begin{equation*}
d\Xi = [Y,Z] \ ,\quad d\Theta = [Z,X] \ ,\quad d\Lambda = [X,Y]\ ,\quad
dT = [X, \Xi] + [Y, \Theta] + [Z, \Lambda]\ .
\end{equation*}
For $ n=1 $, it is easy to see that the homology of $ R_n $ is just a graded symmetric algebra
generated by the classes of $ x,y,z, \xi, \theta, \lambda, t \,$. Thus,
$$
\H_\bullet(A, k) \cong \bSym(x, y, z, \xi, \theta, \lambda, t)
$$
This example shows that, unlike in the case of two variables, the representation homology of the polynomial algebra
$ k[x,y,z] $ is not bounded.

\subsubsection{Universal enveloping algebras}
Let $ A = U(\mathfrak{sl}_{2}) $ be the universal enveloping algebra of the Lie algebra $ \mathfrak{sl}_{2}(k) $.
As in previous example, $A$ has a minimal resolution of the form $ R = k \langle x,y,z; \xi, \theta, \lambda; t\rangle $
with generators $ x, y, z $ of degree $0$; $ \xi, \theta, \lambda $ of degree $1$ and $t$ of degree $2$. The differential on $R $
is defined by
$$
d\xi = [y,z] + x\ ,\quad d\theta = [z,x] + y\ ,\quad d\lambda = [x,y] + z\ ,\quad dt = [x, \xi] + [y, \theta] + [z, \lambda]\ .
$$
For $ V = k^n $, the corresponding algebra $ R_n $ has differential
\begin{equation*}
d\Xi = [Y,Z] + X\ ,\quad d\Theta = [Z,X] + Y\ ,\quad d\Lambda = [X,Y] + Z\ ,\quad
dT = [X, \Xi] + [Y, \Theta] + [Z, \Lambda]\ .
\end{equation*}
For $ n=1 $,  it is easy to see that the homology of $ R_n $ is just the polynomial algebra
generated by one variable $ t $ of degree $2$. Hence $\, \H_\bullet(A,k) \cong k[t] \,$.

\end{document}